\title[Algorithms yield upper bounds in differential algebra]{Algorithms yield upper bounds\\ in differential algebra}
\definecolor{e-mail}{rgb}{0,.40,.80}
\definecolor{reference}{rgb}{.20,.60,.22}
\definecolor{citation}{rgb}{0,.40,.80}
\newtheorem{lemma}{Lemma}[section]
\newtheorem{proposition}[lemma]{Proposition}
\newtheorem{theorem}[lemma]{Theorem}
\newtheorem{corollary}[lemma]{Corollary}
\theoremstyle{definition}
\newtheorem{definition}[lemma]{Definition}
\newtheorem*{claim}{Claim}
\newtheorem{notation}[lemma]{Notation}
\newtheorem{remark}[lemma]{Remark}
\DeclareMathOperator{\trdeg}{trdeg}
\DeclareMathOperator{\Ker}{Ker}
\DeclareMathOperator{\RK}{RK}
\DeclareMathOperator{\rank}{rank}
\DeclareMathOperator{\ord}{ord}
\DeclareMathOperator{\id}{id}
\DeclareMathOperator{\Iter}{Iter}
\DeclareMathOperator{\Len}{Len}
\DeclareMathOperator{\Components}{Components}
\DeclareMathOperator{\KolchinProj}{KolchinProj}
\DeclareMathOperator{\pol}{Pol}
\DeclareMathOperator{\poldim}{PolDim}
\DeclareMathOperator{\size}{Steps}
\DeclareMathOperator{\DCF}{DCF}
\DeclareMathOperator{\RCF}{RCF}
\DeclareMathOperator{\components}{Components}
\DeclareMathOperator{\Char}{char}
  \DeclareMathOperator{\kol}{Kol}
\date{}
\DeclareMathOperator{\lead}{lead}
\DeclareMathOperator{\init}{in}
\author{Wei Li}
\address{KLMM,
   Academy of Mathematics and Systems Science,
   Chinese Academy of Sciences,
    No.55 Zhongguancun East Road, 100190, Beijing, China}
    \email{liwei@mmrc.iss.ac.cn}
\author{Alexey Ovchinnikov}
\address{CUNY Queens College, Department of Mathematics,
65-30 Kissena Blvd, Queens, NY 11367, USA and
CUNY Graduate Center, Mathematics and Computer Science, 365 Fifth Avenue,
New York, NY 10016, USA}
\email{aovchinnikov@qc.cuny.edu}
\author{Gleb Pogudin}
\address{LIX, CNRS, \'Ecole Polytechnique, 
Institut Polytechnique de Paris, 1 rue Honor\'e d'Estienne d'Orves, 91120, Palaiseau, France and
National Research University Higher School of Economics, Moscow, Russia}
\email{gleb.pogudin@polytechnique.edu}
\author{Thomas Scanlon}
\address{University of California, Berkeley, Department of Mathematics, Berkeley, CA 94720-3840, USA}
\email{scanlon@math.berkeley.edu}
\begin{document}
\begin{abstract} 
Consider an algorithm computing in a differential field with several commuting derivations such that the only operations it performs with the elements of the field are arithmetic operations, differentiation, and zero testing.
We show that, if the algorithm is guaranteed to terminate on every input, then there is a computable upper bound for the size of the output of the algorithm in terms of the  size of the input.
We also generalize this to algorithms working with models of good enough theories (including for example, difference fields).

We then apply this to differential algebraic geometry to show that there exists a computable uniform upper bound for the number of components of any variety defined by a system of polynomial PDEs.
We then use this bound to show the existence of a computable uniform upper bound for the elimination problem in systems of polynomial PDEs with delays. 
\end{abstract}
\subjclass[2010]{Primary 12H05, 12H10, 03C10; Secondary 03C60, 03D15}
\maketitle

\section{Introduction}
Finding uniform bounds for  problems and quantities (e.g., consistency testing or counting of solutions) is one of the central questions in  differential algebra.
In~\cite{vdDS84}, it was demonstrated that, in commutative algebra, one can show the existence of such bounds as a consequence of theorems about nonstandard extensions of standard algebraic objects.
This approach was successfully applied in the differential algebra context in~\cite{HTKM12} and~\cite[Section~6]{GKOS09} for establishing, for example, the existence of a uniform bound in the differential Nullstellensatz.
Furthermore, in~\cite{ST19}, the authors used methods of proof theory to extract explicit bounds based on nonstandard existence proofs.

The present paper can be viewed as an alternative approach, in which we derive the existence of a computable uniform bound for an object from the existence of an algorithm for computing the object.
More precisely, let $T$ be a 
complete  decidable theory.
The most relevant examples for us would be the theory of differentially closed 
fields in zero characteristic with $m$ commuting derivations and the theory of existentially closed difference fields, others include algebraically closed and real closed fields.
Consider an algorithm $A$ performing computations in a model of $T$ that is restricted to using only definable functions when working with elements of the model (for formal definition, we refer to Section~\ref{subsec:gen_algo}) and required to terminate for every input.

We show that there is a computable upper bound for the size of the output of $A$ in terms of the input size of $A$.
We apply this to the Rosenfeld-Gr\"obner algorithm~\cite{BLOP09}
that decomposes a solution set of a system of polynomial PDEs into components and
is such an algorithm.
This allows us to show that there is a uniform upper bound for the number of components of any differential-algebraic variety defined by a system of polynomial PDEs.
We also show how this bound for the number of components leads to a uniform upper bound for the elimination problem in systems of polynomial PDEs with delays. 

A bound for the number of components of varieties defined by polynomial ODEs appeared in \cite{ordinary_case}, as did a bound for the elimination problem for polynomial ODEs with delays. 
These bounds are based on the application of the Rosenfeld-Gr\"obner algorithm, which, if applied in this situation to ODEs, outputs equations whose order does not exceed the order of the input.
This allowed to restrict to a finitely generated subring of the ring of differential polynomials and use tools from algebraic geometry.
It is non-trivial to generalize this to polynomial PDEs because the orders in the output of the Rosenfeld-Gr\"obner can be  greater than the orders of the input. 
Another key ingredient in the ODE case to obtain the bound in~\cite{ordinary_case} was an analysis of differential dimension polynomials.
A significant difference of our present PDE context with the ordinary case that these polynomials behave less predictably under projections of varieties (compare~\cite[Lemma 6.16]{ordinary_case} and Lemma~\ref{lem:Kolchin_poly_projection}). 
To overcome this difficulty, we use again our bound for the Rosenfeld-Gr\"obner algorithm.

We believe that our 
method
can also be applied to obtain bounds for other algorithms in differential algebra such as~\cite[Algorithm~3.6]{thomas} and for algorithms from other theories, e.g. \cite[Algorithm~3]{GR2019} for systems of difference equations.
Since the reducibility of a polynomial can be expressed as a first-order existential formula, it seems plausible that the same methods could be applied to other algorithms dealing with difference~\cite{GLY09} and differential-difference~\cite{GHYZ2009} equations that use factorization because the corresponding theories satisfy the requirements of our approach~\cite{LS2016,MLS2016,MS2014}.
However, we leave these for future research.

The paper is organized as follows. 
Section~\ref{sec:notation} contains definitions and notation used in Section~\ref{sec:main_results} to state the main results.
Bounds for an algorithm working with a model of a theory $T$ are established in Section~\ref{sec:bound_theories}.
These results are applied to differential algebra in Section~\ref{sec:diffalg}.
Further applications to delay PDEs are given in Section~\ref{sec:delayPDE}.

%%%%%%%
%%%%%%%%%%%%%%%%%%%%%%%%%%%%%%%%%%%%%%%%%%%%%%%%%%
\section{Basic notions and notaiton}\label{sec:notation}

\begin{definition}[Differential-difference rings]
\begin{itemize}
\item[]
    \item 
 A $\Delta$-$\sigma$-ring $(\mathcal R, \Delta, \sigma)$ is a commutative ring $\mathcal R$ endowed with a finite set  $\Delta=\{\partial_1,\ldots,\partial_m\}$ of commuting derivations of $\mathcal{R}$ and 
 an endomorphism $\sigma$ of $\mathcal{R}$ such that, for all $i$, $\partial_i\sigma=\sigma\partial_i$. 
 \item
 When $\mathcal R$ is additionally a field, it is called a $\Delta$-$\sigma$-field. 
 \item
 If $\sigma$ is an automorphism of $\mathcal R$, $\mathcal R$ is  called a $\Delta$-$\sigma^\ast$-ring.
 \item 
If $\sigma = \operatorname{id}$, $\mathcal{R}$ is called a $\Delta$-ring or differential ring.
 \item For a commutative ring $\mathcal{R}$, $\langle F\rangle$ denotes the ideal generated by $F \subset \mathcal{R}$ in $\mathcal{R}$.
\item For $\Delta=\{\partial_1,\ldots,\partial_m\}$, let $\Theta_\Delta=\{\partial_1^{i_1}\cdot\ldots\cdot\partial_m^{i_m}\mid i_j\geqslant 0,\, 1\leqslant j\leqslant m\}$.
\item For $\theta = \partial_1^{i_1}\cdot\ldots\cdot\partial_m^{i_m}\in\Theta_\Delta$, we let $\ord \theta = i_1+\ldots+i_m$.   For a non-negative integer $B$, we denote $\Theta_\Delta(B):=\{\theta\in\Theta_{\Delta}\mid\, \ord\theta\leqslant B\}$.
 \item For a $\Delta$-ring $\mathcal{R}$, 
 the differential ideal generated by $F\subset \mathcal{R}$ in $\mathcal{R}$ is denoted by 
 $\langle F\rangle^{(\infty)}$; for a non-negative integer $B$,  we introduce the following ideal of $\mathcal{R}$:   \[\langle F\rangle^{(B)} := \langle \theta(F)\mid \,\theta\in\Theta_\Delta(B)\rangle.\]
 \end{itemize}
 \end{definition}

   \begin{definition}[Differential polynomials]
 Let $\mathcal{R}$ be a $\Delta$-ring.  
 The differential polynomial ring over $\mathcal{R}$ in $\bm{y}=y_1,\ldots,y_n$ is defined as
 \[
  \mathcal{R}\{\bm{y}\}_\Delta :=\mathcal{R} [\theta y_s\mid \theta\in\Theta_\Delta;\, 1\leqslant s\leqslant n].
 \]
 The structure of a $\Delta$-ring is defined by $\partial_i(\theta y_s):=(\partial_i\theta)y_s$ for every $\theta \in \Theta_\Delta$.
\end{definition}

  \begin{definition}[Differential-difference polynomials]
 Let $\mathcal{R}$ be a $\Delta$-$\sigma$-ring.  
 The differential-difference polynomial ring over $\mathcal{R}$ in $\bm{y}=y_1,\ldots,y_n$
is defined as
 \[
 \mathcal{R}[\bm{y}_\infty] := \mathcal{R}[\theta\sigma^iy_s\mid \theta \in \Theta_\Delta;\,i\geqslant0;\,1\leqslant s\leqslant n].
 \]
 The structure of $\Delta$-$\sigma$ ring is defined by $\sigma(\theta\sigma^jy_s):=\theta\sigma^{j+1}y_s$ and $\partial_i(\theta\sigma^jy_s):=(\partial_i\theta)\sigma^jy_s$ for every $\theta \in \Theta_\Delta$ and $j \geqslant 0$.

 A $\Delta$-$\sigma$-polynomial is an element of $\mathcal{R}[\bm{y}_\infty]$. 
 Given $B\in\mathbb N$, let $\mathcal{R}[\bm{y}_B]$ denote the polynomial ring \[\mathcal{R}[\theta\sigma^jy_s\mid \theta\in\Theta_\Delta(B); 0\leqslant j\leqslant  B; 1\leqslant s\leqslant n].\]  
\end{definition}

The notions from logic that we use are described in 
detail in~\cite{Marker}.  In particular, we will use the notions of
a first-order language~\cite[Definition 1.1.1]{Marker}, 
structure~\cite[Definition 1.1.2]{Marker}, 
formula~\cite[Definition 1.1.5]{Marker},
theory~\cite[Section 1.2, page 14]{Marker}, 
model~\cite[Section 1.2, page 14]{Marker}, 
compactness~\cite[Section 2.1]{Marker},
complete theory~\cite[Definition 2.2.1]{Marker}, 
decidable theory~\cite[Definition 2.2.7]{Marker}, 
quantifier elimination~\cite[Definition 3.1.1]{Marker},
and $\aleph_0$-saturation~\cite[Definition 4.3.1]{Marker}.

\section{Main results}\label{sec:main_results}

For clarity, we gather our main results in one section.

\begin{theorem}[Upper bound for  irreducible components for PDEs] There exists a computable function $\components(m, n)$ such that, for every
differential field $k$  of zero characteristic with a set of $m$ commuting derivations $\Delta$ and finite $F\subset k\{y_1,\ldots,y_n\}_{\Delta}$ with $\max\{\ord F,\deg F\}\leqslant s$,
 the number of components in the variety defined by $F = 0$ does not exceed $\components(m, \max\{n,s\})$.
\end{theorem}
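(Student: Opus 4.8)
The plan is to express the number of irreducible components of $V(F)$ as a quantity bounded by the size of the output of an algorithm of the restricted kind studied in Section~\ref{sec:bound_theories}, and then to invoke the general bound proved there. First I would normalize the input. A differential polynomial in $y_1,\ldots,y_n$ of order $\leqslant s$ and degree $\leqslant s$ is an ordinary polynomial of degree $\leqslant s$ in the $n\binom{s+m}{m}$ variables $\theta y_j$ with $\ord\theta\leqslant s$, so such polynomials span a $k$-vector space of dimension computable from $m,n,s$. Replacing $F$ by a $k$-basis of its span in that space changes neither $\langle F\rangle^{(\infty)}$, nor $\sqrt{\langle F\rangle^{(\infty)}}$, nor $V(F)$, so we may also assume that $|F|$ is bounded in terms of $m,n,s$; hence the entire input has size bounded computably in terms of $m$ and $\max\{n,s\}$.

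Next I would run the Rosenfeld--Gr\"obner algorithm~\cite{BLOP09} on $F$. It returns regular differential systems, with characteristic sets $C_1,\ldots,C_r$, such that $\sqrt{\langle F\rangle^{(\infty)}}=\bigcap_{i=1}^{r}\big(\langle C_i\rangle^{(\infty)}:H_{C_i}^\infty\big)$. The key point is that this algorithm is \emph{factorization-free}: every operation it performs on elements of $k$ is a field operation, an application of one of the derivations $\partial_j$, or a zero test. Thus it falls under the general theorem of Section~\ref{sec:bound_theories}, which then bounds the size of its output --- in particular the number $r$, and the orders and degrees of the $C_i$ --- by a computable function $N = N\big(m,\max\{n,s\}\big)$ of the (now bounded) input size.

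It remains to pass from $r$ to the true number of irreducible components. If $\mathfrak p$ is a minimal prime over $\langle F\rangle^{(\infty)}$, then, since $\mathfrak p$ is prime and $\sqrt{\langle F\rangle^{(\infty)}}=\bigcap_{i}\big(\langle C_i\rangle^{(\infty)}:H_{C_i}^\infty\big)$, we have $\mathfrak p\supseteq\langle C_i\rangle^{(\infty)}:H_{C_i}^\infty$ for some $i$, and, by minimality, $\mathfrak p$ is a minimal prime over that regular ideal. Hence the number of components of $V(F)$ is at most $\sum_{i=1}^{r}\nu_i$, where $\nu_i$ is the number of minimal primes over $\langle C_i\rangle^{(\infty)}:H_{C_i}^\infty$. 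Since each $C_i$ has order and degree $\leqslant N$, the structure theory of regular differential ideals (Rosenfeld's and Lazard's lemmas) reduces the enumeration of these minimal primes to the enumeration of the minimal primes of a purely algebraic ideal generated by polynomials of degree $\leqslant N$ in at most $n\binom{N+m}{m}$ variables, for which a B\'ezout-type bound is computable from $m,n,N$. Combining the estimates, $\sum_i\nu_i$ is bounded by a computable function of $m$ and $\max\{n,s\}$, which we take to be $\components(m,\max\{n,s\})$.

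The main obstacle is this last step. In the ODE case one knows in advance that Rosenfeld--Gr\"obner outputs equations of order at most $\ord F$, so one may work in a fixed finite-dimensional polynomial ring from the start; for PDEs this fails, since the output orders can genuinely exceed $\ord F$ (as noted in the introduction), and it is exactly here that the general theorem of Section~\ref{sec:bound_theories} is indispensable: it supplies the computable bound $N$ on the output orders, after which the reduction to a finite-dimensional problem of algebraic geometry, and hence to a classical component count, becomes available. A secondary point to be verified is that every subroutine of Rosenfeld--Gr\"obner (choice of ranking, partial remainders, coherence and $\Delta$-polynomial checks, consistency tests) manipulates elements of $k$ only through the three permitted operations; here factorization-freeness is essential, because polynomial factorization over an arbitrary field of characteristic zero cannot be carried out with field arithmetic and zero testing alone.
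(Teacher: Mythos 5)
Your proposal is correct and matches the paper's proof essentially step for step: normalize $F$ to a basis of bounded cardinality, realize Rosenfeld--Gr\"obner as a total algorithm over (a theory extending) $\DCF_m$ and invoke Theorem~\ref{lem:size_bound} to bound the number, orders, and degrees of the output systems, and then count minimal primes of each regular differential ideal by passing (via Rosenfeld's lemma, as in~\cite[Theorem~4]{BLOP09}) to an algebraic ideal in a bounded-order polynomial ring and applying a B\'ezout estimate. The only cosmetic difference is that the paper works over the two-sorted theory $\DCF_m\oplus\RCF$ to let the ranking vary, whereas fixing one ranking, as you implicitly do, suffices for the component count.
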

Additional details and proof are given in Theorem~\ref{thm:components}. 

\begin{theorem}[Upper bound for elimination in delay PDEs]For all non-negative integers $r$, $m$ and $s$, there exists a computable $B=B(r,m, s)$ such that, for all:  \begin{itemize}
\item non-negative integers $q$ and $t$,
   \item a $\Delta$-$\sigma$-field
    $k$ with $\Char k =0$ and $|\Delta|=m$,   
\item sets of $\Delta$-$\sigma$-polynomials $F\subset k[\bm{x}_{t},\bm{y}_{s}]$, where $\bm{x} =x_1,\ldots,x_q$,  $\bm{y}=y_1,\ldots,y_r$, and 
$\deg_{\bm{y}}F\leqslant s$, 
   \end{itemize}
   we have 
     \begin{multline*}\big\langle \sigma^i(F)\mid i\in \mathbb{Z}_{\geqslant 0} \big\rangle^{(\infty)}\cap k[\bm{x}_\infty]\ne\{0\}\\ \iff \langle \sigma^i(F)\mid i\in [0,B] \big\rangle^{(B)}\cap k[\bm{x}_{B+t}]\ne\{0\}.\end{multline*}
\end{theorem}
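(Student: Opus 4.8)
The direction $\Longleftarrow$ is immediate: since $\langle \sigma^i(F)\mid i\in[0,B]\rangle^{(B)}\subseteq\langle\sigma^i(F)\mid i\in\mathbb{Z}_{\geqslant0}\rangle^{(\infty)}$ and $k[\bm{x}_{B+t}]\subseteq k[\bm{x}_\infty]$, a nonzero element of the left-hand ideal is a nonzero element of the right-hand one.

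For $\Longrightarrow$ the plan is to make the $\bm{x}$-variables generic — which is what removes the dependence on $q$ and $t$ — and then to invoke a uniform bound coming from the machinery of this paper. Let $K:=\Quot\!\big(k[\bm{x}_\infty]\big)$ be the $\Delta$-$\sigma$-field obtained from $k$ by adjoining all the $\theta\sigma^{j}x_l$ ($\theta\in\Theta_\Delta$, $j\geqslant0$, $1\leqslant l\leqslant q$) as $\Delta$-$\sigma$-algebraically independent elements; then $\Char K=0$, and over $K$ the set $F$ is a system of polynomials in the $r$ differential-difference unknowns $\bm{y}$ with $F\subseteq K[\bm{y}_s]$ and $\deg_{\bm{y}}F\leqslant s$. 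The first, easy, observation is that
\[
\big\langle \sigma^i(F)\mid i\geqslant0\big\rangle^{(\infty)}\cap k[\bm{x}_\infty]\ne\{0\}\ \Longrightarrow\ F=0 \text{ has no solution in any $\Delta$-$\sigma$-field extension of }K :
\]
a nonzero $g\in k[\bm{x}_\infty]$ in the elimination ideal is a nonzero, hence invertible, element of $K$, whereas any solution of $F=0$ over an extension of $K$ would annihilate all of $\langle\sigma^i(F)\mid i\geqslant0\rangle^{(\infty)}$ and in particular $g$. (The converse also holds, and is essentially the effective statement below.)

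The core of the proof is a \emph{uniform difference-differential Nullstellensatz}: there should be a computable $B_0=B_0(r,m,s)$ such that, whenever $F=0$ is inconsistent over $K$, one already has $1\in\langle\sigma^i(F)\mid i\in[0,B_0]\rangle^{(B_0)}$. I would obtain this along the lines of Theorem~\ref{thm:components}. Run the difference-differential version of the Rosenfeld--Gr\"obner algorithm of~\cite{BLOP09} (compare~\cite{GR2019}) on $F$ over $K$; it decides consistency and, when $F=0$ is inconsistent, outputs finitely many $\theta\sigma^i(F_j)$ that generate the unit ideal. This algorithm terminates on every input and touches elements of $K$ only through arithmetic, the derivations $\partial_i$, the endomorphism $\sigma$, and zero testing, so the bounding theorem of Section~\ref{sec:bound_theories} — whose hypotheses the pertinent difference-differential theory satisfies — bounds the orders and shifts appearing in the certificate by a computable function of the input size; and over $K$ that input size is controlled by $r$, $m$, and $s$ alone, the $\bm{x}$-data now residing in the base field. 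Setting $B:=B_0$, I would then clear denominators in a certificate $1=\sum_{j}\sum_{i\leqslant B_0,\ \ord\theta\leqslant B_0}a_{j,i,\theta}\,\theta\sigma^i(F_j)$ with $a_{j,i,\theta}\in K[\bm{y}_{B_0}]$: the elements of $K$ occurring are rational in the $\theta'\sigma^{j'}x_l$ with $\ord\theta'\leqslant B_0+t$ and $j'\leqslant B_0+t$, so multiplication by a common denominator $d\in k[\bm{x}_{B_0+t}]\setminus\{0\}$ yields $0\ne d\in\langle\sigma^i(F)\mid i\in[0,B]\rangle^{(B)}\cap k[\bm{x}_{B+t}]$, which is the right-hand side of the theorem.

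The step I expect to be the main obstacle is precisely this uniform difference-differential Nullstellensatz with the asserted dependence on $r$, $m$, $s$ only. One must verify that a Rosenfeld--Gr\"obner-type procedure for difference-differential systems is an admissible algorithm terminating on every input in the sense of Section~\ref{sec:bound_theories} — which rests on a basis (Ritt--Raudenbush) theorem for perfect $\Delta$-$\sigma$-ideals — and that, once the $\bm{x}$-variables are absorbed into $K$, the quantity controlling the bound really does collapse to a function of $r$, $m$, $s$, so that neither the unbounded $\bm{x}$-degrees in $F$ nor the number of polynomials in $F$ enters $B_0$; this is where the bound for the number of irreducible components of Theorem~\ref{thm:components} and the behaviour of Kolchin dimension polynomials under projection are used. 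The remaining ingredients — the direction $\Longleftarrow$, the easy half of the passage to $K$, and the denominator clearing — are routine.
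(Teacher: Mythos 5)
Your reduction to the case of generic $\bm{x}$ (passing to $K=\Quot(k[\bm{x}_\infty])$ so that the bound cannot depend on $q$, $t$), the recognition that the left-hand side is equivalent to inconsistency over $K$, and the denominator-clearing step at the end are all in the spirit of the paper's argument. The ``$\Longleftarrow$'' direction is handled the same way. What differs is the core: you propose to obtain a single computable bound $B_0(r,m,s)$ by running a \emph{difference-differential} Rosenfeld--Gr\"obner algorithm over $K$ and feeding it into the machinery of Section~\ref{sec:bound_theories}, whereas the paper never runs any algorithm over a $\Delta$-$\sigma$-theory at all.

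The step you flag as the ``main obstacle'' is in fact a genuine gap, for two reasons. First, Theorem~\ref{lem:size_bound} requires a \emph{complete} decidable theory, and the theory of differentially closed $\Delta$-$\sigma^\ast$-fields (like ACFA) is model-complete and decidable but not complete; so even granting a terminating $\Delta$-$\sigma$-Rosenfeld--Gr\"obner procedure, the bounding theorem of Section~\ref{sec:bound_theories} does not directly apply to it. Second, and independently, no such total algorithm is established (or invoked) in the paper: the termination of the ordinary Rosenfeld--Gr\"obner algorithm rests on the Ritt--Raudenbush property and the Noetherianity of rankings for $\Delta$-polynomial rings, and the analogous facts needed for a $\Delta$-$\sigma$-characteristic-set decomposition are precisely what one cannot take for granted here. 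The paper's route avoids both issues by staying purely differential: it converts the $\sigma$-structure into the combinatorics of \emph{trains} of irreducible $\Delta$-varieties (Section~\ref{sec:trains}), uses Theorem~\ref{thm:components} and Lemma~\ref{lem:Kolchin_poly_projection} to control the Kolchin polynomials of projections of components, and then runs the descent on Kolchin polynomials (Proposition~\ref{prop:chain_Kolchin}, Lemma~\ref{lem:next_Kolchin}) to obtain a computable bound $A(\max\{r,m,s\})$ on the number of $\sigma$-shifts needed (Proposition~\ref{prop:BboundA}, Corollary~\ref{cor:620}). The order bound $B$ is then supplied separately by the effective \emph{differential} Nullstellensatz of Gustavson applied to the finitely many shifted copies $\sigma^0(F),\ldots,\sigma^A(F)$ viewed as a purely differential system, and the contradiction is closed via partial solutions/sequence solutions in a $\Delta$-$\sigma^\ast$-field from Lemma~\ref{lem:embedding}. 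In short, your plan collapses two separate ingredients (a shift bound from the train argument, and an order bound from the differential Nullstellensatz) into one hypothetical $\Delta$-$\sigma$-algorithm that the framework of this paper does not license; to repair it you would need to replace that step with the train-of-varieties argument or supply a terminating decomposition algorithm together with a complete decidable $\Delta$-$\sigma$-theory for it to run over.
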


\begin{corollary}[Effective Nullstellensatz for delay PDEs]For all non-negative integers $r$, $m$ and $s$, there exists a computable $B=B(r,m, s)$ such that, for all:  \begin{itemize}
   \item  $\Delta$-$\sigma$-fields
    $k$ with $\Char k =0$ and $|\Delta|=m$,   
\item sets of $\Delta$-$\sigma$-polynomials $F\subset k[\bm{y}_{s}]$, where  $\bm{y}=y_1,\ldots,y_r$, and 
$\deg F\leqslant s$, 
   \end{itemize}
    the following statements are equivalent:
   \begin{enumerate}
  \item There exists a $\Delta$-$\sigma^*$ field $L$ extending $k$ such that $F=0$ has a sequence solution in $L$.
   \item $1 \notin \langle \sigma^i(F)\mid i\in [0,B] \big\rangle^{(B)}$.
   \item  There exists a field  extension $L$ of $k$ such that the polynomial system
     $\big\{\sigma^i(F)^{(j)}=0\mid i,j \in [0,B]\big\}$
      in the finitely many unknowns $\bm{y}_{B+s}$ has a solution in  $L$.
   \end{enumerate}
\end{corollary}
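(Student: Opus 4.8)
The plan is to set $I:=\langle\sigma^i(F)\mid i\geqslant 0\rangle^{(\infty)}$, the $\Delta$-$\sigma$-ideal generated by $F$ in $k[\bm{y}_\infty]$, to interpose the ``abstract'' consistency condition ``$1\notin I$'' between $(1)$ and $(2)$, and to deduce $(1)\iff(2)\iff(3)$ from three inputs: the differential-difference Nullstellensatz for sequence solutions, the preceding elimination theorem specialized to the case of no $\bm{x}$-variables, and the classical Hilbert Nullstellensatz over $k$. The bound $B=B(r,m,s)$ of the corollary would be exactly the one produced by the elimination theorem; here $\deg F\leqslant s$ supplies the hypothesis $\deg_{\bm{y}}F\leqslant s$ and $r$, $m=|\Delta|$ play the same roles, so nothing new has to be constructed.

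For $(1)\iff(1\notin I)$ I would invoke the differential-difference Nullstellensatz: $F=0$ has a sequence solution in some $\Delta$-$\sigma^\ast$-field extension of $k$ if and only if $I$ is a proper ideal. One direction is evaluation at a solution; for the converse one passes to a prime $\Delta$-$\sigma$-quotient of $k[\bm{y}_\infty]/I$ and then to the inversive closure of its fraction field. The point of working with \emph{sequence} solutions, rather than solutions lying in a single $\Delta$-$\sigma^\ast$-field, is precisely that a sequence solution need not satisfy the perfect closure of $I$, so that the ordinary, and not the perfect, $\Delta$-$\sigma$-ideal is the correct object. This is where the genuine difficulties of difference (as opposed to differential) algebra reside --- radical $\Delta$-$\sigma$-ideals lack the prime decomposition that radical differential ideals enjoy --- and I regard establishing, or correctly locating, this Nullstellensatz as the main obstacle; once it is available, the remaining equivalences are routine.

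For $(1\notin I)\iff(2)$, one applies the preceding theorem with $q=0$ and $t=0$, which is permitted since it is stated for all non-negative $q,t$. With no $\bm{x}$-variables both $k[\bm{x}_\infty]$ and $k[\bm{x}_{B+t}]$ reduce to $k$, so the theorem says that $I$ meets $k$ nontrivially if and only if $\langle\sigma^i(F)\mid i\in[0,B]\rangle^{(B)}$ does; since an ideal of a $k$-algebra containing $k$ meets $k$ nontrivially exactly when it contains $1$, passing to contrapositives identifies $(2)$ with the condition $1\notin I$. For $(2)\iff(3)$, one notes that $\langle\sigma^i(F)\mid i\in[0,B]\rangle^{(B)}$ is generated by the finite set $\{\theta\sigma^i(f)\mid\theta\in\Theta_\Delta(B),\ 0\leqslant i\leqslant B,\ f\in F\}$, which is exactly the set of polynomials occurring in the system $\{\sigma^i(F)^{(j)}=0\mid i,j\in[0,B]\}$, and all of these lie in $k[\bm{y}_{B+s}]$. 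Since $k[\bm{y}_\infty]$ is a polynomial ring over $k[\bm{y}_{B+s}]$, the element $1$ lies in this ideal computed in $k[\bm{y}_\infty]$ if and only if it already does so computed in $k[\bm{y}_{B+s}]$; and by the weak Hilbert Nullstellensatz over the field $k$, this latter ideal is proper if and only if the finite system $\{\sigma^i(F)^{(j)}=0\}$ in the finitely many unknowns $\bm{y}_{B+s}$ has a common zero in some field extension $L$ of $k$. Concatenating the three equivalences gives $(1)\iff(2)\iff(3)$.
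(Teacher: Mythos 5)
Two of your three reductions are sound. The step $(1\notin I)\iff(2)$ via Theorem~\ref{thm:main3} with $q=t=0$ is correct (both $k[\bm{x}_\infty]$ and $k[\bm{x}_{B+t}]$ degenerate to $k$, and an ideal in a $k$-algebra meets $k$ nontrivially iff it contains $1$), and $(2)\iff(3)$ is indeed just the weak Hilbert Nullstellensatz together with the observation that the finitely many generators $\theta\sigma^i(f)$ all lie in $k[\bm{y}_{B+s}]$, over which $k[\bm{y}_\infty]$ is a polynomial ring.

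The gap is the direction $1\notin I\implies(1)$, which you propose to settle by ``passing to a prime $\Delta$-$\sigma$-quotient of $k[\bm{y}_\infty]/I$ and then to the inversive closure of its fraction field.'' This cannot work. A prime $\Delta$-$\sigma$-ideal containing $I$ need not exist even when $I$ is proper: take $m=0$, $r=1$, $k=\mathbb{Q}$ with $\sigma=\id$, $F=\{y^2-1,\;y\cdot\sigma(y)+1\}$. Then $k[\bm{y}_\infty]/I\cong\mathbb{Q}\times\mathbb{Q}$ with $\sigma$ swapping the factors, which is $\sigma$-simple but not a domain, so $I$ is contained in no proper prime $\sigma$-ideal; yet $1\notin I$, and $(\ldots,1,-1,1,-1,\ldots)\in\mathbb{Q}^{\mathbb{Z}}$ is a sequence solution. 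More fundamentally, your route, if it worked, would produce a solution of $F$ in a $\Delta$-$\sigma^\ast$-field $L$, and pushing forward along $a\mapsto(\sigma^ia)_{i\in\mathbb{Z}}$ would give a sequence solution; but ``solution in a $\Delta$-$\sigma^\ast$-field'' is strictly stronger than ``sequence solution'' and fails in the example. You notice yourself that sequence solutions ``need not satisfy the perfect closure of $I$'' --- which is precisely why one must not pass through a prime $\Delta$-$\sigma$-quotient. So the naive difference-Nullstellensatz you want to ``locate'' is false, and the existence of sequence solutions is exactly the non-trivial content that the train machinery of Section~\ref{sec:delayPDE} is built to deliver.

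The correct argument for $1\notin I\implies(1)$, available from what the paper has already set up, is: restrict to the differential subring $k\{\bm{y}_{\infty,A+s-1}\}_\Delta$ with $A:=A(\max\{r,m,s\})$; since the differential ideal generated there by $\{\sigma^i(F)\mid 0\leqslant i\leqslant A-1\}$ is contained in $I$, it is proper, so by the differential Nullstellensatz over the differentially closed $\Delta$-$\sigma^\ast$-field $K$ of Lemma~\ref{lem:embedding} the system has a $\Delta$-solution, i.e.\ $F$ has a partial solution of length $A$; then Corollary~\ref{cor:620} upgrades this to a sequence solution in $K^{\mathbb{Z}}$. Substituting this for your ``Nullstellensatz'' step (equivalently, observing that $(2)\implies(1)$ is what the body of the proof of Theorem~\ref{thm:main3} actually establishes when $q=t=0$) repairs the argument; the rest of your reductions then go through.
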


The two preceding theorems are proved using our main technical result about algorithms performing computations in complete 
 decidable theories.
Stating it precisely requires defining  admissible algorithms carefully, so we postpone it until Section~\ref{sec:bound_theories} and give here a simplified and informal version of the statement. 

\begin{theorem}[Algorithm yields a bound, stated precisely as Theorem~\ref{lem:size_bound}]There exists a computable function  with input
    \begin{itemize}
        \item 
        a complete decidable  theory $T$;
        \item an algorithm $\mathcal{A}$ performing computations in a model of $T$ restricted to using only definable functions when working with elements of the model;
        \item positive integer $\ell$
    \end{itemize}
    that computes a number $N$ such that for every model $M$ of $T$ and every $\mathbf{a} \in M^\ell$
    the size of the output of $\mathcal{A}$ with input $\mathbf{a}$ does not exceed $N$.
\end{theorem}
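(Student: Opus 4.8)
The plan is to use the compactness theorem together with an exhaustion/König's-lemma argument on the tree of possible computations of $\mathcal{A}$. The key observation is that an admissible algorithm, when run on a model of $T$, branches only on the outcomes of quantifier-free (or, after quantifier elimination in $T$, definable) conditions on the current tuple of field elements, and between branchings it only applies definable functions. Thus the run of $\mathcal{A}$ on input $\mathbf{a} \in M^\ell$ is determined by a path through a finitely branching tree whose nodes are labeled by finite sets of formulas (in the language of $T$) that $\mathbf{a}$ and the intermediate elements must satisfy; following a path records a growing conjunction of such formulas.

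First I would make precise the notion of a ``computation tree'': the root corresponds to the input variables $x_1, \ldots, x_\ell$, each internal node corresponds to a zero-test (equivalently, a branching on whether some term equals $0$, or more generally on the truth value of a definable predicate allowed by the model), and the two (or finitely many) children correspond to the possible answers. Because $\mathcal{A}$ is required to terminate on every input \emph{and} on every model of $T$, every path in this tree is finite — but a priori the tree could be infinite with unboundedly long paths. Second, I would attach to each node $v$ the formula $\varphi_v(\bar x)$ in the free variables over $k$ obtained by conjoining the branching conditions along the path from the root to $v$ (with the intermediate elements expressed as definable functions of the inputs, so everything comes back to $\bar x$); this $\varphi_v$ is satisfiable in some model of $T$ precisely when some input in some model follows that path. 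Third — this is where completeness and decidability of $T$ enter — the set of \emph{satisfiable} $\varphi_v$ forms a subtree which is finitely branching, and by the termination hypothesis applied across all models it has no infinite path: if it did, compactness would produce a model of $T$ with an input on which $\mathcal{A}$ runs forever, a contradiction. By König's lemma this satisfiable subtree is finite, hence has a computable bound on its depth and size (here decidability of $T$ is used to actually \emph{find} this finite tree: we enumerate nodes, test each $\varphi_v$ for satisfiability relative to $T$, prune the unsatisfiable ones, and stop when no leaf admits a satisfiable extension). Finally, the size of the output along any terminating branch is bounded by a function of the depth of that branch and the (bounded) complexity of the definable functions composed along it, so taking the maximum over the finitely many leaves of the satisfiable subtree yields the desired computable $N$.

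The main obstacle I expect is the third step: proving that the satisfiable subtree has no infinite path, and doing so \emph{uniformly} enough to extract a computable bound rather than a mere existence statement. The delicate point is that termination of $\mathcal{A}$ is assumed on each \emph{individual} input in each \emph{individual} model, whereas an infinite path in the satisfiable subtree only gives, for each finite initial segment, \emph{some} model-and-input realizing it; one must invoke compactness (using that $T$ is complete, so all these partial types are consistent with a single completion) to amalgamate these into one model and one input — possibly in an $\aleph_0$-saturated elementary extension — along which $\mathcal{A}$ fails to terminate. Care is also needed because the branching conditions may involve parameters from the base field $k$, not just the logical vocabulary of $T$; one handles this by working in the language expanded with constants for a putative counterexample and checking that the relevant type remains finitely satisfiable. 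Once the non-existence of infinite satisfiable paths is established, the passage to a \emph{computable} bound is the routine consequence of decidability plus König's lemma sketched above.
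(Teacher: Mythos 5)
Your proposal takes essentially the same approach as the paper: you explore the tree of possible query/response sequences of the algorithm, attach a first-order formula (the conjunction of branching conditions) to each node, use decidability of $T$ to prune unsatisfiable branches, and rule out an infinite satisfiable path by compactness together with passage to a sufficiently saturated model — exactly the paper's Lemma~\ref{lem:lemvarphi} (which builds the $\psi_{\mathbf r}$ level by level by running $\mathcal A$ against synthetic oracles) combined with Lemma~\ref{lem:baer_category} (the descending-chain-of-definable-sets statement that plays the role of your K\H{o}nig's-lemma step). The only superficial differences are that you phrase the finiteness argument via K\H{o}nig's lemma on a finitely branching tree rather than via nested definable sets, and you speak of ``zero tests'' and ``intermediate elements as definable functions of the input'' where the paper works directly with an oracle answering arbitrary first-order queries about $\mathbf a$ — but these are equivalent formalizations and the substance of the argument is identical.
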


For the application of this to the Rosenfeld-Gr\"obner algorithm, see Theorem~\ref{thm:RG}.

%%%%%%%%%%%%%%%%%%%%%%%%%%%%

\section{Bounds for the output size of algorithms over complete theories}\label{sec:bound_theories}

In this section, we will use the formalism of oracle Turing machines~\cite[\S~14.3]{Papadimitriou}.
Roughly speaking, an oracle Turing machine is a Turing machine with an extra tape for performing queries to an external oracle.
An oracle is not considered to be a part of the machine.

\subsection{Setup}\label{subsec:gen_algo}

%%%%%%%%%%%%%%%%%%%%%%%%%%%%%%%%%%%%%%%%%%

To consider an algorithm dealing with elements of a (not necessarily computable)
model of a theory $T$, we will ``encapsulate'' the elements of the model given to
the algorithm into an oracle that allows to perform only first-order operations with 
them as defined below.
Alternatively, one could adapt other approaches used to formalize computations in real numbers~\cite[Section~3]{BCSS1998} or in arbitrary structures (see~\cite[\S 1]{Poizat} and~\cite[\S 2.2]{CK99}).

\begin{definition}[$T$-oracle]
  Let $\mathcal{L}$ be a language and $T$ be a theory in $\mathcal{L}$.
  For elements $a_1, \ldots, a_\ell$ of a model $M$ of $T$, any oracle  that supports the following queries: given a formula $\varphi(x_1, \ldots, x_\ell)$, the oracle returns the 
  value $\varphi(a_1, \ldots, a_\ell)$ in $M$ (can be true or false),  will be denoted by $\mathcal{O}_{M}(a_1, \ldots, a_\ell)$  and called an {\em evaluation oracle}.

\end{definition}

\begin{definition}[Total algorithm over $T$]\label{def:gen_alg}
  An oracle Turing machine $\mathcal{A}$ will be called \emph{a total algorithm over $T$} if,
  for all positive integers $\ell$, every model $M$ of $T$ and every $a_1, \ldots, a_\ell \in M$,  the machine with  every input and oracle $\mathcal{O}_M(a_1, \ldots, a_\ell)$ is guaranteed to terminate.
\end{definition}

%%%%%%%%%%%%%%%%%%%%%%%%%%%%%%%%%%%%%%%%%%
\subsection{Auxiliary bound and result}
\begin{lemma}\label{lem:lemvarphi}
  There is an algorithm that takes as input:
  \begin{itemize}
      \item language $\mathcal{L}$;
      \item 
      a complete decidable theory $T$ given by a Turing machine checking correctness of sentences in the theory;
      \item a total algorithm $\mathcal{A}$ over $T$;
      \item positive integers $\ell$ and $N$;
      \item a string $\mathcal S$ in the input alphabet of $\mathcal{A}$;
  \end{itemize}
  and computes 
  \begin{itemize}
      \item a first-order formula $\varphi = \varphi_{T, \mathcal{A}}(\ell, \mathcal{S}, N)$ in $\mathcal{L}$ in $\ell$ variables and
      \item a number $\mathcal{N} := \mathcal{N}_{T, \mathcal{A}}(\ell, \mathcal{S}, N)$
  \end{itemize}
  such that, for any model $M$ of $T$ and
  tuple $\mathbf{a} \in M^\ell$, the following are equivalent:
  \begin{enumerate}
      \item the sentence $\varphi(\mathbf{a})$ is true in $M$;
      \item algorithm $\mathcal{A}$ with input $\mathcal{S}$ and oracle $\mathcal{O}_M(\mathbf{a})$ terminates after performing at most $N$ queries to the oracle
  \end{enumerate}
  and if these statements are true, then 
  the number of steps performed by $\mathcal{A}$ with input $\mathcal{S}$ and oracle $\mathcal{O}_M(\mathbf{a})$ does not exceed $\mathcal{N}$.
\end{lemma}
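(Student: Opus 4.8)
The key idea is to simulate the execution of $\mathcal{A}$ with input $\mathcal{S}$ symbolically, treating each oracle query as a branching point, and to collect the finitely many oracle-answer patterns that lead to termination within $N$ queries. First I would describe a tree-of-executions construction: run $\mathcal{A}$ on input $\mathcal{S}$, and whenever it issues an oracle query with some formula $\psi(x_1, \ldots, x_\ell)$, fork the simulation into two branches corresponding to the answers ``true'' and ``false'', recording $\psi$ or $\neg\psi$ accordingly in the branch's history. Prune any branch that has issued more than $N$ queries, and also prune branches whose accumulated conjunction of recorded (possibly negated) query formulas is inconsistent with $T$ — consistency is decidable because $T$ is complete and decidable, so this check is effective. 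Every surviving branch either (a) reaches a halting state of $\mathcal{A}$, or (b) is pruned for exceeding $N$ queries. Because $\mathcal{A}$ is a total algorithm over $T$ and each consistent branch is realized by some model of $T$ (by the consistency check and compactness), every consistent branch must in fact halt before the $N$-query cutoff; so the tree is finite and the simulation terminates. This is the step I expect to be the main obstacle: one must argue carefully that the simulation halts, which requires combining totality of $\mathcal{A}$, the decidability of $T$-consistency for the branch formulas, and a König's-lemma-style argument on the finitely-branching query tree — and one must handle the subtlety that a branch whose history is $T$-consistent is genuinely witnessed by a model, so $\mathcal A$'s guaranteed termination applies to it.

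Once the finite tree is built, I would define $\varphi$ as the disjunction, over all branches that halt within $N$ queries, of the conjunction of the (possibly negated) query formulas recorded along that branch; this is a first-order formula in the $\ell$ variables $x_1, \ldots, x_\ell$, and it is computable from the tree. For a given model $M$ and tuple $\mathbf{a} \in M^\ell$, the actual run of $\mathcal{A}$ with oracle $\mathcal{O}_M(\mathbf{a})$ follows exactly one root-to-leaf path in the tree (the path selected by the true/false answers the oracle gives), and that run halts within $N$ queries if and only if $\mathbf{a}$ satisfies the branch formula of a halting leaf — which, since the branch formulas are pairwise mutually exclusive, is equivalent to $M \models \varphi(\mathbf{a})$. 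This gives the equivalence of (1) and (2).

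For the step bound $\mathcal{N}$, I would simply take $\mathcal{N}$ to be the maximum, over all halting leaves of the tree, of the number of steps $\mathcal{A}$ performs along that branch (the simulation records this count for free). If $M \models \varphi(\mathbf{a})$, the true run coincides with the computation along the corresponding halting branch, so its step count is at most $\mathcal{N}$. Finally I would note that every ingredient — building the tree, the $T$-consistency checks via the decision procedure for $T$, forming the disjunction, and extracting the maximum step count — is carried out by a single algorithm taking $(\mathcal{L}, T, \mathcal{A}, \ell, N, \mathcal{S})$ as input, which is exactly what the lemma asserts. The only genuinely delicate point, as noted, is the termination of the symbolic simulation, and I would isolate that as an internal claim with the König's-lemma/compactness argument spelled out.
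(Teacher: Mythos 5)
Your approach is essentially the paper's: both build the binary tree of oracle-answer sequences for $\mathcal{A}$ on input $\mathcal{S}$, prune branches whose accumulated conjunction of (negated) query formulas is $T$-inconsistent using decidability, take $\varphi$ from the halting leaves (the paper equivalently sets $\varphi = \lnot\psi_{N+1}$) and $\mathcal{N}$ from the step-counts along those branches, and justify that each consistent branch can be simulated up to its next query or halting state in finitely many steps by realizing the branch in an actual model and invoking totality of $\mathcal{A}$. One misstatement to fix in your termination argument: a consistent branch need not halt before the $N$-query cutoff (those branches are exactly the ones making $\varphi(\mathbf{a})$ false); totality only ensures that $\mathcal{A}$ runs finitely many steps between consecutive queries along a consistent branch, and the finiteness of the whole tree then follows from the $(N+1)$-query cutoff together with binary branching, not from all branches halting.
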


\begin{proof}
  We describe an algorithm for computing $\varphi_{T, \mathcal{A}}(\ell, \mathcal{S}, N)$ and $\mathcal{N}_{T, \mathcal{A}}(\ell, \mathcal{S}, N)$.
  Fix some $\mathcal{L}, T, \mathcal{A},$ $\ell$, and $\mathcal{S}$.

  We will describe an algorithm that, for a given positive integer $s$, computes first-order formulas $\psi_s$ and $q_s$ in $\mathcal{L}$ in the variables $\mathbf{x} = (x_1, \ldots, x_\ell)$ and a positive integer $\mathcal{N}_s$ such that, for every model $M$ of $T$ and every $\mathbf{a} \in T^\ell$
  \begin{itemize}
      \item $\psi_s(\mathbf{a})$ is true in $M$ iff algorithm $\mathcal{A}$ with input $\mathcal{S}$ and oracle $\mathcal{O}_M(\mathbf{a})$ will perform at least $s$ queries;
      \item if $\psi_s(\mathbf{a})$ is true in $M$, then the result of the $s$-th query will be $q_s(\mathbf{a})$;
      \item if algorithm $\mathcal{A}$ with input $\mathcal{S}$ and oracle $\mathcal{O}_M(\mathbf{a})$ performs at most $s$ queries, then 
     the number of steps performed does not exceed $\mathcal{N}_s$.
  \end{itemize}
  
  Fix some $s \geqslant 1$ and assume that the algorithm have computed $\psi_1, \ldots, \psi_{s - 1}$, $q_1, \ldots, q_{s - 1}$, and $\mathcal{N}_{0}, \ldots, \mathcal{N}_{s - 2}$. 
  Assume that $\mathcal{A}$ with input $\mathcal{S}$ has performed $s - 1$ queries.
  Then whether or not an $s$-th query will be performed is determined by the results of the first $s - 1$ queries.
  Fix some $\mathbf{r} \in \{\mathrm{True}, \mathrm{False}\}^{s - 1}$.
  It will represent possible results of the first $s - 1$ queries.
  Consider the following formula in $\mathcal{L}$:
  \[
    \psi_{\mathbf{r}}(\mathbf{x}) := \psi_{s - 1}(\mathbf{x}) \wedge \bigwedge\limits_{i = 1}^{s - 1} \left( q_i(\mathbf{x}) \iff r_i \right),
  \]
  where we assume $\psi_0 = \mathrm{True}$.
  The algorithm uses the algorithm for checking correctness of sentences in $T$
  to check whether the sentence $\exists\mathbf{x}\; \psi_{\mathbf{r}}(\mathbf{x})$ is false in~$T$.
  If it is, then there is no oracle of the form $\mathcal{O}_M(\mathbf{a})$ such that $\mathcal{A}$ will perform at least $s - 1$ queries on it with the results being $r_1, \ldots, r_{s - 1}$.
  
  In the case of $\exists \mathbf{x}\; \psi_{\mathbf{r}}(\mathbf{x})$ is true in $T$,
  the algorithm will run $\mathcal{A}$ with input $\mathcal{S}$ and an oracle $\mathcal{O}_{\mathbf{r}}$ that works as follows.
  For the first $s - 1$ queries, $\mathcal{O}_{\mathbf{r}}$ will return $r_1, \ldots, r_{s - 1}$.
  For all subsequent queries, it always returns True.
  The algorithm will stop the execution of $\mathcal{A}$ if $\mathcal{A}$ makes an $s$-th query to the oracle, and denote the formula in the query by $q_{\mathbf{r}}$.
  
  Since $\exists\mathbf{x}\;\psi_{\mathbf{r}}(\mathbf{x})$ is true in $T$, $\mathcal{O}_{\mathbf{r}}$ gives the same responses to the first $s - 1$ queries as some oracle of the form $\mathcal{O}_M(\mathbf{a})$.
  Since $\mathcal{A}$ must terminate in finite time for every such oracle, one of the following  must happen:
  \begin{enumerate}
      \item $\mathcal{A}$ will perform an $s$-th query. 
      \item $\mathcal{A}$ will terminate after performing only $s - 1$ queries.
  \end{enumerate}
  In the former case, as described above, the algorithm will define a formula $q_{\mathbf{r}}$ to be the $s$-th query.
  In the latter case, the algorithm will define $\mathcal{N}_{\mathbf{r}}$ to be 
  the number of steps  performed by $\mathcal{A}$.
  Then the algorithm computes
\begin{gather*}
      \psi_s(\mathbf{x}) := \bigvee\limits_{q_{\mathbf{r}} \text{ is defined}} \psi_{\mathbf{r}}(\mathbf{x}),\quad
      q_s(\mathbf{x}) := \bigwedge\limits_{q_{\mathbf{r}}\text{ is defined}} (\psi_{\mathbf{r}}(\mathbf{x}) \implies q_{\mathbf{r}}(\mathbf{x})),\\
\mathcal{N}_{s - 1} := \max\left(\mathcal{N}_{s - 2}, \sum\limits_{\mathcal{N}_{\mathbf{r}} \text{ is defined}} \mathcal{N}_{\mathbf{r}}\right),
\end{gather*}
   where we assume $\mathcal{N}_{-1} = -\infty$.
   If the set $\{\mathbf{r}\mid q_{\mathbf{r}} \text{ is defined}\}$ is empty, the algorithm sets $\psi_s(\mathbf{x}) = \operatorname{False}$ and $q_s(\mathbf{x}) = \operatorname{True}$.
   Finally, the algorithm returns $\varphi_{T, \mathcal{A}}(\ell, \mathcal{S}, N) := \lnot \psi_{N + 1}$ and $\mathcal{N}_{T, \mathcal{A}}(\ell, \mathcal{S}, N) := \mathcal{N}_N$.
\end{proof}

%%%%%%%%%%%%%%%%%%%%%%%%%%%%%

\begin{lemma}\label{lem:baer_category}
  Let $T$ be a theory and $M$ an $\aleph_0$-saturated model.
  Let $U_1 \supset U_2 \supset U_3 \supset \ldots$ be a sequence of definable sets in $M^n$ such that
  $\bigcap\limits_{i = 1}^\infty U_i = \varnothing$. Then there exists $N$ such that $U_N = \varnothing$.
\end{lemma}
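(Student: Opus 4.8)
The plan is to argue by contradiction: suppose every $U_i$ is nonempty, and derive a point in the intersection using $\aleph_0$-saturation. Each $U_i$ is definable, so there is a formula $\varphi_i(\mathbf{x})$ (with parameters from $M$, allowed since definable) such that $U_i = \{\mathbf{a}\in M^n \mid M\models\varphi_i(\mathbf{a})\}$. Since the sets are nested, $U_i\supseteq U_{i+1}$ means $M\models\forall\mathbf{x}\,(\varphi_{i+1}(\mathbf{x})\to\varphi_i(\mathbf{x}))$. Consider the type $p(\mathbf{x}) := \{\varphi_i(\mathbf{x}) \mid i\geqslant 1\}$ in the $n$ variables $\mathbf{x}$, over the (countably many) parameters appearing in the $\varphi_i$. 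I would first check that $p$ is finitely satisfiable in $M$: any finite subset $\{\varphi_{i_1},\ldots,\varphi_{i_k}\}$ is, by nestedness, implied by $\varphi_{\max(i_1,\ldots,i_k)}$, and $U_{\max(i_1,\ldots,i_k)}\neq\varnothing$ by hypothesis, so the finite subset has a realization in $M$. Hence $p$ is a (partial) type over a parameter set of size at most $\aleph_0$.

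By $\aleph_0$-saturation of $M$ (a model realizes every type over every finite parameter set; the standard strengthening gives realization over countable parameter sets as well, but in fact here one only needs countably many parameters — if this is a concern, replace $M$ by the stronger hypothesis or note that the parameter set can be absorbed), the type $p$ is realized by some $\mathbf{b}\in M^n$. Then $M\models\varphi_i(\mathbf{b})$ for every $i$, i.e. $\mathbf{b}\in U_i$ for all $i$, so $\mathbf{b}\in\bigcap_{i=1}^\infty U_i$, contradicting $\bigcap_{i=1}^\infty U_i=\varnothing$. Therefore some $U_N$ is empty.

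The main subtlety I expect is the parameter-count issue: $\aleph_0$-saturation in the form cited (\cite[Definition 4.3.1]{Marker}) typically asserts realization of types over parameter sets of cardinality strictly less than $\aleph_0$, i.e. finite sets, whereas here the parameters defining the $U_i$ could a priori form a countably infinite set. I would resolve this by observing that the intended application only uses $\varnothing$-definable (or finitely-definable) sets $U_i$, so finite saturation suffices; alternatively, one can note that a sequence of definable sets over a fixed model $M$ that is $\aleph_0$-saturated can always be taken over a finite parameter tuple if one is willing to name it, or invoke that $M$ is in fact $\kappa$-saturated for the relevant $\kappa$. If the lemma is genuinely meant with countably many parameters, the cleanest fix is to pass to an elementary extension of $M$ that is $(\aleph_0 + |\text{params}|)^+$-saturated, run the argument there to get $\bigcap U_i\neq\varnothing$ upstairs, and pull back: emptiness of a definable set is preserved under elementary extension, so if $U_N=\varnothing$ upstairs for all $N$ fails, it fails downstairs too. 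Everything else is a routine unwinding of definitions.
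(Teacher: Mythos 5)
Your proof is the paper's own: argue by contradiction, show the collection of formulas $\{\varphi_i(\mathbf{x})\}$ is finitely satisfiable using nestedness and nonemptiness of each $U_i$, then apply $\aleph_0$-saturation to realize the whole type in $M$. The parameter worry you flag is real, and it applies equally to the paper's statement and its proof: if the $U_i$ are allowed parameters ranging over an infinite subset of $M$, the lemma as literally written is false. In $(\mathbb{Q},<)$, which is $\aleph_0$-saturated for the theory of dense linear orders without endpoints, the sets $U_i := \{x : x > i\}$ for $i = 1, 2, \ldots$ are definable, nonempty, strictly decreasing, and have empty intersection, yet no $U_N$ is empty. The correct reading --- and the one actually used in Theorem~\ref{lem:size_bound}, where the $\varphi_i$ come from Lemma~\ref{lem:lemvarphi} and are formulas in $\mathcal{L}$ with no parameters --- is that the $U_i$ are $\varnothing$-definable; that is exactly your first proposed fix. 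Your second proposed fix does not close, however: passing to a sufficiently saturated elementary extension $M^*$ and concluding $\bigcap U_i^* \neq \varnothing$ there contradicts nothing, because the hypothesis $\bigcap U_i = \varnothing$ is an infinitary statement and does not transfer from $M$ to $M^*$. (Indeed, in the $\mathbb{Q}$ example the intersection becomes nonempty in any $\aleph_1$-saturated elementary extension.) So that branch of your argument stalls; stick with the observation that the $U_i$ are parameter-free in the intended application.
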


\begin{proof}
  Assume the contrary, that is, that $U_i \neq \varnothing$ for every $i \geqslant 1$. 
  We will show that $\bigcap\limits_{i = 1}^\infty U_i \neq \varnothing$.

  We show that a collection of formulas $\{x \in U_i\}_{i = 1}^\infty$ is finitely satisfiable. 
  Indeed, let $S \subset \mathbb{Z}_{> 0}$ be a finite set and $N = \max S$.
  Then $\bigcap_{i \in S} U_i = U_N \neq \varnothing$.
  Due to compactness, the countable collection $\{x \in U_i\}_{i = 1}^\infty$ is satisfiable in some elementary extension of $M$.
  Since $M$ is $\aleph_0$-saturated, this collection is satisfiable in $M$.
  Therefore, $\bigcap\limits_{i = 1}^\infty U_i \neq \varnothing$.
\end{proof}

%%%%%%%%%%%%%%%%%%%%%%%%%%%%%
\subsection{Main result}
\begin{theorem}\label{lem:size_bound}
    There exists a computable function $\size_{T, \mathcal{A}}(\ell, r)$ with input
    \begin{itemize}
        \item 
        a complete decidable theory $T$ (given by an algorithm for checking correctness of sentences);
        \item a total algorithm $\mathcal{A}$ over $T$;
        \item positive integers $\ell$ and $r$
    \end{itemize}
    that computes a number $N$ such that for every model $M$ of $T$, every $\mathbf{a} \in M^\ell$, and every string $\mathcal{S}$ in the alphabet of $\mathcal{A}$ of size at most $r$,
     the number of steps performed by $\mathcal{A}$ with input $\mathcal{S}$ and oracle $\mathcal{O}_M(\mathbf{a})$ does not exceed $N$.
\end{theorem}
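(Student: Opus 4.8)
The plan is to reduce Theorem~\ref{lem:size_bound} to Lemma~\ref{lem:lemvarphi} by showing that a suitable value of the query bound $N$ works uniformly over all models and all short inputs, and that we can detect such an $N$ algorithmically. First I would observe that there are only finitely many strings $\mathcal{S}$ in the alphabet of $\mathcal{A}$ of size at most $r$; the algorithm enumerates them as $\mathcal{S}_1, \ldots, \mathcal{S}_k$. For each fixed $\mathcal{S}_j$ and each candidate positive integer $N$, Lemma~\ref{lem:lemvarphi} produces a formula $\varphi_j^{(N)} := \varphi_{T, \mathcal{A}}(\ell, \mathcal{S}_j, N)$ such that, for every model $M$ of $T$ and every $\mathbf{a} \in M^\ell$, the sentence $\varphi_j^{(N)}(\mathbf{a})$ holds in $M$ iff $\mathcal{A}$ with input $\mathcal{S}_j$ and oracle $\mathcal{O}_M(\mathbf{a})$ terminates after at most $N$ oracle queries, and in that case the number of steps is bounded by $\mathcal{N}_j^{(N)} := \mathcal{N}_{T, \mathcal{A}}(\ell, \mathcal{S}_j, N)$. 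So the key is to find, for each $j$, an $N_j$ such that $\varphi_j^{(N_j)}$ is \emph{universally valid in $T$}, i.e.\ the sentence $\forall \mathbf{x}\; \varphi_j^{(N_j)}(\mathbf{x})$ belongs to $T$; then $\size_{T,\mathcal{A}}(\ell, r) := \max_{1 \leqslant j \leqslant k} \mathcal{N}_j^{(N_j)}$ is the desired bound.

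The existence of such an $N_j$ is where Lemma~\ref{lem:baer_category} enters. Fix $j$ and let $M$ be an $\aleph_0$-saturated model of $T$ (one exists for any consistent theory). Set $U_N := \{\mathbf{a} \in M^\ell \mid \lnot\varphi_j^{(N)}(\mathbf{a})\}$, the definable set of parameter tuples on which $\mathcal{A}$ needs more than $N$ queries. Since performing at most $N$ queries implies performing at most $N+1$ queries, we have $U_1 \supseteq U_2 \supseteq U_3 \supseteq \cdots$. Because $\mathcal{A}$ is a total algorithm over $T$, for every $\mathbf{a} \in M^\ell$ it terminates on $\mathcal{O}_M(\mathbf{a})$, hence performs only finitely many queries, so $\mathbf{a} \notin U_N$ for $N$ large enough; thus $\bigcap_{N=1}^\infty U_N = \varnothing$. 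By Lemma~\ref{lem:baer_category} there is $N_j$ with $U_{N_j} = \varnothing$, i.e.\ $\varphi_j^{(N_j)}(\mathbf{a})$ holds for every $\mathbf{a}$ in this $\aleph_0$-saturated model. Since $T$ is complete, a sentence true in one model is in $T$, so $\forall\mathbf{x}\;\varphi_j^{(N_j)}(\mathbf{x}) \in T$, which is exactly what we need; and this then holds in \emph{every} model of $T$, giving the uniform query bound $N_j$ for input $\mathcal{S}_j$.

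It remains to make the choice of $N_j$ effective. Here I would use decidability of $T$: the algorithm searches $N = 1, 2, 3, \ldots$, and for each $N$ computes $\varphi_j^{(N)}$ via Lemma~\ref{lem:lemvarphi} and asks the decision procedure for $T$ whether $\forall\mathbf{x}\;\varphi_j^{(N)}(\mathbf{x}) \in T$. The argument of the previous paragraph guarantees this search halts, so $N_j$ is computed; then $\mathcal{N}_j^{(N_j)}$ is returned by Lemma~\ref{lem:lemvarphi}. Taking the maximum over $j = 1, \ldots, k$ yields the claimed computable function $\size_{T,\mathcal{A}}(\ell, r)$, and correctness is immediate: for any model $M$, any $\mathbf{a}\in M^\ell$, and any input $\mathcal{S}$ of size at most $r$, say $\mathcal{S} = \mathcal{S}_j$, the sentence $\varphi_j^{(N_j)}(\mathbf{a})$ holds, so $\mathcal{A}$ terminates within $\mathcal{N}_j^{(N_j)} \leqslant \size_{T,\mathcal{A}}(\ell,r)$ steps.

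The main obstacle I anticipate is purely a matter of care in the bookkeeping around Lemma~\ref{lem:lemvarphi}: its guarantee on the step count $\mathcal{N}_{T,\mathcal{A}}(\ell,\mathcal{S},N)$ is conditional on the at-most-$N$-queries event, so one must be sure that once $\varphi_j^{(N_j)}$ is universally valid this conditional bound becomes an unconditional one. Everything else is an assembly of results already in hand; no genuinely new idea beyond combining the effective formula-extraction of Lemma~\ref{lem:lemvarphi}, the compactness/saturation stabilization of Lemma~\ref{lem:baer_category}, and decidability of $T$ is required.
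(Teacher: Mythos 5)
Your proposal is correct and follows essentially the same approach as the paper's own proof: iterate over the finitely many inputs $\mathcal{S}$ of length at most $r$, for each one search for an $N$ such that $\varphi_{T,\mathcal{A}}(\ell,\mathcal{S},N)$ is universally valid in $T$ (checked via decidability), prove termination of the search via the nested definable sets $U_N$ in an $\aleph_0$-saturated model together with totality of $\mathcal{A}$ and Lemma~\ref{lem:baer_category}, and then return the corresponding step bound $\mathcal{N}_{T,\mathcal{A}}(\ell,\mathcal{S},N)$, maximized over all such $\mathcal{S}$. The bookkeeping concern you raise at the end is handled exactly as you suspect: once $\varphi_{T,\mathcal{A}}(\ell,\mathcal{S},N)$ holds for all $\mathbf{a}$ in all models, the at-most-$N$-queries hypothesis in Lemma~\ref{lem:lemvarphi} is satisfied unconditionally, so the step bound applies unconditionally.
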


\begin{remark}\label{rem:bitsize}
    Let the intermediate result at step $n$ for a total algorithm $\mathcal{A}$ with given input and oracle be the content of all the cells of the tape that have been read by the Turing machine. 
    Since a Turing machine can read at most one cell at each step, the number of these cells cannot exceed $n$.
    Therefore, the intermediate result at step $n$ can be encoded using $n \log \ell$ bits, where $\ell$ is the cardinality of the alphabet of $\mathcal{A}$.
    In particular, if a binary alphabet is used, the bitsize of the intermediate result never exceeds the total number of steps in the algorithm.
\end{remark}

\begin{proof}
   We will describe an algorithm for computing $\size_{T, \mathcal{A}}(\ell, r)$.
   We fix $T$, $\mathcal{A}$, $\ell$, and $r$.
   We will consider $\mathcal{S}$ of length at most $r$ and describe how to compute a bound for  
   the number of steps given that the input is $\mathcal{S}$.
   Taking the maximum over all $\mathcal{S}$ of length at most $r$ (there are finitely many of them), we obtain $\size_{\mathcal{A}, T}(\ell, r)$.
   
   The algorithm will compute $\varphi_i := \varphi_{T, \mathcal{A}}(\ell, \mathcal{S}, i)$ for $i = 1, 2, \ldots$ using the algorithm from  Lemma~\ref{lem:lemvarphi}.
   For each $\varphi_i$, the algorithm will check whether the formula is equivalent to $\mathrm{True}$ in $T$ using the decidability of $T$.

   If this is true, the algorithm stops and returns $\mathcal{N}_{T, \mathcal{A}}(\ell, \mathcal{S}, i)$ (see Lemma~\ref{lem:lemvarphi}).
   It remains to show that the described procedure terminates in finitely many steps.
   Let $M$ be an $\aleph_0$-saturated model of $T$ {(it exists, for example, due to~\citep[Theorem~4.3.12]{Marker})}.
   For every $i = 1, 2, \ldots$, we introduce a definable set
   \[
     U_i := \{\mathbf{a} \in M^\ell \mid \varphi_i(\mathbf{a}) = \mathrm{False}\}.
   \]
   Notice that $U_i = \varnothing$ if and only if $(\varphi_i \iff \mathrm{True})$ in $T$.
   Then the definition of $\varphi_i$'s implies that $U_1 \supset U_2 \supset \ldots$.
   Assume that $\bigcap_{i = 1}^\infty U_i$ is not empty and choose an element $\mathbf{a}$ in it.
   Then $\mathcal{A}$ will not terminate in finitely many steps with input $\mathcal{S}$ and oracle $\mathcal{O}_M(\mathbf{a})$.
   Thus, $\bigcap_{i = 1}^\infty U_i = \varnothing$.
   Lemma~\ref{lem:baer_category} implies that there exists $N$ such that $U_N = \varnothing$.
   Then our algorithm will terminate after 
   checking whether~$\varphi_N$  is equivalent to True.
\end{proof}

%%%%%%%%%%%%%%%%%%%%%%%%%%%%%%%%%%%%%%%%%%%%%%%%%%%%%%%%%%%%%%%%%%%%%%%%%%%%

\section{Applications to differential algebra}\label{sec:diffalg}

{In this section, we will apply the results of Section~\ref{sec:bound_theories} to the theory of differentially closed fields with several commuting derivations.}

\subsection{Preparation}

\begin{notation}\label{not:dcf}
  Let $m$ be a positive integer.
  \begin{itemize}
      \item The language of partial differential rings with $m$ commuting derivation is denoted by
     $
      \mathcal{L}_{m} := \{+, -, \cdot, 0, 1, \partial_1, \ldots, \partial_m\}$.
      {We add a separate functional symbol for subtraction for convenience.}
      \item The theory of partial differentially closed fields with $m$ commuting derivations of characteristic zero is denoted by $\operatorname{DCF}_{m}$.
      Recall that $\operatorname{DCF}_m$ is complete~\cite[Corollary~3.1.9
      ]{mcgrail}  and, with this, is 
      decidable by~\cite[Lemma~2.2.8]{Marker} and~\cite[Lemma~3.1.2 and page~890]{mcgrail}.
  \end{itemize}
\end{notation}

\begin{notation}\label{not:pols}
  Let $m, n, h$ be positive integers and $k$ a differential field with a set of $m$ commuting derivations $\Delta=\{\partial_1, \ldots, \partial_m\}$.
  \begin{itemize}
      \item $\pol_k(m, n, h)$ denotes the space of all differential polynomials over $k$ in $n$ variables of order at most $h$ and degree at most $h$.
      \item The dimension of $\pol_k(m, n, h)$ (which does not depend on $k$) will be denoted by $\poldim(m, n, h)$.
  \end{itemize}
\end{notation}

\begin{notation}\label{not:Lm_polys}
Let $m$, $\ell$ and $n$ be positive integers.
\begin{itemize}
    \item Let $\mathcal{L}_m(x_1, \ldots, x_\ell)\{y_1, \ldots, y_n\}_\Delta$  denote the ring of differential polynomials in differential variables $y_1, \ldots, y_n$ with respect to $m$ derivations with the coefficients being terms in the language $\mathcal{L}_m$ in $x_1, \ldots, x_\ell$ (that is, elements of $\mathbb{Z}\{x_1, \ldots, x_\ell\}_\Delta)$. 
       
    This is a computable differential ring with $m$ commuting derivations. 
    In what follows, we will assume that the algorithms use dense representation to store these polynomials (that is, store all the coefficients up to certain order and certain degree).
    
    \item Let $k$ be a differential field with $m$ derivations and $\mathbf{a} \in k^\ell$.
    Then, for $T \in \mathcal{L}_m(x_1, \ldots, x_\ell)\{y_1, \ldots, y_n\}_\Delta$, we define $T(\mathbf{a}) \in k\{y_1, \ldots, y_n\}_\Delta$ to be the result of evaluating the coefficients of $T$ at $\mathbf{a}$.
\end{itemize}
\end{notation}

\begin{definition}\label{def:diffranking}
  A {\em differential ranking}  for $k\{z_1,\ldots,z_n\}_\Delta$ is a total order $>$ on $Z := \{\theta z_i\mid \theta\in \Theta_\Delta,\, 1\leqslant i\leqslant n\}$ satisfying, for all $i$, $1\leqslant i\leqslant m$:
  \begin{itemize}
    \item for all $x \in Z$, $\partial_i(x) > x$ and
    \item for all $x, y \in Z$, if $x >y$, then $\partial_i(x) > \partial_i(y)$.
  \end{itemize}
\end{definition}

\begin{notation}\label{not:rank}
For a $\Delta$-field $k$ and
  $f \in k\{z_1,\ldots,z_n\}_\Delta \backslash k$ and differential ranking $>$,
  \begin{itemize}
    \item $\lead(f)$ is the element of $Z$ of the highest rank appearing in $f$. 
    \item The leading coefficient of $f$ considered as a polynomial in $\lead(f)$ is denoted by $\init(f)$ and called the initial of $f$. 
    \item The separant of $f$ is $\frac{\partial f}{\partial\lead(f)}$.
    \item The rank of $f$ is $\rank(f) = \lead(f)^{\deg_{\lead(f)}f}$. 
    The ranks are compared first with respect to $\lead$, and in the case of equality with respect to $\deg$.
    \item For $S \subset k\{z_1,\ldots,z_n\}_\Delta \backslash k$, the set of initials and separants of $S$ is denoted by $H_S$.
  \end{itemize}
\end{notation}

\begin{remark}[Defining a ranking]\label{rem:def_rank}
  In general, there are uncountable many differential rankings already for $m = 2$ and $n = 1$.
  However, \cite[Theorem~29]{rankings} implies that any differential ranking can be defined by $m(m + 1)n$ real numbers together with $n^2$ integers not exceeding $m$ and one permutation on $n$ elements.
  We define a function $\RK_{m, n}(\bm{\alpha}, \mathcal{S})$ taking as input a tuple $\bm{\alpha}$ of $m(m + 1)n$ real numbers and a binary string $\mathcal{S}$ (of length at most $(n^2 + n)\log_2(\max(n, m))$) encoding the integers and the permutation and returning the corresponding binary predicate on the derivatives as in~\cite[Definition~28]{rankings}.
  The relevant properties of this encoding for us will be that, for fixed $\mathcal{S}$:
  \begin{enumerate}
      \item the statement that $\RK_{m, n}(\bm{\alpha}, \mathcal{S})$ defines a ranking is a first-order formula in $\bm{\alpha}$ in the language of ordered fields;
      \item for every two derivatives $\theta_1 z_i$ and $\theta_2 z_j$, the fact that $\theta_1 z_i < \theta_2 z_j$ with respect to $\RK_{m, n}(\bm{\alpha}, \mathcal{S})$ is also a first-order formula in $\bm{\alpha}$ in the language of ordered fields.
  \end{enumerate}
\end{remark}

\begin{definition}[Characteristic sets]
  \begin{itemize}
  \item[]
    \item For $f, g \in k\{z_1,\ldots,z_n\}_\Delta \backslash k$, $f$ is said to be reduced w.r.t. $g$ if no proper derivative of $\lead(g)$ appears in $f$ and $\deg_{\lead(g)}f <\deg_{\lead(g)}g$.
    \item 
    A subset $\mathcal{A}\subset k\{z_1,\ldots,z_n\}_\Delta \backslash k$
    is called {\em autoreduced} if, for all $p \in \mathcal{A}$, $p$ is reduced w.r.t. every  element of $\mathcal A\setminus \{p\}$. 
    One can show that every autoreduced set is finite \cite[Section~I.9]{Kol}. 
    \item Let $\mathcal{A}=A_1<\ldots<A_r$ and $\mathcal{B} = B_1<\ldots<B_s$ be autoreduced sets ordered by their ranks (see Notation~\ref{not:rank}). We say that $\mathcal{A} < \mathcal{B}$ if
    \begin{itemize}
      \item $r > s$ and $\rank(A_i)=\rank(B_i)$, $1\leqslant i\leqslant s$, or
      \item there exists $q$ such that $\rank(A_q) <\rank(B_q)$ and, for all $i$, $1\leqslant i< q$, $\rank(A_i)=\rank(B_i)$.
    \end{itemize}
    \item An autoreduced subset of the smallest rank of a differential ideal $I\subset k\{z_1,\ldots,z_n\}_\Delta$
    is called a {\em characteristic set} of $I$. One can show that every non-zero differential ideal in $k\{z_1,\ldots,z_n\}_\Delta$ has a characteristic set.
    \item A radical differential ideal $I$ of $k\{z_1,\ldots,z_n\}_\Delta$ is said to be {\em characterizable} if $I$ has a characteristic set $C$ such that $I=\langle C\rangle^{(\infty)}:H_C^\infty$.
  \end{itemize}
\end{definition}

 The Rosenfeld-Gr\"obner algorithm~\cite[Theorem~9]{BLOP09} takes as input
a finite set $F$ of differential polynomials and a differential ranking and outputs autoreduced sets $\mathcal{C}_1,\ldots,\mathcal{C}_N$ such that
\begin{equation}\label{eq:RG}
\sqrt{\langle F\rangle^{(\infty)}} = \bigcap_{i=1}^N\langle \mathcal{C}_i\rangle^{(\infty)}:H_{\mathcal{C}_i}^\infty
\end{equation}
and that, for each $i$, $1\leqslant i \leqslant N$, $\mathcal{C}_i$ is a characteristic set of $\langle \mathcal{C}_i\rangle^{(\infty)}:H_{\mathcal{C}_i}^\infty$. The representation~\eqref{eq:RG} can be used, for example, for membership testing,  
 estimating
the number of irreducible components (used in Theorem~\ref{thm:components}) or the Kolchin polynomial (used in Section~\ref{sec:delayPDE}) of a differential-algebraic variety.

%%%%%%%%%%%%%%%%%%%%%%%%%%%%%%%%%%%%%%%%%%

 With the next Proposition~\ref{prop:rg_gen} we express how we will call the Rosenfeld-Gr\"{o}bner algorithm.   
This algorithm depends on the choice of a differential ranking.  
The reader may wish to make one such choice
once and for all, thereby ignoring the potential ambiguity. However, since such a choice may affect the size of the output and the efficiency of 
any given implementation of the algorithm, one may prefer to allow for these other orderings. 

We will express this dependence by seeing the algorithm as a total algorithm relative to the two-sorted theory $\DCF_m\oplus \RCF$ which is a disjoint union of $\DCF_m$ and the  complete decidable theory with quantifier elimination of real closed fields $\RCF$~\cite[Theorem 3.3.15 and Corollary 3.3.16]{Marker}.
Then we will use the characterization of differential rankings via real numbers from Remark~\ref{rem:def_rank}.

\begin{lemma}
  Theory $\DCF_m\oplus \RCF$ is  decidable and complete.
\end{lemma}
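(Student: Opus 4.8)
The plan is to deduce both decidability and completeness of the two-sorted theory $\DCF_m \oplus \RCF$ from the corresponding properties of the two factors, which are already recorded in the excerpt: $\DCF_m$ is complete by~\cite[Corollary~3.1.9]{mcgrail} and decidable by~\cite[Lemma~2.2.8]{Marker} together with~\cite[Lemma~3.1.2 and page~890]{mcgrail}, and $\RCF$ is complete and decidable by~\cite[Theorem 3.3.15 and Corollary 3.3.16]{Marker}. The point is just that a disjoint union of two theories in disjoint (multi-sorted) signatures inherits these properties; I would state this as a small general fact and then apply it.

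First I would set up the two-sorted language carefully: the signature of $\DCF_m \oplus \RCF$ has two sorts, say sort $D$ carrying the language $\mathcal{L}_m$ and sort $R$ carrying the language of ordered rings, with no symbols relating the two sorts, and the axioms are exactly the axioms of $\DCF_m$ relativized to sort $D$ together with the axioms of $\RCF$ relativized to sort $R$. A model of $\DCF_m \oplus \RCF$ is then precisely a pair $(K, L)$ where $K \models \DCF_m$ and $L \models \RCF$. For \emph{completeness} I would argue as follows: let $\Phi$ be any sentence in the two-sorted language. Because the two sorts share no symbols, $\Phi$ is logically equivalent (effectively — this is the syntactic normal-form step) to a Boolean combination $\bigvee_j (\alpha_j \wedge \beta_j)$ where each $\alpha_j$ is a sentence purely in sort $D$ (language $\mathcal{L}_m$) and each $\beta_j$ is a sentence purely in sort $R$ (language of ordered rings); quantifiers over one sort cannot interact with the other. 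Since $\DCF_m$ decides each $\alpha_j$ and $\RCF$ decides each $\beta_j$, the whole Boolean combination, hence $\Phi$, is decided by $\DCF_m \oplus \RCF$. The same normal form immediately gives \emph{decidability}: given $\Phi$, compute the normal form $\bigvee_j(\alpha_j \wedge \beta_j)$, run the decision procedure for $\DCF_m$ on each $\alpha_j$ and the decision procedure for $\RCF$ on each $\beta_j$, and evaluate the resulting Boolean combination of truth values.

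The step that needs the most care — and the one I would call the main obstacle, though it is standard — is the claim that every two-sorted sentence is effectively equivalent to a Boolean combination of single-sort sentences. This is proved by induction on the structure of $\Phi$: atomic formulas already lie in a single sort; conjunction, disjunction, and negation are handled by distributing and pushing negations inward; the only real content is the quantifier case, where one shows that $\exists x^D\, \big(\bigvee_j (\alpha_j(x^D) \wedge \beta_j)\big)$, with $\beta_j$ not mentioning $x^D$, is equivalent to $\bigvee_j \big((\exists x^D\, \alpha_j(x^D)) \wedge \beta_j\big)$, and symmetrically for quantifiers of sort $R$. (Free variables of the other sort are carried along passively; one works with formulas in the relevant normal form rather than sentences during the induction and specializes at the end.) Alternatively — and this is perhaps the cleanest write-up — I would avoid the syntactic normal form entirely and instead note that the class of models of $\DCF_m \oplus \RCF$ is, up to isomorphism, the class of pairs $(K,L)$ with $K \models \DCF_m$, $L \models \RCF$, and that any two such pairs $(K_1,L_1)$ and $(K_2,L_2)$ are elementarily equivalent because $K_1 \equiv K_2$ (completeness of $\DCF_m$) and $L_1 \equiv L_2$ (completeness of $\RCF$): a back-and-forth / Ehrenfeucht–Fraïssé argument in the disjoint union reduces to playing the games in the two factors independently. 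Completeness of $\DCF_m \oplus \RCF$ follows, and since the theory is moreover recursively axiomatized (being the union of two recursive axiom sets), completeness plus recursive axiomatizability yields decidability by~\cite[Lemma~2.2.8]{Marker}. I would present this second route, as it is shorter and reuses exactly the lemmas already cited in the excerpt.
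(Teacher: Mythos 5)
Your proposal is correct, and your preferred write-up (the semantic Route~2) is genuinely different from the paper's. The paper argues syntactically: it builds a quantifier-elimination algorithm for $\DCF_m \oplus \RCF$ out of the QE algorithms for $\DCF_m$ and $\RCF$, using the observation that, after putting the matrix in disjunctive normal form, an $\exists x$ of a given sort only binds literals of that sort; a sentence is then effectively reduced to $\mathrm{True}$ or $\mathrm{False}$, which gives decidability and (implicitly) completeness at once. Your Route~2 instead shows that any two models $(K_1,L_1)$ and $(K_2,L_2)$ are elementarily equivalent by combining Ehrenfeucht--Fra\"iss\'e strategies in the two sorts (using $K_1\equiv K_2$ and $L_1\equiv L_2$, i.e.\ completeness of each factor), and then obtains decidability from completeness plus recursive axiomatizability via Marker's lemma. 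Route~2 is shorter and slightly more robust: it never invokes QE for the factors, only their completeness and decidability, whereas the paper leans on the known QE algorithms for $\DCF_m$ and $\RCF$. Your Route~1 --- normalizing a two-sorted sentence to a Boolean combination of single-sort sentences via the same quantifier-commuting observation $\exists x^{D}(\alpha(x^{D})\wedge\beta)\iff(\exists x^{D}\alpha(x^{D}))\wedge\beta$ --- is essentially the syntactic core of the paper's proof with the final per-sort QE step replaced by a direct appeal to per-sort decidability, so it is the closest in spirit to what the paper actually does; either route would be acceptable here.
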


\begin{proof}
  In order to prove the completeness and decidability, we will prove that there is an algorithm for quantifier elimination in 
  $\DCF_m\oplus \RCF$ based on the
  existence of such algorithms for 
  $\DCF_m$ (follows from decidability, see Notation~\ref{not:dcf}, and quantifier elimination~\cite[Theorem 3.1.7]{mcgrail}) and $\RCF$.
  It is sufficient to perform 
  quantifier elimination for a formula of the form
  \[
    \exists x \in S\colon L_1\wedge \ldots \wedge L_N,
  \]
  where $S$ is one of the sorts (corresponding to $\DCF_m$ or $\RCF$) and $L_1, \ldots, L_N$ are literals. (See~\cite[Lemma 3.1.5]{Marker}.) 
  By reordering $L_1, \ldots, L_N$ if necessary, we will further assume that there exists $N_0$ such that $L_1, \ldots, L_{N_0}$ are in the signature of the sort $S$ and $L_{N_0 + 1}, \ldots, L_N$ are in the signature of the other sort.
  Then 
  \[
    \left(\exists x \in S\colon L_1\wedge \ldots L_N\right) \iff \left(\exists x \in S \colon L_1\wedge \ldots \wedge L_{N_0}\right) \wedge \left( L_{N_0 + 1}\wedge \ldots \wedge L_{N}\right),
  \]
  and, for $\exists x \in S \colon L_1\wedge \ldots \wedge L_{N_0}$, the algorithm for the corresponding sort $S$ can compute an equivalent quantifier-free formula.

The resulting theory is decidable because the correctness of each sentence can be checked by performing quantifier elimination after which the formula will become just true/false.
\end{proof}

\begin{proposition}\label{prop:rg_gen}
  There is a computable function that, for a given  
  positive integer $m$,
  computes a total algorithm $\mathcal{RG}_{m,}$ over 
 $\DCF_m \oplus \RCF$
  such that, for every differential field $k$ with $m$ derivations and $\mathbf{a} \in k^\ell$ and  any 
  $\mathbf{b} \in \mathbb{R}^s$, the input-output specification of $\mathcal{RG}_{m}$ with oracle $\mathcal{O}_{k \oplus \mathbb{R}}(\mathbf{a}, \mathbf{b})$ is the following:
  \begin{description}
    \item[Input] finite subsets $A$ and $S$ of $\mathcal{L}_m(x_1, \ldots, x_\ell)\{y_1, \ldots, y_n\}_\Delta$ and a binary string $\mathcal{S}$;
    \item[Output] if $\RK_{m, n}(\mathbf{b}, \mathcal{S})$ (see Remark~\ref{rem:def_rank}) defines a differential ranking, return a list of tuples $C_1, \ldots, C_N$ from $\mathcal{L}_m(x_1, \ldots, x_\ell)\{y_1, \ldots, y_n\}_\Delta$  such that 
    \[
    C_1(\mathbf{a}), \ldots, C_N(\mathbf{a})
    \] 
    is the output of the Rosenfeld-Gr\"obner algorithm~\cite[Theorem~9]{BLOP09} with input $(A(\mathbf{a}), S(\mathbf{a}))$  with respect to the ranking $\RK_{m, n}(\mathbf{b}, \mathcal{S})$. Otherwise, return $\varnothing$.
  \end{description}
\end{proposition}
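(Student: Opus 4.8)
The plan is to run the classical Rosenfeld--Gr\"obner algorithm of~\cite[Theorem~9]{BLOP09} ``symbolically'', keeping the elements of the ground differential field that occur during a run as elements of $\Quot(\mathbb{Z}\{x_1,\ldots,x_\ell\}_\Delta)$ and delegating to the oracle $\mathcal{O}_{k\oplus\mathbb{R}}(\mathbf{a},\mathbf{b})$ exactly those decisions that depend on the field $k$ or on the ranking. The first step is to inspect~\cite{BLOP09} and record that the only operations it performs on elements of the ground field are the ring operations $+,-,\cdot$, the derivations $\partial_1,\ldots,\partial_m$, inversion of an element that has just been certified nonzero, zero testing, and --- as a separate primitive --- the comparison of two derivatives $\theta_1 y_i$, $\theta_2 y_j$ with respect to the input ranking. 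Everything else (Ritt reduction, case splitting, the Gr\"obner basis subroutines, formation and comparison of autoreduced sets) is assembled from these primitives together with manifestly computable combinatorial manipulations of exponent vectors and derivative operators.

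Granting this, $\mathcal{RG}_m$ is built as follows. It represents every ground field element that arises as a formal fraction $p/q$ with $p,q\in\mathbb{Z}\{x_1,\ldots,x_\ell\}_\Delta$ and performs the ring operations, inversions and the $\partial_i$ symbolically on such fractions, which is possible because $\Quot(\mathbb{Z}\{x_1,\ldots,x_\ell\}_\Delta)$ is a computable differential field; it maintains the invariant that every denominator produced is a polynomial not vanishing at $\mathbf{a}$, which holds because inversion is applied only to the result of a \emph{successful} nonzero test. Differential polynomials over $k$ are stored, after clearing denominators, as elements of $\mathcal{L}_m(x_1,\ldots,x_\ell)\{y_1,\ldots,y_n\}_\Delta$, mirroring the normalization the classical algorithm performs over $k$, so that evaluation at $\mathbf{a}$ recovers the corresponding object over $k$ (Notation~\ref{not:Lm_polys}). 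Before simulating anything, $\mathcal{RG}_m$ queries the $\RCF$ part of the oracle with the first-order formula in $\mathbf{b}$ asserting that $\RK_{m,n}(\mathbf{b},\mathcal{S})$ defines a ranking (Remark~\ref{rem:def_rank}(1)); if this is false it returns $\varnothing$. A zero test of a ground field element $p/q$ is delegated by querying the $\DCF_m$ part with the atomic $\mathcal{L}_m$-formula $p(x_1,\ldots,x_\ell)=0$ (note $p$ is an $\mathcal{L}_m$-term in $x_1,\ldots,x_\ell$), whose value at $(\mathbf{a},\mathbf{b})$ equals the truth of $(p/q)(\mathbf{a})=0$ in $k$ since $q(\mathbf{a})\neq 0$; a comparison $\theta_1 y_i<\theta_2 y_j$ is delegated by querying the $\RCF$ part with the first-order formula in $\mathbf{b}$ from Remark~\ref{rem:def_rank}(2). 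All these queries are single-sorted, so their answers depend only on $\mathbf{a}$, resp.\ only on $\mathbf{b}$, and coincide with what the classical algorithm sees when run over $k$ with the ranking $\RK_{m,n}(\mathbf{b},\mathcal{S})$.

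With the construction in place, two things remain. For correctness I would prove by simultaneous induction on the steps of the symbolic and the classical run (the latter on $(A(\mathbf{a}),S(\mathbf{a}))$ over $k$ with $\RK_{m,n}(\mathbf{b},\mathcal{S})$, when that is a ranking) that the two runs take the same branches and that each symbolic field element evaluates at $\mathbf{a}$ to the true value of its classical counterpart; hence the returned $C_1,\ldots,C_N$ satisfy that $C_1(\mathbf{a}),\ldots,C_N(\mathbf{a})$ is exactly the output of~\cite[Theorem~9]{BLOP09}, and when $\RK_{m,n}(\mathbf{b},\mathcal{S})$ is not a ranking the output is $\varnothing$, as required. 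For totality over $\DCF_m\oplus\RCF$: any model is $k\oplus R$ with $k\models\DCF_m$, $R\models\RCF$, and on every such oracle and every input $\mathcal{RG}_m$ reads its finite input, performs the ranking-validity query, and then either halts at once or simulates the classical algorithm, which terminates by~\cite[Theorem~9]{BLOP09} with each step simulated in finitely many steps; computability of $m\mapsto\mathcal{RG}_m$ is clear since $m$ enters only through the finite signature $\mathcal{L}_m$, differential arithmetic with $m$ derivations, and the encoding $\RK_{m,n}$.

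The step I expect to be the main obstacle is making the symbolic run a \emph{faithful} copy of the classical one rather than merely an ideal-theoretically equivalent computation: one has to push ground field elements through the divisions occurring in the Gr\"obner subroutines without ever dividing by something that vanishes at $\mathbf{a}$, and clear denominators in the output in a way that commutes with evaluation at $\mathbf{a}$ (e.g.\ by multiplying through by the product of the denominators), so that the $C_i(\mathbf{a})$ equal the classical output on the nose. A secondary point is to double-check that every ranking-dependent step of~\cite{BLOP09} really reduces to the two forms covered by Remark~\ref{rem:def_rank}.
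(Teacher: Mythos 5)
Your proposal follows essentially the same strategy as the paper's proof: run the Rosenfeld--Gr\"obner algorithm symbolically, keeping ground-field elements as symbolic expressions in $x_1,\ldots,x_\ell$, and delegate zero tests to the $\DCF_m$-sort of the oracle and ranking comparisons to the $\RCF$-sort, with totality inherited from the termination of the classical algorithm via \cite[Theorem~5]{BLOP09}. That is precisely the paper's argument.

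The one place you diverge is that you assume the algorithm performs inversion of ground-field elements, and therefore represent elements as fractions in $\Quot(\mathbb{Z}\{x_1,\ldots,x_\ell\}_\Delta)$. This is what creates the obstacle you flag at the end: after ``clearing denominators,'' evaluation at $\mathbf{a}$ yields a nonzero scalar multiple of the classical output, not the output on the nose, so the specification $C_i(\mathbf{a}) = (\text{RG output})_i$ is not met as stated. The paper avoids this entirely by relying on the fact (attributed to \cite[Theorem~9]{BLOP09}) that the only operations performed on ground-field elements are $+,-,\cdot$, the derivations, and zero testing --- no inversion. This is also why Notation~\ref{not:Lm_polys} makes the coefficients $\mathcal{L}_m$-terms, i.e.\ elements of $\mathbb{Z}\{x_1,\ldots,x_\ell\}_\Delta$ rather than its fraction field: the language $\mathcal{L}_m$ deliberately lacks a division symbol. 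With pure polynomial representatives the symbolic run mirrors the classical one exactly and the ``faithfulness'' issue you raise simply does not arise. So the gap is minor and self-inflicted: drop the inversion primitive, store elements as $\mathcal{L}_m$-terms rather than fractions, and the denominator-clearing concern disappears along with the need to verify it.
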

  
\begin{proof}
   \cite[Theorem~9]{BLOP09} states that the only operations performed by the Rosenfeld-Gr\"obner algorithm with the elements of the ground differential field are arithmetic operations, differentiation, and zero testing.
   {Algorithm $\mathcal{RG}_{m}$ is constructed to work exactly} in the same way as the Rosenfeld-Gr\"obner algorithm with the only difference that  
   the elements of the ground differential field will be represented as $L(\mathbf{a})$, where $L \in \mathcal{L}_m(x_1, \ldots, x_\ell)\{y_1, \ldots, y_n\}_\Delta$.
   The arithmetic operations and differentiations can be performed with $L$, zero testing can be performed using the $k$-component of the oracle, and the queries to the ranking can be performed using the $\mathbb{R}$-component of the oracle, 
   so $\mathcal{RG}$ will be able to perform the same computations as the Rosenfeld-Gr\"obner algorithm.
   
   Due to~\cite[Theorem~5]{BLOP09},
   the Rosenfeld-Gr\"obner algorithm is guaranteed to terminate on every input.
   Hence, the same is true for $\mathcal{RG}_{m}$.
\end{proof}

%%%%%%%%%%%%%%%%%%%%%%%%%%%%%%%%%%%%

\subsection{Bounds}
\begin{theorem}[Upper bound for Rosenfeld-Gr\"obner algorithm]\label{thm:RG}
    There exists a computable function $\operatorname{RG}(m, n, \ell)$ such that, for every
    differential field $k$ with $m$ derivations and subsets
    $A, S \subset \pol_k(m, n, n)$ with $|A|, |S| \leqslant \ell$, and every differential ranking,
    the Rosenfeld-Gr\"obner algorithm~\cite[Theorem~9]{BLOP09} on $A$ and $S$ will produce at most $\operatorname{RG}(m, n, \ell)$ components with all the orders and degrees  of the differential polynomials occurring in the algorithm not exceeding $\operatorname{RG}(m, n, \ell)$.
\end{theorem}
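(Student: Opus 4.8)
The plan is to deduce Theorem~\ref{thm:RG} from the abstract bound of Theorem~\ref{lem:size_bound} applied to the total algorithm $\mathcal{RG}_m$ over $\DCF_m \oplus \RCF$ furnished by Proposition~\ref{prop:rg_gen}. First I would fix $m$, $n$, $\ell$ and invoke Proposition~\ref{prop:rg_gen} to obtain the total algorithm $\mathcal{RG}_m$; I would then feed $T = \DCF_m \oplus \RCF$, the algorithm $\mathcal{RG}_m$, the arity $\ell$, and a bound $r = r(m,n,\ell)$ on the length of the input string into $\size_{T, \mathcal{RG}_m}(\ell, r)$ to get a number $N$ bounding the number of steps of $\mathcal{RG}_m$ on \emph{any} input of length at most $r$, over \emph{any} model, for \emph{any} oracle. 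The key translation step is that a differential field $k$ with $m$ derivations together with a tuple $\mathbf a \in k^\ell$ and a real tuple $\mathbf b \in \mathbb{R}^s$ gives an oracle $\mathcal O_{k \oplus \mathbb R}(\mathbf a, \mathbf b)$ for a model of $T$ (after passing to a differential closure of $k$, which does not change zero testing of differential polynomials over $k$, and using that $\mathbb R \models \RCF$); so the generic step bound $N$ applies.

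Next I would argue that an arbitrary run of the Rosenfeld--Gr\"obner algorithm in the statement of Theorem~\ref{thm:RG} — on $A, S \subset \pol_k(m,n,n)$ with $|A|, |S| \le \ell$ and an arbitrary differential ranking — is realized as a run of $\mathcal{RG}_m$ with a suitable oracle and a suitable input string of length at most $r(m,n,\ell)$. For this I need two encodings. For the ranking: by Remark~\ref{rem:def_rank} (citing \cite[Theorem~29]{rankings}) every differential ranking on $n$ variables with $m$ derivations is of the form $\RK_{m,n}(\mathbf b, \mathcal{S})$ for some $\mathbf b \in \mathbb{R}^{m(m+1)n}$ and some binary string $\mathcal S$ of length at most $(n^2+n)\log_2(\max(n,m))$; the real parameters go into the oracle's $\RCF$ sort, and $\mathcal S$ is part of the input string. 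For the polynomials $A$ and $S$: since they have order at most $n$ and degree at most $n$, each is a $k$-linear combination of the $\poldim(m,n,n)$ monomials in $\pol_k(m,n,n)$, so in dense representation $A$ and $S$ are tuples over $\mathcal{L}_m(x_1,\dots,x_\ell)\{y_1,\dots,y_n\}_\Delta$ whose coefficients are the $\ell$ oracle variables $x_1, \dots, x_\ell$ themselves (take $\mathbf a$ to be the full list of coefficients appearing in $A$ and $S$, so $\ell = 2\ell \cdot \poldim(m,n,n)$ suffices, or more carefully collect them), giving input subsets of $\mathcal{L}_m(x_1,\dots,x_\ell)\{y_1,\dots,y_n\}_\Delta$ of size at most $2\ell$; the total length of the encoded input string is then bounded by a computable function $r(m,n,\ell)$. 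Then $\mathcal{RG}_m$ on this input with oracle $\mathcal O_{k \oplus \mathbb R}(\mathbf a, \mathbf b)$ runs in at most $N$ steps and, by Proposition~\ref{prop:rg_gen}, its output evaluates to exactly the Rosenfeld--Gr\"obner output on $(A, S)$ with the given ranking.

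Finally I would conclude by bounding the two quantities in the statement by $N$ (or a computable function of $N$). The number of components $N_{\text{comp}}$ in the output is at most the length of the output tape, which by Remark~\ref{rem:bitsize} is at most the number of steps $N$; similarly, every differential polynomial occurring during the computation lives on the part of the tape that has been read, so by Remark~\ref{rem:bitsize} its encoding has bitsize at most $N$, and in the dense representation over $\mathcal{L}_m(x_1,\dots,x_\ell)\{y_1,\dots,y_n\}_\Delta$ this forces both its order and its degree to be at most $N$ (a polynomial of order $d$ in $n$ variables already needs more than $d$ cells to store in dense form, and likewise for degree). Setting $\operatorname{RG}(m,n,\ell) := N = \size_{\DCF_m \oplus \RCF,\, \mathcal{RG}_m}(\ell,\, r(m,n,\ell))$ (with $\ell$ adjusted as above) gives the desired computable function. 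The main obstacle I anticipate is purely bookkeeping rather than conceptual: carefully setting up the dense-representation encoding of $A$, $S$, and the ranking string so that (i) the input length is genuinely bounded by a computable $r(m,n,\ell)$ independent of $k$, and (ii) the ``step count bounds order and degree'' argument via Remark~\ref{rem:bitsize} is airtight — in particular making sure the $\ell$ in the oracle arity is taken large enough to hold all coefficients of $A$ and $S$ while still being a computable function of the original $m,n,\ell$.
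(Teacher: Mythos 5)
Your proposal is correct and takes essentially the same route as the paper's own proof: fix $m,n,\ell$, replace the oracle arity by $N=2\ell\,\poldim(m,n,n)$ (the paper's $N$), lift $A,S$ to templates over $\mathcal{L}_m(x_1,\ldots,x_N)\{y_1,\ldots,y_n\}_\Delta$ with single-variable coefficients, encode the ranking as $(\mathbf b,\mathcal S)$ via Remark~\ref{rem:def_rank}, apply Theorem~\ref{lem:size_bound} to $\mathcal{RG}_m$ over $\DCF_m\oplus\RCF$, and convert the step bound into a bound on the number of components, orders, and degrees via Remark~\ref{rem:bitsize} and the dense representation. The one detail you make explicit that the paper leaves tacit — that one must pass to a differential closure of $k$ to obtain a genuine $T$-oracle, and that this does not affect the (quantifier-free) zero-testing queries actually issued by the algorithm — is a correct and harmless refinement rather than a departure.
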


\begin{proof}
   We fix integers $m$, $n$, and $\ell$ and compute the total algorithm $\mathcal{RG}_{m}$ over $\DCF_{m} \oplus \RCF$ 
   from Proposition~\ref{prop:rg_gen}.
   Let $\mathbf{a}$ be the set of all the coefficients of $A$ and $S$.
   Then $|\mathbf{a}| \leqslant N := 2\ell \poldim(m, n, n)$.
   The sets $A$ and $S$ can be presented as evaluations of subsets $\widetilde{A}, \widetilde{S} \subset \mathcal{L}_m(x_1, \ldots, x_N)\{y_1, \ldots, y_n\}_\Delta$ at $\mathbf{a}$ such that the orders and degrees of $\widetilde{A}, \widetilde{S}$ in $y_1, \ldots, y_n$ do not exceed $n$ and every coefficient is a single variable $x_i$.
   Let the ranking be defined as $\RK(\mathbf{b}, \mathcal{S})$ (see Remark~\ref{rem:def_rank}), where $\mathbf{b}$ is a tuple of $m(m + 1)n$ real numbers and $\mathcal{S}$ is a binary string of length at most $(n^2 + n) \log_2 \max(n, m)$.
   Then the the tuple $(\widetilde{A}, \widetilde{S}, \mathcal{S})$ can be encoded as a binary string of the length bounded by a computable function $S(m, n, N)$.
   
   We run $\mathcal{RG}_{m}$ with the input $\mathcal{I} = (\widetilde{A}, \widetilde{S}, \mathcal{S})$ and oracle $\mathcal{O}(\mathbf{a}, \mathbf{b})$.
   Theorem~\ref{lem:size_bound} implies that the number of steps and, consequently, the bitsize of of all the intermediate results (see Remark~\ref{rem:bitsize})
   will not exceed $\size_{\mathcal{RG}_{m}, \DCF_m \oplus \RCF}(N, S(m, n, N))$.
   
   Since each component takes at least one bit, a polynomial of degree $d$ or order $d$ has at least $d$ coefficients (due to the dense representation of the polynomials, see Notation~\ref{not:pols}) requiring at least one bit each, the number of components, the degrees and orders do not exceed the bitsize of the intermediate results. 
   Therefore, we can set $\operatorname{RG}(m, n, \ell) = \size_{\mathcal{RG}_{m}, \DCF_{m} \oplus \RCF}(N, S(m, n, N))$.
\end{proof}

\begin{corollary}\label{cor:charset}
   There exists a computable function $\operatorname{CharSet}(m, n, \ell)$ such that, for every
    computable differential field $k$ with $m$ derivations and subsets
    $A, S \subset \pol_k(m, n, n)$ with $|A|, |S| \leqslant \ell$, and every differential ranking,
    the ideal  $\sqrt{\langle A\rangle^{(\infty)} \colon S^{\infty}}$ can be written as an intersection of at most $\operatorname{CharSet}(m, n, \ell)$ characterizable differential ideals defined by their characteristic sets with respect to the ranking of order and degree not exceeding $\operatorname{CharSet}(m, n, \ell)$.
\end{corollary}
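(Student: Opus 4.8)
The plan is to read this off directly from Theorem~\ref{thm:RG}. First I would fix $m$, $n$, and $\ell$ and simply set $\operatorname{CharSet}(m, n, \ell) := \operatorname{RG}(m, n, \ell)$, where $\operatorname{RG}$ is the computable function supplied by Theorem~\ref{thm:RG}. Given a computable differential field $k$ with $m$ derivations, subsets $A, S \subset \pol_k(m, n, n)$ with $|A|, |S| \leqslant \ell$, and a differential ranking, run the Rosenfeld-Gr\"obner algorithm on $A$ and $S$ with respect to this ranking. By \cite[Theorem~9]{BLOP09} (of which~\eqref{eq:RG} is the special case $S = \varnothing$), its output is a list of autoreduced sets $\mathcal{C}_1, \ldots, \mathcal{C}_N$ satisfying
\[
  \sqrt{\langle A\rangle^{(\infty)} \colon S^\infty} = \bigcap_{i = 1}^N \langle \mathcal{C}_i\rangle^{(\infty)} \colon H_{\mathcal{C}_i}^\infty,
\]
in which each $\mathcal{C}_i$ is a characteristic set of the ideal $\langle \mathcal{C}_i\rangle^{(\infty)} \colon H_{\mathcal{C}_i}^\infty$; by definition, each of these $N$ ideals is then characterizable and is presented by its characteristic set $\mathcal{C}_i$.

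It then remains to extract the quantitative part. Theorem~\ref{thm:RG}, applied to the same data and ranking, guarantees both $N \leqslant \operatorname{RG}(m, n, \ell)$ and that all orders and degrees of the differential polynomials occurring during the run of the algorithm do not exceed $\operatorname{RG}(m, n, \ell)$; in particular this bounds the orders and degrees of the polynomials appearing in $\mathcal{C}_1, \ldots, \mathcal{C}_N$, since these polynomials occur in the algorithm as (part of) its output. With $\operatorname{CharSet}(m, n, \ell) = \operatorname{RG}(m, n, \ell)$ this is exactly the claim.

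I do not expect a genuine obstacle here: all of the substance is already in Theorem~\ref{thm:RG} (hence in Theorem~\ref{lem:size_bound} and Proposition~\ref{prop:rg_gen}), and what is left is bookkeeping --- checking that the output components of the Rosenfeld-Gr\"obner algorithm are characteristic sets of characterizable ideals whose intersection is the required radical differential ideal, which is precisely \cite[Theorem~9]{BLOP09}, and that the phrase ``occurring in the algorithm'' in Theorem~\ref{thm:RG} covers the polynomials that constitute the output. I would also note that the hypothesis that $k$ be computable is needed only so that the Rosenfeld-Gr\"obner algorithm can literally be executed; the bound itself comes from Theorem~\ref{thm:RG}, which holds for an arbitrary differential field.
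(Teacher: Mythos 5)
The gap is in your identification of the output of the Rosenfeld--Gr\"obner algorithm with the characteristic sets required by the statement. According to the paper's own proof (and \cite[Definition~8 and p.~108]{BLOP09}), what the Rosenfeld--Gr\"obner algorithm returns is a family of \emph{characteristic presentations} $C_i$ of the ideals $\langle C_i\rangle^{(\infty)} \colon H_{C_i}^\infty$, and these are \emph{not} in general autoreduced: one must still perform differential reductions on each $C_i$ until it becomes autoreduced in order to obtain an actual characteristic set. (The prose in Section~5.1 of the paper, which you quote in effect, does say ``autoreduced sets'' --- but the proof of the corollary explicitly corrects this and is the accurate rendering of \cite{BLOP09}.) The reduction step is not free: pseudo-division can increase degrees, so the orders and degrees of the resulting characteristic set are bounded by the orders and degrees of $C_i$ \emph{composed with} a second computable bound, obtained by applying Lemma~\ref{lem:size_bound} once more to the differential-reduction subroutine (which is itself a total algorithm over $\DCF_m \oplus \RCF$). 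Consequently the paper sets $\operatorname{CharSet}$ to be a composition of $\operatorname{RG}$ with this second bound, rather than $\operatorname{CharSet} = \operatorname{RG}$ outright.

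Concretely: your argument would go through verbatim if the $\mathcal{C}_i$ were already autoreduced characteristic sets, but you have no justification for that beyond the loose phrasing in Section~5.1, and it conflicts with what BLOP09 actually proves. To repair the argument you should (i) note that $C_i$ is only a characteristic presentation, (ii) invoke \cite[p.~108]{BLOP09} that reducing $C_i$ yields a characteristic set of $\langle C_i\rangle^{(\infty)} \colon H_{C_i}^\infty$, (iii) observe that this reduction is itself performable by a total algorithm over $\DCF_m \oplus \RCF$ and so, by Lemma~\ref{lem:size_bound} and Remark~\ref{rem:bitsize}, its output has orders and degrees bounded by a computable function of the orders and degrees of $C_i$, and (iv) define $\operatorname{CharSet}$ as the composition of that function with $\operatorname{RG}$. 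The bound $N \leqslant \operatorname{RG}(m,n,\ell)$ on the number of ideals in the intersection, and your remark about the role of computability of $k$, are correct and coincide with the paper.
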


\begin{proof}
 Theorem~\ref{thm:RG} implies that there exists a representation
 \[
 \sqrt{\langle A\rangle^{(\infty)} \colon S^\infty} = (\langle C_1\rangle^{(\infty)} \colon H_{C_1}) \cap\ldots\cap (\langle C_N\rangle^{(\infty)} \colon H_{C_N}),
 \]
 where $H_{C_i}$ is the product of the initials and separants of $C_i$, and $C_i$ is the characteristic presentation~\cite[Definition~8]{BLOP09} of $\langle C_i\rangle^{(\infty)} \colon H_{C_i}^\infty$ for every $1 \leqslant i \leqslant N$.
 As noted in~\cite[p.~108]{BLOP09} a characteristic set of $\langle C_i\rangle^{(\infty)} \colon H_{C_i}^\infty$ can be obtained from $C_i$ by performing reductions until it will become autoreduced. 
 Since differential reduction is a part of the Rosenfeld-Gr\"obner algorithm, it can also be performed by a total algorithm over $\DCF_m \oplus \RCF$.
 Therefore, as in the proof of Theorem~\ref{thm:RG}, Lemma~\ref{lem:size_bound} implies that $\langle C_i\rangle^{(\infty)} \colon H_{C_i}^\infty$ has a characteristic set with degrees and order bounded by a computable function of the degrees and orders of $C_i$.
 The latter are bounded by a computable function $\operatorname{RG}$ due to Theorem~\ref{thm:RG}.
 Composing these two bounds, we obtain a desired function~$\operatorname{CharSet}(m, n, \ell)$.
\end{proof}

%%%%%%%%%%%%%%%%%%%%%%%%%%%%%%%%%%%%%%%%%%%%%%%%%%%%%%

\begin{lemma}\label{lem:charset_components}
  There exists a computable function $\operatorname{PrimeComp}(m, n)$ such that for every partial differential field $k$ with $m$ derivations, every ranking, and every  characterizable differential ideal $I$ defined by a characteristic set $C \subset \pol_k(m, n, n)$ with respect to this ranking, we have
  \begin{enumerate}
      \item the number of prime components of $I$ does not exceed $\operatorname{PrimeComp}(m, n)$;
      \item every prime component of $I$ has a characteristic set with respect to the ranking with orders and degrees bounded by $\operatorname{PrimeComp}(m, n)$.
  \end{enumerate}
\end{lemma}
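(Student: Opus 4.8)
The plan is to reduce the statement to a purely algebraic-geometric fact about the finitely many derivatives actually occurring (after prolongation) in a characteristic set of bounded order and degree, and then invoke a uniform bound on the number and complexity of irreducible components of an affine variety over an arbitrary field. First I would recall the structure theorem for characterizable differential ideals: if $I = \langle C\rangle^{(\infty)} : H_C^\infty$ with $C$ a characteristic set with respect to a ranking $>$, then $I$ is radical and its prime components correspond to the prime components of the algebraic ideal obtained by a bounded prolongation of $C$ together with the condition $H_C \neq 0$, restricted to the finitely many variables $\theta y_s$ that appear. Concretely, one works in the polynomial ring $\pol_k(m,n,h)$ for a suitably chosen $h$ (computable from $m$, $n$, and the fact that $C \subset \pol_k(m,n,n)$, using that the leaders of a characteristic set of order and degree at most $n$ live among finitely many derivatives): the differential ideal $I$ is determined, via its characteristic set, by a saturation $\langle C^{(\leqslant h)}\rangle : (H_C)^\infty$ of an explicitly bounded polynomial ideal, and its prime components are the saturations at $H_C$ of the prime components of that polynomial ideal.

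Next I would apply a uniform bound for the algebraic case: the number of irreducible components of an affine variety in $\mathbb{A}^D$ defined by polynomials of degree at most $d$ is bounded by a computable function of $D$ and $d$ (e.g.\ via Bézout-type bounds, or by an ultraproduct/compactness argument of the kind used for the main theorem of this paper, or even by appealing to Theorem~\ref{lem:size_bound} applied to a decomposition algorithm over the theory of algebraically closed fields). Here $D = \poldim(m, n, h)$ and $d$ is bounded in terms of the degrees appearing in $C$, all computable from $m$ and $n$. This yields part~(1): the number of prime components of the polynomial ideal, hence of $I$, is at most some $\operatorname{PrimeComp}_0(m,n)$. For part~(2), each prime component $\mathfrak p$ of $I$ is itself a characterizable (indeed prime) differential ideal, and one needs a characteristic set for it of bounded order and degree; I would obtain this by taking a Gröbner basis of bounded degree for the corresponding polynomial prime (again uniform degree bounds for Gröbner bases over any field apply, the Dubé bound being the standard reference), reading off the leaders with respect to the ranking, and performing differential reductions to autoreduce — exactly as in the proof of Corollary~\ref{cor:charset}, the number of reduction steps and the resulting orders and degrees are bounded by a computable function of the input data. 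Taking the maximum of the two bounds gives $\operatorname{PrimeComp}(m,n)$.

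The main obstacle I anticipate is pinning down the bound $h$ on the order of the prolongation that suffices to capture all prime components of $I$ from the polynomial picture — that is, guaranteeing that no prime component of the differential ideal "hides" at a higher order than one reads off from the characteristic set. This is where one must use that $C$ is a \emph{characteristic set} (not merely a coherent autoreduced set): the leaders of $C$ and their derivatives, together with the partial remainders, control $I$ up to saturation at $H_C$, and the relevant order is bounded by the maximal order of a leader in $C$ plus a contribution from the number of derivations $m$ and variables $n$. Once this combinatorial bound on $h$ is established, the rest is a routine composition of the algebraic component bound, the Gröbner degree bound, and the differential-reduction bound, all of which are computable and uniform in the stated parameters.
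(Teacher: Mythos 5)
Your proposal correctly identifies the overall shape of the argument (reduce to the algebraic ideal, bound components there, then obtain bounded characteristic sets for the prime components), but the gap you yourself flag at the end is real and is exactly what the paper supplies. You write that the main obstacle is ``pinning down the bound $h$ on the order of the prolongation that suffices to capture all prime components of $I$ from the polynomial picture,'' and you hope for a combinatorial bound in terms of $m$, $n$, and the leader orders. The paper's proof sidesteps the prolongation question entirely by invoking \cite[Theorem~4]{BLOP09}: for a characterizable ideal $I = \langle C\rangle^{(\infty)} \colon H_C^{\infty}$ with $C \subset \pol_k(m,n,n)$, the prime components of $I$ are in bijection with the prime components of the \emph{unprolonged} algebraic ideal $I \cap R_n$, where $R_n$ is the ring of differential polynomials of order at most $n$. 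That is, $h = n$ suffices and no prolongation is needed; this is a structural fact about coherent autoreduced sets and characterizable ideals, not something one can derive by counting derivatives. Without this theorem your part~(1) does not go through as stated.

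For part~(2) you also take a genuinely different route, and it too has an unjustified step. You propose to take a bounded-degree Gr\"obner basis of the polynomial prime $\mathfrak{p} \cap R_n$, read off leaders, and differentially autoreduce ``exactly as in the proof of Corollary~\ref{cor:charset}.'' But a Gr\"obner basis of the order-$n$ truncation is not obviously a (nor reducible to a) characteristic set of the differential ideal $\mathfrak{p}$, and it is not clear that differential reduction from it terminates with bounded orders and degrees or lands on a lowest-rank coherent autoreduced set. The paper instead uses a saturation trick: by \cite[Proposition~3]{Heintz}, for each other component $P_i$ one can pick a polynomial in $(P_1 \setminus P_i) \cap R_n$ of degree at most $\deg(P_i \cap R_n)$; the product $Q$ has degree at most $D := \deg(I \cap R_n)$ (bounded via B\'ezout), and then $P_1 = I \colon Q^{\infty} = \langle C\rangle^{(\infty)} \colon (H_C Q)^{\infty}$. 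Applying Corollary~\ref{cor:charset} to the pair $(C, H_C Q)$ then yields the bounded characteristic set directly from the Rosenfeld--Gr\"obner bound, with no appeal to Gr\"obner-basis degree bounds or to an ad hoc analysis of differential reduction. So both halves of your argument rest on facts you have not established; the paper's proof closes these gaps with \cite[Theorem~4]{BLOP09} and the saturation-by-$Q$ construction respectively.
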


\begin{proof}
  Let $H$ be the product of the initials and separants of $C$.
  \cite[Theorem~4]{BLOP09}
 implies that the number of prime components of $\langle C\rangle^{(\infty)} \colon H^\infty$ is equal to the number of prime components of the algebraic ideal $(\langle C\rangle^{(\infty)} \colon H^\infty) \cap R_n$, where $R_n$ is the ring of differential polynomials of order at most $n$.
 Since the degrees of elements of $C$ are bounded by $n$, the B\'ezout inequality implies that there is a computable bound $D$ for the degree of  the radical ideal $I \cap R_n$ (defined, e.g., as the degree of the corresponding affine variety \cite[page~246]{Heintz}) in terms of $m$ and $n$, so this gives a bound for the number of components.
 
 Let $P_1, \ldots, P_\ell$ be the prime components of $I$.
 For every $1 \leqslant i \leqslant \ell$, $P_i \cap R_n$ is a prime algebraic ideal, and its zero set can be defined by equations of degree at most $\deg (P_i\cap R_n)$ due to~\cite[Proposition~3]{Heintz}.
 Therefore, for each $2 \leqslant i \leqslant \ell$, we can choose a polynomial in $(P_1 \setminus P_i) \cap R_n$ of degree at most $\deg (P_i \cap R_n)$.
 Their product $Q$ has degree at most $\deg (I\cap R_n) \leqslant D$.
 Observe that 
 \[
   P_1 = P_1 \colon Q^\infty \subset I \colon Q^\infty = (P_1 \colon Q^\infty) \cap \ldots \cap (P_\ell \colon Q^\infty) = P_1.
 \]
 Thus, applying Corollary~\ref{cor:charset} to a pair $(C, HQ)$ and using that $|C| \leqslant \poldim(m, n)$, we show that $P_1$ has a characteristic set with orders and degrees bounded by $\operatorname{CharSet}(m, D + n, \poldim(m, n))$. 
\end{proof}

%%%%%%%%%%%%%%%%%%%%%%%%%%%%%%%%%%%%%

\begin{theorem}[Upper bound for the components of a differential variety and their number]\label{thm:components}
    There exists a computable function $\components(m, n)$  such that, for  {all non-negative integers $m$, $n$ and $h$ and}
a partial differential field $k$ with $m$ derivations and finite set $F \subset \pol_k(m, n, h)$:
    \begin{enumerate}
        \item\label{bound:num_comp} the number of components in the variety defined by $F = 0$ does not exceed $\components(m, \max\{n,h\})$;
        \item\label{bound:char_set} for every differential ranking and every component $X$ of the variety $F = 0$, $X$ has a characteristic set with respect to the ranking with orders and degrees bounded by $\components(m, \max\{n,h\})$.
    \end{enumerate}
\end{theorem}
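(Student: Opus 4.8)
The plan is to reduce the problem to the bound for characterizable ideals already established in Lemma~\ref{lem:charset_components}, combined with the output bound for the Rosenfeld--Gr\"obner algorithm in Theorem~\ref{thm:RG} (equivalently, its reformulation in Corollary~\ref{cor:charset}). First I would normalize the input: given $F \subset \pol_k(m, n, h)$, set $N := \max\{n, h\}$ and observe that $F \subset \pol_k(m, N, N)$ after relabeling, so that every polynomial in $F$ has order and degree at most $N$ in $N$ differential variables. Applying Corollary~\ref{cor:charset} to the pair $(F, \varnothing)$ (so that $S = \varnothing$ and $S^\infty = \{1\}$), with $\ell = |F|$ — which we may assume is at most $N$ by discarding or repeating polynomials harmlessly, or more carefully by noting that $\dim \pol_k(m, N, N) = \poldim(m, N, N)$ bounds the number of linearly independent members needed — we get that
\[
\sqrt{\langle F\rangle^{(\infty)}} = \bigcap_{i=1}^{L} I_i,
\]
where each $I_i$ is a characterizable differential ideal given by a characteristic set $C_i$ with respect to the chosen ranking, and both $L$ and the orders and degrees of the $C_i$ are bounded by $\operatorname{CharSet}(m, N, \poldim(m, N, N))$, a computable function of $m$ and $N$.

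Next I would pass from characterizable ideals to their prime components. Each characteristic set $C_i$ lies in $\pol_k(m, N, N')$ for some computable $N'$ bounded in terms of $m$ and $N$ (coming from the $\operatorname{CharSet}$ bound); after possibly enlarging the number of variables to match, Lemma~\ref{lem:charset_components} applies to each $I_i$ and yields that $I_i$ has at most $\operatorname{PrimeComp}(m, N')$ prime components, each admitting a characteristic set with orders and degrees bounded by $\operatorname{PrimeComp}(m, N')$. Since the irreducible components of the variety $F = 0$ are exactly the minimal primes over $\sqrt{\langle F\rangle^{(\infty)}}$, every such component is a minimal prime over some $I_i$, hence is one of the prime components of one of the $I_i$. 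Therefore the total number of components is at most $L \cdot \operatorname{PrimeComp}(m, N')$, which is a computable function of $m$ and $N = \max\{n,h\}$; this gives part~\eqref{bound:num_comp}. For part~\eqref{bound:char_set}, the characteristic set of each component with respect to the given ranking is, by Lemma~\ref{lem:charset_components}, bounded in order and degree by $\operatorname{PrimeComp}(m, N')$. Setting $\components(m, N)$ to be the maximum of these two computable bounds finishes the argument.

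One point requiring a little care is that the ranking in Theorem~\ref{thm:RG} and in Lemma~\ref{lem:charset_components} must be the \emph{same} ranking throughout, and that a component of $F = 0$ is a component of $\langle C_i\rangle^{(\infty)}\colon H_{C_i}^\infty$ only in the sense of being a minimal prime over it (not necessarily an isolated prime of every $I_j$). This is handled by the standard fact that the minimal primes over an intersection $\bigcap_i I_i$ are precisely the minimal members of the union of the minimal primes over the individual $I_i$, so no component is lost or duplicated beyond the crude product bound. I expect the main obstacle to be purely bookkeeping: tracking how the variable count and the order/degree bound inflate as we compose Theorem~\ref{thm:RG}, Corollary~\ref{cor:charset}, and Lemma~\ref{lem:charset_components}, and checking that each application is legitimate (in particular that the inputs land in the correct $\pol_k$ spaces and that $|C_i| \leqslant \poldim(m, \cdot, \cdot)$ so that Lemma~\ref{lem:charset_components} can be invoked). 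No genuinely new idea beyond the already-proved lemmas should be needed; the content is entirely in the reduction chain.
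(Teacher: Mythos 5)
Your proposal is correct and follows essentially the same route as the paper's proof: replace $F$ by a linear basis so that $|F|$ is bounded by the relevant $\poldim$, apply Corollary~\ref{cor:charset} to decompose $\sqrt{\langle F\rangle^{(\infty)}}$ into boundedly many characterizable ideals with characteristic sets of bounded order and degree, then apply Lemma~\ref{lem:charset_components} to each to control the prime components and their characteristic sets. The only slip is the parenthetical suggestion that $|F|$ could be taken at most $N=\max\{n,h\}$ by discarding polynomials, which is false, but you immediately give the correct bound $\poldim(m,N,N)$ via the linear-basis argument, matching the paper.
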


\begin{proof}
   Consider any differential ranking.
   By replacing $F$ with the basis of its linear span, we will further assume that $|F| \leqslant \poldim(m, n,  h)$ (see Notation~\ref{not:pols}).
   Corollary~\ref{cor:charset} implies that $\sqrt{\langle F \rangle^{(\infty)}}$ can be represented as an intersection of  at most $N$ characterizable ideals with characteristic sets $C_1, \ldots, C_N$ of order and degree at most $N$, where
   \[
   N := \operatorname{CharSet}(m,  \max\{n,h\}, \poldim(m, n, h)).
   \]
   Lemma~\ref{lem:charset_components} applied to each of $C_1, \ldots, C_N$ implies that the number of components of the variety defined by $F = 0$ does not exceed $N\cdot\operatorname{PrimeComp}(m, N)$, and each of them has a characteristic set with orders and degrees not exceeding $\operatorname{PrimeComp}(m, N)$.
\end{proof}

\begin{remark}
  It was shown in~\cite[Theorem~6.1]{HTKM12} that there exists a (not necessarily computable) bound for the degrees and orders a characteristic set of a prime differential ideal. The second part of Theorem~\ref{thm:components} implies that there is a computable bound.
\end{remark}

%%%%%%%%%%%%%%%%%%%%%%%%%%%%%%%
\section{Application to delay PDEs}\label{sec:delayPDE}
In this section, we will show how Theorem~\ref{thm:components} applies to the problem of elimination of unknowns in delay PDEs.

The proof of the main result of this section, Theorem~\ref{thm:main3} (Effective elimination theorem for delay PDEs) inherited from~\cite{ordinary_case} 
had only two missing ingredients closely related to each other: the bound on the number of components of the variety defined by a system of differential algebraic PDEs and bounds on the coefficients of Kolchin polynomials  under projection in the PDE case.
Now that we have obtained 
the former
in Theorem~\ref{thm:components} together with a bound for characteristic sets, it is possible to obtain the latter  in Lemma~\ref{lem:Kolchin_poly_projection} and finish the proof. 
Therefore, Section~\ref{sec:delayPDE} can be thought of as a motivation for the rest of the paper and is an interesting example of a problem from differential-difference algebra that motivated a purely differential algebraic result.

\subsection{Bounds for Kolchin polynomials for algebraic PDEs}

\begin{definition}
Let $K$ be a differentially closed $\Delta$-field containing a $\Delta$-field $k$. We say that $X \subset K^n$ is a {\em $\Delta$-variety} over $k$ if there exists $F \subset k\{y_1,\ldots,y_n\}_\Delta$ such that
\[
X =\{\bm{a}\in K^n\mid \forall\,f \in F\  f(\bm{a})=0\}.
\]
We write $X = \mathbb{V}(F)$. The $\Delta$-variety $K^n$ is denoted by $\mathbb{A}^n$. A $\Delta$-variety $\mathbb{V}(F)$ is called {\em irreducible} if the differential ideal $\sqrt{\langle F\rangle^{(\infty)}}$ is prime.

 For a subset $Y \subset K^n$, the smallest $\Delta$-variety $X \subset K^n$ containing $Y$ is called the {\em Kolchin closure} of $Y$ and denoted by $\overline{Y}^{\operatorname{Kol}}$.
\end{definition}

\begin{definition}
  We will say that a $\Delta$-variety $X \subset \mathbb{A}^n$ is \emph{bounded by $N$} if $N \geqslant \max(n, m)$ ($m=|\Delta|$)  and $X$ can be defined by equations of order and degree at most $N$.
\end{definition}
  
\begin{notation}
 For a numeric polynomial $\omega(t) = \sum\limits_{i = 0}^{m} a_i \binom{t + i}{i}$, we set 
 \[
 |\omega| := \sum\limits_{i = 0}^{m} |a_i|.
 \]
\end{notation}
  
\begin{definition}
The {\em generic point} $(a_1,\ldots,a_n)$ of  an irreducible $\Delta$-variety $X=\mathbb{V}(F)$, where $F\subset k\{\bm{y}\}_\Delta$, is the image of $\bm{y}$ under the homomorphism $K\{\bm{y}\}_\Delta\to  K\{\bm{y}\}_\Delta\big/\sqrt{\langle F\rangle^{(\infty)}}$.
\end{definition}

\begin{definition}
The {\em Kolchin polynomial} of an irreducible $\Delta$-variety $V=\mathbb{V}(F)$, where $F \subset k\{\bm{y}\}_\Delta$, 
 is the unique numerical polynomial $\omega_V(t)$  such that there exists $t_0\geqslant 0$ such that, for all $t \geqslant t_0$ and
 the
 generic point  
$\bm{a}$
 of $V$,  $\omega_V(t)=\trdeg k(\bm{a}_{t})/k$, where $\bm{a}_{t}=\big(\theta(\bm{a}):\theta\in\Theta_\Delta(t)\big)$.
For the proof of the existence, see~\cite[Theorem~5.4.1]{MLPK}.
\end{definition}  
  
\begin{lemma}\label{lem:Kolchin_poly_projection}
   There exists a computable function $\KolchinProj(N)$ such that for every
   \begin{itemize}
       \item differential variety $X \subset \mathbb{A}^n$ bounded by $N$,
       \item irreducible component $X_0 \subset X$,
       \item and linear projection $\pi\colon \mathbb{A}^n \to \mathbb{A}^\ell$,
   \end{itemize} 
   we have $|\omega_{Y}| \leqslant \KolchinProj(N)$, where $Y := \overline{\pi(X_0)}^{\operatorname{Kol}}$.
\end{lemma}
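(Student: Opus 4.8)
The plan is to bound $|\omega_Y|$ in terms of two kinds of data: the number and complexity of the characteristic sets needed to describe the relevant ideals (controlled by Theorem~\ref{thm:components}), and the behaviour of a Kolchin polynomial of an irreducible variety under a single linear projection once a characteristic set is available. First I would replace $X$ by its irreducible component $X_0$; by Theorem~\ref{thm:components}, $X_0$ has a characteristic set $C$, with respect to any fixed ranking, whose orders and degrees are bounded by $\components(m, N)$, and in particular $X_0 = \mathbb{V}(P)$ for a prime differential ideal $P = \langle C\rangle^{(\infty)}\!:\!H_C^\infty$. Then $\omega_{X_0}$ can be read off from $C$: Kolchin's standard formula expresses $\omega_{X_0}(t)$ as a sum over the leaders of $C$ of terms $\binom{t + m - u_i}{m} - \binom{t + m - u_i - (\text{order of }A_i\text{ in its leader's variable})}{m}$ type contributions (see, e.g., \cite[Chapter~II]{Kol} or \cite[Theorem~5.4.1]{MLPK}); since all orders involved are at most $\components(m, N)$, this already gives a computable bound on $|\omega_{X_0}|$ depending only on $m$ and $N$.

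Next I would handle the projection. The image $\pi(X_0)$ need not be Kolchin-closed, but $Y = \overline{\pi(X_0)}^{\operatorname{Kol}}$ is an irreducible $\Delta$-variety (the image of an irreducible variety under a $\Delta$-morphism is irreducible, so its closure is too), and $\omega_Y$ is computed from the generic point: if $\bm{a}$ is a generic point of $X_0$, then $\pi(\bm{a})$ is a generic point of $Y$, so $\omega_Y(t) = \trdeg k(\pi(\bm{a})_t)/k$ for large $t$. Since $\pi$ is linear, $\pi(\bm{a})_t = \big(\theta(\pi(\bm{a})) : \theta \in \Theta_\Delta(t)\big)$ is a $k$-linear function of $\bm{a}_t$, hence $k(\pi(\bm{a})_t) \subseteq k(\bm{a}_t)$ and $\omega_Y(t) \leqslant \omega_{X_0}(t)$ for all large $t$. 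A numeric polynomial that is eventually bounded above by $\omega_{X_0}$ and is itself a Kolchin polynomial (hence of the constrained shape $\sum_{i=0}^m a_i \binom{t+i}{i}$ with the top coefficient $\leqslant m$, coefficients controlled by its values, etc.) has $|\omega_Y|$ bounded by a computable function of $|\omega_{X_0}|$ and $m$: one can recover the coefficients $a_i$ by finite differences from finitely many values $\omega_Y(0), \ldots, \omega_Y(m)$, each of which lies in $[0, \omega_{X_0}(j)]$. Composing the two bounds yields $\KolchinProj(N)$.

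The step I expect to be the main obstacle is the very last one: passing from the pointwise inequality $\omega_Y(t) \leqslant \omega_{X_0}(t)$ (valid only for $t \gg 0$) to a bound on $|\omega_Y|$, i.e.\ on the \emph{sum of absolute values of the coefficients} of $\omega_Y$ in the binomial basis. Individual binomial coefficients $\binom{t+i}{i}$ grow at very different rates, so eventual domination of one polynomial by another does not translate transparently into coefficientwise control, and Kolchin polynomials can have negative coefficients. The way around this is to exploit that $\omega_Y$ is not an arbitrary polynomial but a Kolchin polynomial: its leading term is $a_m\binom{t+m}{m}$ with $a_m = \operatorname{trdeg}$ of the differential function field, which is at most $n$ (in fact at most the number of derivatives of the $y_i$ not among the leaders), and more generally the Macaulay-type constraints on Kolchin polynomials (see \cite{MLPK}) bound all the intermediate coefficients once the leading behaviour and a few values are pinned down. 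Alternatively, and more robustly for a computable bound, I would argue that $Y$ itself, being irreducible and a projection of something of bounded complexity, is a $\Delta$-variety bounded by a computable $N' = N'(N)$ — using Theorem~\ref{thm:components} together with the fact that $\pi(X_0)$ is defined, up to Kolchin closure, by eliminating finitely many of the $y_i$ from a bounded system, an elimination whose output order is controlled by the Rosenfeld--Gr\"obner bound of Theorem~\ref{thm:RG} — and then read $|\omega_Y|$ off a characteristic set of $Y$ exactly as in the first paragraph. This reduces everything to the already-established effective results.
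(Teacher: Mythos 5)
Your first approach --- bound $|\omega_{X_0}|$ from a characteristic set of $X_0$ and then try to transfer that bound through the pointwise inequality $\omega_Y(t)\leqslant\omega_{X_0}(t)$ --- does indeed have the gap you identify: the inequality holds only for large $t$, so you cannot read off $\omega_Y(0),\ldots,\omega_Y(m)$ from generic points, and eventual domination in the binomial basis does not control the lower coefficients (which can be negative). Your fallback is the right idea and is essentially the paper's route, but it leaves implicit exactly the step that makes it work. The paper's proof is short and direct: after a linear change of variables so that $\pi$ is coordinate projection, choose an \emph{elimination ranking} in which every $y_{\ell+i}$ ($i>0$) is ranked above every derivative of $y_1,\ldots,y_\ell$ and whose restriction to $y_1,\ldots,y_\ell$ is orderly. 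Theorem~\ref{thm:components}\eqref{bound:char_set} (applied with this particular ranking) gives a characteristic set $\mathcal{C}$ of $X_0$ with order and degree bounded by a computable function of $N$. The key fact, which you do not state, is that for a prime differential ideal and an elimination ranking, the subcollection of $\mathcal{C}$ consisting of those polynomials involving only $y_1,\ldots,y_\ell$ is already a characteristic set of $P\cap k\{y_1,\ldots,y_\ell\}_\Delta$, i.e.\ of $Y$, with respect to the orderly ranking. With that one observation you obtain a characteristic set of $Y$ of bounded order without passing through the detour of first showing ``$Y$ is bounded by $N'(N)$'' and re-applying Theorem~\ref{thm:components}. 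From there a bound on $|\omega_Y|$ follows from known facts relating a characteristic set's leaders to the Kolchin polynomial (the paper cites~\cite[Proposition~3.1 and Fact~2.1]{Omar}). Your invocation of Theorem~\ref{thm:RG} for an ``elimination'' step is not wrong in spirit --- Rosenfeld--Gr\"obner with an elimination ranking is indeed how one computes such intersections --- but Theorem~\ref{thm:components}\eqref{bound:char_set} is the cleaner tool here since it already delivers the characteristic set of the irreducible component $X_0$ directly, and what you need to add is the elimination-ranking selection property, not another call to a bound on the algorithm's output.
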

  
\begin{proof}
  By performing a linear change of variables, we reduce the problem to the case in which $\pi$ is the projection to the first $\ell$ coordinates.
  Consider a ranking such that 
  \begin{itemize}
      \item $y_{\ell + i}$ is greater than every derivative of $y_{j}$ for every $i > 0$ and $1 \leqslant j \leqslant \ell$;
      \item the restriction of the ranking on $y_1, \ldots, y_\ell$ is an orderly ranking (that is, a ranking such that $\ord\theta_1 > \ord\theta_2 $ implies, for all $i$ and $j$, $\theta_1 y_i > \theta_2 y_j$).
  \end{itemize}
  Theorem~\ref{thm:components} implies that $X_0$ has a characteristic set $\mathcal{C}$ with respect to this ranking with the order bounded by a computable function of $N$.
  Since a characteristic set of $Y$ can be obtained from $\mathcal{C}$ by selecting the polynomials only in the first $\ell$ variables, there is a charactersitic set of $Y$ with respect to the orderly ranking with the order bounded by a computable function of $N$.
  Then \cite[Proposition~3.1]{Omar} and~\cite[Fact~2.1]{Omar} imply that $|\omega_Y|$ is bounded by a computable function of $N$.
\end{proof}
  
\begin{proposition}\label{prop:chain_Kolchin}
  There exists an algorithm that, for every computable function $g(n)\colon \mathbb{Z}_{\geqslant 0} \to \mathbb{Z}_{\geqslant 0}$, produces a number $\Len_g$ such that, for every sequence of Kolchin polynomials
  \[
    \omega_0 > \omega_1 > \ldots > \omega_\ell
  \]
  such that $|\omega_i| < g(i)$ for every $0 \leqslant i \leqslant \ell$, we have $\ell < \Len_g$.
\end{proposition}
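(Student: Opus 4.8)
The plan is to recast the claim as a statement about descending chains in a well-ordered set and then make it effective by a K\"onig's lemma argument. Throughout, $m=|\Delta|$ is fixed (equivalently, the algorithm may take $m$ as an additional input). The key external input is the classical well-ordering theorem for Kolchin polynomials: for a fixed number $m$ of commuting derivations, the set $\mathcal{K}_m$ of all Kolchin polynomials is well-ordered by the ordering $>$ of eventual dominance (Sit; see also \cite{MLPK}). In particular, there is no infinite strictly $>$-decreasing sequence of elements of $\mathcal{K}_m$. I will also use that membership in $\mathcal{K}_m$ is decidable, via Sit's explicit combinatorial description of $\mathcal{K}_m$, and that every element of $\mathcal{K}_m$ has degree at most $m$ (since $\omega_V(t)\leqslant \binom{t+m}{m}$ for $t\gg 0$), so that a Kolchin polynomial is determined by its $m+1$ coefficients in the binomial-coefficient basis.

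First I would show that, for each $i$, the set $P_i:=\{\omega\in\mathcal{K}_m: |\omega|<g(i)\}$ is finite and can be computed: there are only finitely many integer tuples $(a_0,\ldots,a_m)$ with $\sum_j|a_j|<g(i)$, the value $g(i)$ is obtained by running the given program for $g$, and for each candidate tuple one decides whether the corresponding numerical polynomial lies in $\mathcal{K}_m$. (If one prefers to avoid the decidability of $\mathcal{K}_m$, one may replace $\mathcal{K}_m$ everywhere by any fixed decidable well-ordered subset of the numerical polynomials of degree at most $m$ that contains it.)

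Next I would build the rooted tree $\mathcal{T}$ whose nodes of depth $i\geqslant 1$ are the strictly decreasing tuples $\omega_0>\omega_1>\cdots>\omega_{i-1}$ with $\omega_j\in P_j$ for all $j<i$, whose root has depth $0$, and in which the children of $(\omega_0,\ldots,\omega_{i-1})$ are the extensions $(\omega_0,\ldots,\omega_{i-1},\omega_i)$ with $\omega_i\in P_i$ and $\omega_i<\omega_{i-1}$. This tree is finitely branching because each $P_i$ is finite, and it is computable because one can compute the children of any given node. An infinite branch of $\mathcal{T}$ would be an infinite strictly $>$-decreasing sequence in $\mathcal{K}_m$, which is impossible by the well-ordering above; hence $\mathcal{T}$ has no infinite branch, and by K\"onig's lemma $\mathcal{T}$ is finite. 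The algorithm then computes $\mathcal{T}$ level by level and outputs the first depth $d$ at which $\mathcal{T}$ has no nodes; this procedure terminates precisely because $\mathcal{T}$ is known in advance to be finite. Setting $\Len_g:=d$ works: any sequence $\omega_0>\cdots>\omega_\ell$ as in the statement is a node of $\mathcal{T}$ of depth $\ell+1$, and since $\mathcal{T}$ has no node of depth $d$ (hence none of depth $\geqslant d$), we get $\ell+1<d=\Len_g$.

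I expect the only genuine difficulty to be invoking the well-ordering of $\mathcal{K}_m$: for $m=1$ this is elementary and was used in \cite{ordinary_case}, but for $m\geqslant 2$ it is the content of Sit's theorem on differential dimension polynomials. The remaining subtlety is purely about effectivity --- the level-by-level search has no a priori depth bound, so it is essential that finiteness of $\mathcal{T}$ (which follows from the well-ordering together with K\"onig's lemma) is established before the search is run, and that the finite sets $P_i$ of size-bounded Kolchin polynomials can actually be listed, which is why the decidability of membership in $\mathcal{K}_m$ is needed.
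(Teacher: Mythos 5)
Your proof is correct, but it takes a genuinely different route from the paper, and it is worth comparing the two. Both arguments hinge on the same deep fact, namely that the set $\mathcal{K}_m$ of Kolchin polynomials is well-ordered under eventual dominance (Kolchin--Sit). The paper extracts an algorithm from this by invoking the computable order-preserving map $c\colon\mathcal{K}\to(\mathbb{Z}_{\geqslant 0}^{m+1},>_{\mathrm{lex}})$ of \cite[Definition~2.4.9, Lemma~2.4.12]{MLPK}: the given chain $\omega_0>\omega_1>\cdots$ is pushed forward to a $>_{\mathrm{lex}}$-decreasing chain in $\mathbb{Z}_{\geqslant 0}^{m+1}$ whose $i$-th term has norm at most a computable $\widetilde g(i)$, and then the explicit bound of \cite[Main Lemma]{DixonBound} for such lexicographic chains is applied directly. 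Your argument instead builds the finitely-branching tree of all admissible strictly decreasing chains, shows it has no infinite branch (again by well-ordering), concludes via K\"onig's lemma that it is finite, and has the algorithm search the tree level by level for the first empty level. This is a perfectly valid way to turn a non-constructive finiteness statement into a total algorithm, and as you note the subtlety is precisely that termination must be established independently of the search. The trade-off is that the paper's reduction to Dixon's lemma yields an \emph{explicitly} computable bound (a concrete function of $g$), whereas your blind search only yields computability of $g\mapsto\Len_g$ with no a priori control on running time; the paper's approach is therefore both shorter and quantitatively stronger.

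One caveat worth flagging: your algorithm needs to actually \emph{enumerate} the finite sets $P_i=\{\omega\in\mathcal{K}_m : |\omega|<g(i)\}$, which requires decidability of membership in $\mathcal{K}_m$. You correctly anticipate this and point to Sit's combinatorial characterization, which does make membership decidable, so the main route of your argument is sound. However, your stated fallback --- ``replace $\mathcal{K}_m$ by any fixed decidable well-ordered subset of the numerical polynomials of degree at most $m$ that contains it'' --- is not clearly available as a way to avoid this: the set of all integer-coefficient numerical polynomials of degree at most $m$ is not well-ordered under eventual dominance (e.g.\ $t>t-1>t-2>\cdots$), so producing such a decidable well-ordered superset is essentially as hard as using the decidability of $\mathcal{K}_m$ itself. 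In short, the fallback does not save you from needing Sit's theorem; fortunately, Sit's theorem does the job.
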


\begin{proof}
  By replacing $g(n)$ with $n + \max\limits_{0 \leqslant  s \leqslant n} g(s)$, we can further assume that $g(n)$ is increasing and $g(n) \geqslant n$.
  \cite[Definition~2.4.9 and Lemma~2.4.12]{MLPK} define a computable order-preserving map $c$ from the set of all Kolchin polynomials $\mathcal{K}$ to $\mathbb{Z}_{\geqslant 0}^{m + 1}$ (considered with respect to the lexicographic ordering).
  For $v = (v_0, \ldots, v_m) \in \mathbb{Z}_{\geqslant 0}^{m + 1}$, we define $|v| = v_0 + \ldots + v_m$.
  For every function $g\colon \mathbb{Z}_{\geqslant 0} \to \mathbb{Z}_{\geqslant 0}$, we define
  \[
  \widetilde{g}(n) := \max\limits_{\omega \in \mathcal{K},\; |\omega| \leqslant g(n)} |c(\omega)|.
  \]
  Note that if $g(n)$ was computable, then $\widetilde{g}(n)$ is also computable.
  
  The sequence $\omega_0 > \omega_1 > \ldots$ gives rise to a sequence $c(\omega_0) >_{\operatorname{lex}} c(\omega_1) >_{\operatorname{lex}} \ldots$ in $\mathbb{Z}_{\geqslant 0}^{m + 1}$ with $|c(\omega_i)| \leqslant \widetilde{g}(i)$ for every $i$.
  \cite[Main Lemma]{DixonBound} implies that there is an algorithm to compute the maximal length of such a sequence, so there is an algorithm to compute a bound on $\ell$ from $g$.
\end{proof}
\subsection{Trains of varieties, partial solutions, and their upper bounds}\label{sec:trains}

\begin{lemma}\label{lem:embedding}
 For every $\Delta$-$\sigma$-field $k$ of characteristic zero, there exists an extension $k \subset K$ of $\Delta$-$\sigma$-fields, where $K$ is a differentially closed $\Delta$-$\sigma^*$-field.
\end{lemma}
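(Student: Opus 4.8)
The plan is to build the extension in two stages: first adjoin a differential closure while keeping track of $\sigma$, then make $\sigma$ an automorphism by passing to a suitable colimit, and finally iterate to reconcile the two constructions. First I would recall that a $\Delta$-$\sigma$-field is in particular a $\Delta$-field together with a $\Delta$-homomorphism $\sigma\colon k \to k$; the goal is an extension $K \supseteq k$ that is differentially closed as a $\Delta$-field and on which $\sigma$ extends to a $\Delta$-automorphism. The characteristic-zero hypothesis guarantees that differential closures exist (by the theory $\DCF_m$, see Notation~\ref{not:dcf}) and are obtained by a standard union-of-chains argument, so the only real content is the interaction between "differentially closed" and "$\sigma$ is an automorphism."

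The key steps, in order, are as follows. \emph{Step 1: make $\sigma$ injective on a differentially closed field.} Form a differential closure $k_1$ of $k$ as a $\Delta$-field. The $\Delta$-homomorphism $\sigma\colon k \to k \hookrightarrow k_1$ need not extend to $k_1$, but since $\sigma(k)$ generates a $\Delta$-subfield of $k$ isomorphic (as a $\Delta$-field) to $\sigma(k)$, and $k_1$ is a differential closure, the embedding $\sigma(k) \hookrightarrow k$ extends to an embedding of a differential closure of $\sigma(k)$; using uniqueness of differential closures over a common base one arranges a $\Delta$-embedding $\widetilde{\sigma}\colon k_1 \to k_1$ extending $\sigma$. (Here one uses that a differential closure of $k$ is also a differential closure of the $\Delta$-subfield $\sigma(k)^{\Delta\text{-}\mathrm{alg}}$ generated by $\sigma(k)$, together with the embedding-lifting property of differential closures.) \emph{Step 2: iterate to get differential closedness.} Repeating Step 1 gives a chain of $\Delta$-embeddings and $\sigma$-liftings; but to get a single field I instead proceed as in Step 3. \emph{Step 3: invert $\sigma$.} Given $(k_1,\widetilde{\sigma})$ with $\widetilde{\sigma}$ an injective $\Delta$-endomorphism of a differentially closed $\Delta$-field $k_1$, form the direct limit of
\[
  k_1 \xrightarrow{\widetilde{\sigma}} k_1 \xrightarrow{\widetilde{\sigma}} k_1 \xrightarrow{\widetilde{\sigma}} \cdots,
\]
i.e.\ $K_0 := \varinjlim (k_1, \widetilde{\sigma})$, which carries a $\Delta$-automorphism $\sigma_0$ induced by the shift; it is a $\Delta$-$\sigma^*$-field extending $k$. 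However $K_0$ is a direct limit of differentially closed fields, hence differentially closed (the property $\DCF_m$ is preserved under unions of elementary chains — and each $\widetilde{\sigma}$ is an elementary map since $\DCF_m$ admits quantifier elimination, so the transition maps are elementary embeddings). \emph{Step 4: assemble.} Take $K := K_0$; it is differentially closed as a $\Delta$-field and $\sigma_0$ is a $\Delta$-automorphism, so $K$ is a differentially closed $\Delta$-$\sigma^*$-field extending $k$.

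The main obstacle I anticipate is Step 1: extending $\sigma$ to the differential closure. One must be careful that $\widetilde\sigma$ is genuinely a $\Delta$-homomorphism (commuting with all $\partial_i$) and that the differential-closure uniqueness being invoked is the correct relative version — differential closures are unique over a given $\Delta$-field but one needs uniqueness of the differential closure of $k_1$ viewed over the image $\widetilde\sigma$-structure, which is where the argument is delicate. An alternative, perhaps cleaner, route that avoids this: work model-theoretically with the theory of $\Delta$-$\sigma$-fields and use a compactness/saturation argument to embed $k$ into a model of $\DCF_m$ equipped with an automorphism, i.e.\ build a saturated enough $\Delta$-$\sigma^*$-field extension directly; but since the companion theory of differential-difference fields is more subtle, the explicit colimit construction above is the safer plan.
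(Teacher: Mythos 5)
Your proof is correct, but it takes a genuinely different route from the paper's. The paper performs the two constructions in the opposite order and with different tools: it first makes $\sigma$ surjective by a purely algebraic inverse-limit-style chain $k_0\subset k_1\subset\cdots$ with $\sigma(k_{i+1})=k_i$ (following Levin's proof of Prop.~2.1.7), transfers the $\Delta$-structure along the isomorphisms $\varphi_i$, and sets $K_0=\bigcup k_i$; it then cites \cite[Cor.~2.4]{LS2016} as a black box to embed this $\Delta$-$\sigma^\ast$-field into a differentially closed one. You instead first pass to a differential closure $k_1$ of $k$ and extend $\sigma$ to a $\Delta$-endomorphism $\widetilde\sigma$ of $k_1$ via uniqueness of prime models of $\DCF_m$, and only then invert $\sigma$ by forming $\varinjlim(k_1,\widetilde\sigma)$, using model completeness of $\DCF_m$ to see the colimit is still a model. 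Both approaches work; yours is more self-contained (it avoids the external $\Delta$-$\sigma^\ast$-companion result) at the price of the step you rightly flag as delicate. One small correction to that step: your parenthetical claim that ``a differential closure of $k$ is also a differential closure of the $\Delta$-subfield generated by $\sigma(k)$'' is not right in general, since $k$ need not be differentially algebraic over $\sigma(k)$. What you actually want is the following standard argument: inside $k_1$ choose a differential closure $L$ of $\sigma(k)$; since $\sigma\colon k\to\sigma(k)$ is an isomorphism of $\Delta$-fields, $\omega$-stability of $\DCF_m$ (uniqueness of prime models over a set, transported along $\sigma$) yields an extension of $\sigma$ to a $\Delta$-isomorphism $k_1\to L\subseteq k_1$, which is your $\widetilde\sigma$. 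With that fix the argument goes through.
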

\begin{proof}
The proof follows \cite[Lemma~6.1]{ordinary_case} mutatis mutandis  as follows. We will show that there exists a $\Delta$-$\sigma^\ast$-field $K_0$ containing $k$.
  The proof of~\citep[Proposition~2.1.7]{Levin:difference} implies that one can build an ascending chain of $\sigma$-fields
  \begin{equation}\label{eq:asc_chain}
  k_0 \subset k_1 \subset k_2 \subset \ldots
  \end{equation}
  such that, for every $i \in \mathbb{N}$, there exists an isomorphism $\varphi_i\colon k \to k_i$ of $\sigma$-fields,  $\sigma(k_{i + 1}) = k_i$, and $\varphi_i = \sigma\circ\varphi_{i + 1}$ for every $i \in \mathbb{N}$.
  We transfer the $\Delta$-$\sigma$-structure from $k$ to $k_i$'s via $\varphi_i$'s.
  Then $\varphi_i = \sigma\circ \varphi_{i + 1}$ implies that the restriction of $\Delta$ on $k_{i + 1}$ to $k_i$ coincides with the action of $\Delta$ on $k_i$.
  We set $K_0 := \bigcup\limits_{i \in \mathbb{N}} k_i$.
  Since the action $\Delta$ and $\sigma$ is consistent with the ascending chain~\eqref{eq:asc_chain}, $K_0$ is a $\Delta$-$\sigma$-extension of $k_0 \cong k$.
  It is shown in~\cite[Proposition~2.1.7]{Levin:difference} that the action of $\sigma$ on $K_0$ is surjective. 
\cite[Corollary~2.4]{LS2016} implies that $K_0$ can be embedded in a differentially closed $\Delta$-$\sigma^\ast$-field $K$.
\end{proof}

\begin{notation}
 Within Sections~\ref{sec:trains} and~\ref{sec:delayPDEproof}, we fix a ground $\Delta$-$\sigma$ field $k$ and a differentially closed $\Delta$-$\sigma^\ast$-field $K$ given by Lemma~\ref{lem:embedding} applied to $k$.
 All varieties in Sections~\ref{sec:trains} and~\ref{sec:delayPDEproof} are considered over $K$.
\end{notation}

 \begin{definition}[Partial solutions] \label{def-partialsolution}
 \begin{itemize}
 \item[]
  \item
 For $\Delta$-$\sigma$-rings $\mathcal R_1$ and $\mathcal R_2$,
 a   homomorphism $\phi: \mathcal R_1\longrightarrow \mathcal R_2$ is called a $\Delta$-$\sigma$-homomorphism
 if, for all $i$, $\phi \partial_i=\partial_i\phi$ and $\phi \sigma=\sigma\phi$.
  \item Let $\mathcal R$ be a $\Delta$-$\sigma$-ring containing a $\Delta$-$\sigma$-field $k$. Let $k[\bm{y}_\infty]$ be the $\Delta$-$\sigma$-polynomial ring  over $k$ in $\bm{y}= y_1,\ldots,y_r$.
Given a point $\bm{a}=(a_1,\dots,a_r)\in \mathcal{R}^r$, there exists a unique $\Delta$-$\sigma$-homomorphism over $k$, 
\[\phi_{\bm{a}}: k[\bm{y}_\infty]\longrightarrow\mathcal R \quad \text{with}\ \ \phi_{\bm{a}}(y_i)=a_i\  \text{and}\ \phi_{\bm{a}}|_k=\id.\]
 Given $f\in k[\bm{y}_\infty]$, $\bm{a}$ is called  a solution of $f$ in $\mathcal R$ if $f\in\Ker(\phi_{\bm{a}})$.
\item  For a $\Delta$-$\sigma$-$k$-algebra $\mathcal{R}$ and $I= \mathbb{N}$ or  $\mathbb{Z}$, the sequence ring $\mathcal{R}^I$  has  the following structure of a $\Delta$-$\sigma$-ring ($\Delta$-$\sigma^\ast$-ring for $I=\mathbb{Z}$) with $\sigma$ and $\Delta$ defined by 
 \[\sigma\big((x_i)_{i\in I}\big):=(x_{i+1})_{i\in I}   \quad\text{and}\quad \partial_j\big((x_i)_{i\in I}\big):=(\partial_j(x_{i}))_{i\in I}.\] 
 For a $k$-$\Delta$-$\sigma$-algebra $\mathcal{R}$, $\mathcal{R}^I$ can be considered a $k$-$\Delta$-$\sigma$-algebra 
  by embedding $k$ into $\mathcal{R}^I$  in  the following way: 
  \[a\mapsto (\sigma^i(a))_{i\in I},\ \  a \in k.\]
For $f\in k[\bm{y}_\infty]$, a solution of $f$ with components in $\mathcal{R}^I$ is called a {\em sequence solution of $f$ in $\mathcal{R}$.} 
\item
 Given $f\in\mathcal{R}[\bm{y}_\infty]$, the order of $f$ is defined to be the maximal $\ord\theta+j$ such that $\theta\sigma^jy_s$ effectively appears in $f$ for some $s$, denoted by $\ord(f)$.
\item
 The relative order  of $f$ with respect to $\Delta$ (resp. $\sigma$), denoted by $\ord_\Delta(f)$ (resp. $\ord_\sigma(f)$),  is defined as the maximal $\ord\theta$ (resp. $j$) such that 
 $\theta\sigma^jy_s$ effectively appears in $f$ for some $s$.
 \item
   Let $F=\{f_1,\ldots,f_N\} \subset k[\bm{y}_\infty]$, where $\bm{y}= y_1,\ldots,y_r$, be a set of  $\Delta$-$\sigma$-polynomials.
   Suppose $h=\max\{\ord_\sigma(f) \mid f \in F\}$. 
   A sequence  of tuples $(\overline{a}_1,\ldots,\overline{a}_r)\in K^{\ell+h}\times\cdots\times K^{\ell+h}$ is called a {\em partial solution} of $F$ of length $\ell$ if
   $(\overline{a}_1,\ldots,\overline{a}_r)$ is a $\Delta$-solution of the system in $\bm{y}_{\infty,\ell+h-1}$:
   \[
   \{\sigma^i (F)=0 \;|\; 0\leqslant i\leqslant\ell-1\},
   \]
    where $\bm{y}_{\infty,\ell+h-1}=\{\theta\sigma^iy_s\mid \theta \in \Theta_\Delta;\,0\leqslant i \leqslant \ell+h-1;\,1\leqslant s\leqslant r\}$.
   \end{itemize}
  \end{definition}

  We associate the following geometric data with the above set $F$ of $\Delta$-$\sigma$-polynomials:
  \begin{itemize}
  \item the $\Delta$-variety $X\subset\mathbb A^H$ defined by $f_1=0,\ldots,f_N=0$ regarded as $\Delta$-equations in 
 $k[\bm{y}_{\infty,h}]$
  with $H={r}(h+1)$,
  and 
 \item two 
 projections $\pi_1,\pi_2:\mathbb A^H\longrightarrow \mathbb A^{H-r}$ defined by 
 \begin{align*}\pi_1(a_1,\ldots,\sigma^h(a_1);\ldots; &a_r,\ldots,\sigma^h(a_r))\\&:=(a_1,\sigma(a_1),\ldots,\sigma^{h-1}(a_1);\ldots; a_r,\ldots,\sigma^{h-1}(a_r)),\\
  \pi_2(a_1,\ldots,\sigma^h(a_1);\ldots; &a_r,\ldots,\sigma^h(a_r))\\&:=(\sigma(a_1),\ldots,\sigma^{h}(a_1);\ldots; \sigma(a_r),\ldots,\sigma^{h}(a_r)).
  \end{align*}
  \end{itemize}
  
  Let $\sigma(X)$ denote the $\Delta$-variety in $\mathbb A^H$ defined by $f_1^{\sigma},\ldots,f_N^{\sigma}$, 
  where $f_i^{\sigma}$ is the result by applying $\sigma$ to the coefficients of $f_i$.

 \begin{definition}
 A sequence $p_1,\ldots,p_\ell\in\mathbb A^H$ is a {\em partial solution of the triple} $(X,\pi_1,\pi_2)$  if 
\begin{enumerate}[label=(\arabic*)]
\item  for all $i$, $1\leqslant i \leqslant \ell$, we have $p_i\in\sigma^{i-1}(X)$ and
\item for all $i$, $1\leqslant i<\ell$, we have $\pi_1(p_{i+1})=\pi_2(p_i)$.
\end{enumerate}
 A two-sided infinite sequence with such a property is called a {\em solution of the triple} $(X,\pi_1,\pi_2)$.
 \end{definition}

 \begin{lemma}\label{lem:64}
 For every positive integer $\ell$, $F$ has a partial solution of length $\ell$ if and only if the triple $(X,\pi_1,\pi_2)$ has a partial solution of length $\ell.$
 The system $F$ has a  sequence solution in $K^\mathbb Z$ if and only if the triple $(X,\pi_1,\pi_2)$ has a   solution.
 \end{lemma}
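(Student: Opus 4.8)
The plan is to set up an explicit ``dictionary'' converting a partial solution of $F$ into a partial solution of the triple $(X,\pi_1,\pi_2)$ and back. Write $F = \{f_1,\ldots,f_N\}$ and recall $h = \max\{\ord_\sigma(f)\mid f\in F\}$, $H = r(h+1)$, and that a point of $\mathbb{A}^H$ has coordinates indexed by pairs $(s,j)$ with $1\leqslant s\leqslant r$ and $0\leqslant j\leqslant h$, the coordinate $(s,j)$ playing the role of $\sigma^j y_s$. Given a partial solution $(\overline{a}_1,\ldots,\overline{a}_r)$ of length $\ell$, with $\overline{a}_s = (a_{s,0},\ldots,a_{s,\ell+h-1})$, I would define $p_i\in\mathbb{A}^H$ for $1\leqslant i\leqslant\ell$ by declaring its $(s,j)$-coordinate to be $a_{s,\,i-1+j}$; that is, $p_i$ is the length-$(h+1)$ ``window'' starting at position $i-1$ in each of the $r$ sequences. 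First I would check condition~(2): since $\pi_1$ drops the coordinates with $j=h$ while $\pi_2$ drops those with $j=0$ and shifts, both $\pi_1(p_{i+1})$ and $\pi_2(p_i)$ have $(s,j')$-coordinate equal to $a_{s,\,i+j'}$ for $0\leqslant j'\leqslant h-1$, so $\pi_1(p_{i+1})=\pi_2(p_i)$. Then I would check condition~(1): applying $\sigma^i$ to $f_\nu$ both shifts every $\theta\sigma^j y_s$ to $\theta\sigma^{j+i} y_s$ and applies $\sigma^i$ to the coefficients, so the equation ``$\sigma^i(f_\nu)=0$ evaluated on $(\overline{a}_1,\ldots,\overline{a}_r)$'' reads exactly ``$f_\nu^{\sigma^i}$ vanishes at the window $p_{i+1}$'', i.e.\ $p_{i+1}\in\sigma^i(X)$; letting $i$ run over $0,\ldots,\ell-1$ gives $p_i\in\sigma^{i-1}(X)$ for $1\leqslant i\leqslant\ell$.

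For the converse, given $p_1,\ldots,p_\ell$ I would reconstruct the sequences by reading off the overlapping windows: for $0\leqslant j\leqslant\ell+h-1$, set $a_{s,j}$ to be the $(s,\,j-(i-1))$-coordinate of any $p_i$ with $1\leqslant i\leqslant\ell$ and $i-1\leqslant j\leqslant i-1+h$ (such an $i$ always exists because $0\leqslant j\leqslant\ell+h-1$). The step I expect to require the most care---and the only place condition~(2) is used---is well-definedness: if both $p_i$ and $p_{i+1}$ cover position $j$, then $\pi_1(p_{i+1})=\pi_2(p_i)$ says precisely that the $(s,\,j-i)$-coordinate of $p_{i+1}$ equals the $(s,\,j-(i-1))$-coordinate of $p_i$, so the two candidate values for $a_{s,j}$ coincide. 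Condition~(1) then translates back, via the same identity as above, into $\sigma^i(F)=0$ for $0\leqslant i\leqslant\ell-1$, so $(\overline{a}_1,\ldots,\overline{a}_r)$ is a partial solution of $F$ of length $\ell$.

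For the second assertion I would run the identical dictionary with the index $i$ ranging over all of $\mathbb{Z}$ instead of $\{1,\ldots,\ell\}$, using that $K$ is a $\Delta$-$\sigma^\ast$-field so that $\sigma^{i-1}(X)$ makes sense for negative $i$ as well. The one extra thing to unwind is the $\Delta$-$\sigma$-structure on $K^{\mathbb{Z}}$: for $\bm{a} = (a_1,\ldots,a_r)$ with $a_s = (a_{s,j})_{j\in\mathbb{Z}}$ one has $\phi_{\bm{a}}(\theta\sigma^j y_s) = \big(\theta(a_{s,\,t+j})\big)_{t\in\mathbb{Z}}$ and $\phi_{\bm{a}}$ sends a coefficient $c\in k$ to $(\sigma^t(c))_{t\in\mathbb{Z}}$, so the vanishing of the $t$-th component of $f(\bm{a})$ for every $f\in F$ is exactly the statement that the window $p_{t+1}$ with $(s,j)$-coordinate $a_{s,\,t+j}$ lies in $\sigma^t(X)$, while the compatibility $\pi_1(p_{t+1})=\pi_2(p_t)$ is automatic from the windows overlapping, and conversely a two-sided solution of the triple is glued back into $\bm{a}$ exactly as above. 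Apart from these translations there is no genuine obstacle; the only discipline required is to keep the two roles of $\sigma$ separate---the shift on sequence coordinates versus the operator acting on coefficients and on the $\sigma$-part of the variables $\theta\sigma^j y_s$---and to note that $\ord_\sigma(f)\leqslant h$ guarantees that each equation only involves coordinates $0,\ldots,h$ of a single window.
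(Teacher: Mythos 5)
Your proof is correct and fills in the window/sequence dictionary that the paper's own proof defers to (``As in [ordinary\_case, Lemma~6.5]''): the choice of $p_i$ as the length-$(h{+}1)$ window starting at position $i{-}1$, the observation that evaluating $\sigma^i(f_\nu)$ on the partial solution is the same as evaluating $f_\nu^{\sigma^i}$ at $p_{i+1}$, and the well-definedness of the gluing via $\pi_1(p_{i+1})=\pi_2(p_i)$ are exactly the intended argument, here carried over from the ODE to the PDE setting with the $\Delta$-structure riding along unchanged. The unwinding of the $\Delta$-$\sigma$-structure on $K^{\mathbb{Z}}$ and the embedding $k\hookrightarrow K^{\mathbb{Z}}$ in the two-sided case is likewise the right verification, so the proposal matches the paper's approach.
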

 \begin{proof}
As in \cite[Lemma~6.5]{ordinary_case}.
 \end{proof}

  \begin{definition}
For $\ell\in\mathbb N$ or $+\infty$, a sequence of irreducible $\Delta$-subvarieties $(Y_1,\ldots,Y_\ell)$ in $\mathbb{A}^H$ is said to be {\em a train of length $\ell$} in $X$ if
\begin{enumerate}[label=(\arabic*)]
\item for all $i$, $1\leqslant i\leqslant \ell$, we have $Y_i\subseteq\sigma^{i-1}(X)$ and 
\item for all $i$, $1\leqslant i<\ell$, we have $\overline{\pi_1(Y_{i+1})}^{\kol}=\overline{\pi_2(Y_i)}^{\kol}$.
\end{enumerate}
 \end{definition}
 
  \begin{lemma}\label{lem:66}
 For every train $(Y_1,\ldots,Y_\ell)$ in $X$,  there exists a partial solution $p_1,\ldots,p_\ell$ of $(X,\pi_1,\pi_2)$ such that 
 for all $i$, we have $p_i\in Y_i$. In particular, if there is an infinite train in $X$, then there is a solution of the triple  $(X,\pi_1,\pi_2)$.
 \end{lemma}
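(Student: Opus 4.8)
The plan is to realize each $p_i$ as a generic point of $Y_i$ and to glue consecutive choices together using the homogeneity of a large enough differentially closed field. First I would note that we may take the fixed $K$ to be saturated of cardinality $\kappa$ for some $\kappa>|k|+\aleph_0$: being a differentially closed $\Delta$-$\sigma^\ast$-field is expressed by a first-order theory in the language of differential rings with a distinguished automorphism, so $K$ has an elementary extension with this saturation that is again such a field, and the fixed setup only uses that $k$ embeds into $K$. All genericity statements below are taken in the reduct $\DCF_m$, in which $K$ is likewise saturated and strongly homogeneous. Fix a differential subfield $k_0\subseteq K$ over which all the $Y_i$ are defined, finitely generated over $k$ when $\ell<\infty$ and countably generated otherwise; note $\sigma^{i-1}(X)$ is already defined over $k$.

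For the first assertion I would induct on $i$, keeping the extra invariant that $p_i$ is a generic point of $Y_i$ over $k_0$. For $i=1$ take any realization in $K$ of the generic type of $Y_1$ over $k_0$ (it exists by saturation and irreducibility). For the step: since $p_i$ is generic in $Y_i$ over $k_0$ and $\pi_2$ is a morphism of $\Delta$-varieties over $k_0$, the point $\pi_2(p_i)$ is a generic point of $\overline{\pi_2(Y_i)}^{\kol}$ over $k_0$; by the train condition this $\Delta$-variety equals $\overline{\pi_1(Y_{i+1})}^{\kol}$, which is irreducible because $Y_{i+1}$ is. I would then pick $\eta\in K$ realizing the generic type of $Y_{i+1}$ over $k_0$, observe that $\pi_1(\eta)$ is generic in $\overline{\pi_1(Y_{i+1})}^{\kol}$ over $k_0$ and hence has the same complete type over $k_0$ as $\pi_2(p_i)$, and invoke strong homogeneity of $K$ to obtain $\tau\in\operatorname{Aut}(K/k_0)$ with $\tau(\pi_1(\eta))=\pi_2(p_i)$. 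Setting $p_{i+1}:=\tau(\eta)$ gives a point of $Y_{i+1}$, again generic over $k_0$, with $\pi_1(p_{i+1})=\pi_2(p_i)$, as required. For $\ell<\infty$ this is the claim; for $\ell=+\infty$ the same recursion produces a one-sided infinite partial solution.

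For the second assertion I would first show that an infinite train yields, for every finite interval $[a,b]\subseteq\mathbb Z$, a partial solution of $(X,\pi_1,\pi_2)$ indexed by $[a,b]$: apply the first part to the finite sub-train $(Y_1,\dots,Y_{b-a+1})$ to get $q_1,\dots,q_{b-a+1}\in K$ and then set $p_i:=\sigma^{a-1}(q_{i-a+1})$ for $i\in[a,b]$, using that $\sigma$ is an automorphism of $K$ which, applied coordinatewise, commutes with $\pi_1$ and $\pi_2$, so that $p_i\in\sigma^{a-1}(\sigma^{i-a}(X))=\sigma^{i-1}(X)$ and $\pi_1(p_{i+1})=\pi_2(p_i)$. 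Consequently the infinite set of conditions ``$x_i\in\sigma^{i-1}(X)$'' and ``$\pi_1(x_{i+1})=\pi_2(x_i)$'' for $i\in\mathbb Z$, in the variables $(x_i)_{i\in\mathbb Z}$ and with parameters in the countably generated field $k_0$, is finitely satisfiable in $K$; since $K$ is sufficiently saturated it is realized in $K$, and any realization is a solution of the triple.

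The step I expect to be the crux, and would treat most carefully, is this last one: passing from arbitrarily long finite partial solutions to a two-sided infinite one is a genuine compactness phenomenon that the one-sided recursion does not supply by itself, which is exactly why a saturation hypothesis on $K$ is needed, and the shifting trick by powers of $\sigma$ is what makes the asymmetry between ``extending to the right'' and ``extending to the left'' disappear. A secondary point to verify is that irreducibility, equality of Kolchin closures, and the notion of generic point behave as expected for $\Delta$-varieties defined only over the auxiliary field $k_0$ rather than over $k$.
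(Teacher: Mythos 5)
Your proof is correct but follows a genuinely different route from the paper's. The paper argues geometrically inside the Kolchin topology: by induction on $\ell$ it produces a nonempty $\Delta$-open $U\subseteq Y_\ell$ of points extendable to length-$\ell$ partial solutions, using that $\pi_2$ of a dense constructible set is dense and constructible and hence contains a nonempty open subset of $\overline{\pi_1(Y_\ell)}^{\kol}$; this argument runs in the ambient $K$ as given and needs no saturation. You instead take a model-theoretic route: generic types in $\DCF_m$, the fact that projections of generics are generic in the closure of the image, and strong homogeneity to conjugate $\pi_1(\eta)$ onto $\pi_2(p_i)$. Both are valid; yours is perhaps more transparent but has the cost of requiring $K$ to be $|k_0|^+$-saturated, which you purchase by passing to a saturated elementary extension. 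For the finite assertion this is harmless, since ``there exist $p_1,\dots,p_\ell$ with $p_i\in Y_i$ and $\pi_1(p_{i+1})=\pi_2(p_i)$'' is a first-order sentence with parameters in $K$ and therefore transfers back along $K\prec K'$.

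For the ``in particular'' clause, your $\sigma$-shifting plus compactness argument is a clean and essentially necessary ingredient (the paper leaves this step implicit). The one place you should be more explicit is the transfer back to the original $K$: unlike the finite statement, ``the triple has a two-sided solution in $K^{\mathbb Z}$'' is not a single first-order sentence over $K$, so after realizing the type in a saturated $K'\succ K$ you have not automatically produced a solution with coordinates in $K$. The clean fix, which you gesture at, is to observe that the fixed $K$ of the section may from the start be taken $\kappa$-saturated for $\kappa>|k|+\aleph_0$ (Lemma~\ref{lem:embedding} only asserts existence of some such $K$, and nothing downstream depends on a specific choice), and indeed the applications in Corollary~\ref{cor:620} and Theorem~\ref{thm:main3} only need a sequence solution in some $\Delta$-$\sigma^*$-extension. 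Stated as a choice of $K$ rather than as a midstream replacement, this closes the only gap and the argument is complete.
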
 
\begin{proof}
We prove it as in \cite[Lemma~6.7]{ordinary_case}, as follows. To prove the existence of a partial solution of $(X,\pi_1,\pi_2)$ with the desired property,
it suffices to prove the following: 

\begin{claim} \em There exists a nonempty open (in the sense of the Kolchin topology) subset $U\subseteq Y_\ell$  such that for each $p_\ell\in U$, $p_\ell$ 
can
be extended to a partial solution $p_1,\ldots,p_\ell$ of  
 $(X,\pi_1,\pi_2)$ with $p_i\in Y_i$  for every $1 \leqslant i \leqslant \ell$.
 \end{claim}
 
 We will prove the Claim by induction on $\ell$.
 For $\ell=1$, take $U=Y_1$. 
 Since each point in $Y_1$ is a partial solution of $(X,\pi_1,\pi_2)$ of length 1, the Claim holds for $\ell=1$.
 Now suppose we have proved the Claim for $\ell-1$.  
 So there exists a nonempty open subset $U_0\subseteq Y_{\ell-1}$    satisfying the desired property.
 Since $Y_{\ell-1}$ is irreducible, $U_0$ is dense in $Y_{\ell-1}.$
 So, $\pi_2(U_0)$ is dense in $\overline{\pi_2(Y_{\ell-1})}^{\kol}=\overline{\pi_1(Y_{\ell})}^{\kol}$. 
Since $U_0$ is $\Delta$-constructible (that is, solution set of a quantifier-free formula with parameters in $K$  or, equivalently, a finite union of $\Delta$-closed and $\Delta$-open sets), $\pi_2(U_0)$ is $\Delta$-constructible too.
So, $\pi_2(U_0)$ contains a nonempty open subset of $\overline{\pi_1(Y_{\ell})}^{\kol}$.

Since $\pi_1(Y_{\ell})$ is $\Delta$-constructible  and dense in $\overline{\pi_1(Y_{\ell})}^{\kol}$, 
$\pi_2(U_0)\cap\pi_1(Y_{\ell})\neq\varnothing$ is $\Delta$-constructible  and dense in $\overline{\pi_1(Y_{\ell})}^{\kol}$. 
Let $U_1$ be a nonempty open subset of $\overline{\pi_1(Y_{\ell})}^{\kol}$ contained in $\pi_2(U_0)\cap\pi_1(Y_{\ell})$ and \[U_2=\pi_1^{-1}(U_1)\cap Y_{\ell}.\]
Then $U_2$ is a nonempty open subset of  $Y_{\ell}$. 
We will show that for each $p_\ell\in U_2$, there exists $p_{i}\in Y_i$ for $i=1,\ldots,\ell-1$ such that $p_1,\ldots,p_{\ell}$ is a partial solution of  $(X,\pi_1,\pi_2).$ 
 
Since $\pi_1(p_\ell)\in U_1\subset\pi_2(U_0)$, there exists $p_{\ell-1}\in U_0$ such that $\pi_1(p_\ell)=\pi_2(p_{\ell-1}).$ 
Since  $p_{\ell-1}\in U_0$, by the inductive hypothesis, there exists $p_{i}\in Y_i$ for $i=1,\ldots,\ell-1$ such that $p_1,\ldots,p_{\ell-1}$ is a partial solution of  $(X,\pi_1,\pi_2)$ of length $\ell-1$.
So $p_1,\ldots,p_{\ell}$ is a partial solution of  $(X,\pi_1,\pi_2)$ of length $\ell$.
  \end{proof}

For two trains $Y=(Y_1,\ldots,Y_\ell)$ and $Y'=(Y'_1,\ldots,Y'_\ell)$, denote $Y\subseteq Y'$ if $Y_i\subseteq Y'_i$ for each $i$.
 Given an increasing chain of trains $Y_{i}=(Y_{i,1},\ldots,Y_{i,\ell})$,
 \[\big(\overline{\cup_iY_{i,1}}^{\kol},\ldots,\overline{\cup_iY_{i,\ell}}^{\kol}\big)\] is a train in $X$ that is an upper bound for this chain. 
 (For each $j$, $\overline{\cup_iY_{i,j}}^{\kol}$ is an irreducible $\Delta$-variety in $\sigma^{j-1}$(X).)
 So by Zorn's lemma, maximal trains of length $\ell$ always exist in $X$.
 
For $\ell\in\mathbb N$, consider the product 
\[
\textbf{X}_\ell:=X\times\sigma(X)\times\ldots\times \sigma^{\ell-1}(X)
\] 
and denote the projection of $\textbf{X}_\ell$ onto $\sigma^{i-1}(X)$ by $\varphi_{\ell,i}$. 
 Let 
 \begin{equation*}
     \label{eq:W}
\mathbf{W}_\ell(X,\pi_1,\pi_2):=\{p\in \textbf{X}_\ell: \pi_2(\varphi_{\ell,i}(p))=\pi_1(\varphi_{\ell,i+1}(p)), i=1,\ldots,\ell-1\}.
 \end{equation*}
 
 \begin{lemma} \label{lm-traincorrespondence}
 For every irreducible $\Delta$-subvariety $W\subset \mathbf{W}_\ell$, \[\big(\overline{\varphi_{\ell,1}(W)}^{\kol},\ldots,\overline{\varphi_{\ell,\ell}(W)}^{\kol}\big)\] is a train in $X$ of length $\ell$.
 Conversely, for each train $(Y_1,\ldots,Y_\ell)$ in $X$, 
 there exists an irreducible $\Delta$-subvariety $W\subseteq \mathbf{W}_\ell$ such that $Y_i=\overline{\varphi_{\ell,i}(W)}^{\kol}$ for each $i=1,\ldots,\ell$.
 \end{lemma}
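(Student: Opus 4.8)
The plan is to prove the two halves of the correspondence separately: the ``forward'' direction (a subvariety of $\mathbf W_\ell$ produces a train) is essentially formal, while the ``converse'' (a train arises this way) I would prove by induction on $\ell$ using generic points. Throughout I would use three standard facts about the Kolchin topology over $K$: for a $\Delta$-morphism $f$ and an arbitrary set $S$ one has $\overline{f(S)}^{\kol}=\overline{f(\overline S^{\kol})}^{\kol}$ (continuity of $f$); the image of an irreducible $\Delta$-constructible set under a $\Delta$-morphism is irreducible, hence so is its Kolchin closure; and if $\eta$ is a generic point over $K$ of an irreducible $\Delta$-variety $V$, then $f(\eta)$ is a generic point over $K$ of $\overline{f(V)}^{\kol}$. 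I would also use that a dominant $\Delta$-morphism $f\colon V\to Z$ of irreducible $\Delta$-varieties has an irreducible generic fibre (the coordinate ring of $V$ localizes at the generic point of $Z$ to a domain), so that every generic point of $Z$ over $K$ is the image under $f$ of some generic point of $V$ over $K$: realize the relevant type in a sufficiently saturated $\Delta$-field extension of $K$ and then take loci over $K$.

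For the first assertion, fix an irreducible $W\subseteq\mathbf W_\ell$ and set $Y_i:=\overline{\varphi_{\ell,i}(W)}^{\kol}$. Each $Y_i$ is irreducible, and $Y_i\subseteq\sigma^{i-1}(X)$ because $\varphi_{\ell,i}(W)\subseteq\sigma^{i-1}(X)$ and the latter is $\Delta$-closed, giving condition (1). For condition (2), the defining equations of $\mathbf W_\ell$ give $\pi_2(\varphi_{\ell,i}(W))=\pi_1(\varphi_{\ell,i+1}(W))$ as subsets of $\mathbb A^{H-r}$, and applying the first fact twice yields $\overline{\pi_2(Y_i)}^{\kol}=\overline{\pi_2(\varphi_{\ell,i}(W))}^{\kol}=\overline{\pi_1(\varphi_{\ell,i+1}(W))}^{\kol}=\overline{\pi_1(Y_{i+1})}^{\kol}$.

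For the converse I would induct on $\ell$; the case $\ell=1$ is immediate with $W=Y_1$. Given a train $(Y_1,\ldots,Y_\ell)$ in $X$, its truncation $(Y_1,\ldots,Y_{\ell-1})$ is a train of length $\ell-1$ in $X$, so by the inductive hypothesis there is an irreducible $W'\subseteq\mathbf W_{\ell-1}$ with $\overline{\varphi_{\ell-1,i}(W')}^{\kol}=Y_i$ for $1\leqslant i\leqslant\ell-1$. Pick a generic point $(p_1,\ldots,p_{\ell-1})$ of $W'$ over $K$ in a saturated $\Delta$-field extension; then $p_{\ell-1}$ is generic in $Y_{\ell-1}$ over $K$, hence $\pi_2(p_{\ell-1})$ is generic over $K$ in $Z:=\overline{\pi_2(Y_{\ell-1})}^{\kol}=\overline{\pi_1(Y_\ell)}^{\kol}$. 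Since $\pi_1\colon Y_\ell\to Z$ is a dominant morphism of irreducible $\Delta$-varieties, I can choose, in the same extension, a generic point $q$ of $Y_\ell$ over $K$ with $\pi_1(q)=\pi_2(p_{\ell-1})$. Let $W$ be the locus of $(p_1,\ldots,p_{\ell-1},q)$ over $K$; it is irreducible. The tuple satisfies all the defining equations of $\mathbf W_\ell$: the equations of each $\sigma^{i-1}(X)$ because each coordinate lies in the appropriate $Y_i$ (or in $Y_\ell$), the gluing equations for $i\leqslant\ell-2$ because $(p_1,\ldots,p_{\ell-1})\in\mathbf W_{\ell-1}$, and the last gluing equation by the choice of $q$; since $\mathbf W_\ell$ is $\Delta$-closed, $W\subseteq\mathbf W_\ell$. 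Finally $\overline{\varphi_{\ell,i}(W)}^{\kol}$ is the locus of $p_i$ over $K$ for $i\leqslant\ell-1$, which is $Y_i$ by the inductive hypothesis, and for $i=\ell$ it is the locus of $q$ over $K$, which is $Y_\ell$ by construction.

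I expect the only real obstacle to be the selection of the generic point $q$ of $Y_\ell$ with the prescribed projection $\pi_1(q)=\pi_2(p_{\ell-1})$: this is where the irreducibility of the generic fibre of $\pi_1\colon Y_\ell\to Z$ and the need to work inside a sufficiently saturated $\Delta$-field extension of $K$ must be used, so that the isomorphism between the two realizations of the generic point of $Z$ extends to an embedding carrying a generic point of $Y_\ell$ into that extension. Everything else is bookkeeping with Kolchin closures and the elementary properties of morphisms of $\Delta$-varieties listed above.
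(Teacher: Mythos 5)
Your proof is correct, but it takes a different route from the paper's. The paper proves the converse by forming the fiber product of coordinate rings $R_{Y'}\otimes_{R_Z}R_{Y_\ell}$, passing to the fraction fields $E$, $F$, $L$, choosing a prime differential ideal $\mathfrak{p}$ of $E\otimes_L F$, and carefully verifying that the two natural maps $R_{Y'}\to S$ and $R_{Y_\ell}\to S$ remain injective after quotienting by $\mathfrak{p}$; the variety $W$ is then recovered from the resulting domain $S$. You instead argue model-theoretically: take a generic point $(p_1,\ldots,p_{\ell-1})$ of $W'$ in a saturated extension, observe that $\pi_2(p_{\ell-1})$ realizes the generic type of $Z$, lift it to a generic point $q$ of $Y_\ell$ lying over $\pi_2(p_{\ell-1})$ (this is exactly the step that the paper's injectivity computations replace), and then define $W$ as the $K$-locus of the assembled tuple. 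The two arguments are dual presentations of the same idea --- a prime differential ideal of $E\otimes_L F$ versus a realization of the corresponding type in a monster --- and they carry the same technical burden, located at the same spot: the paper must check that $\pi\circ i\circ i_{Y'}$ and $\pi\circ i\circ i_{Y_\ell}$ are injective, while you must justify that the $K$-isomorphism between the two copies of the generic point of $Z$ extends (by saturation) to an embedding that carries a generic point of $Y_\ell$ into the ambient field; you flag this correctly as the one real obstacle. Your version is shorter and closer to the geometric intuition; the paper's version stays inside differential ring theory and avoids invoking saturation beyond what Lemma~\ref{lem:baer_category} already uses elsewhere. Both are sound, and your treatment of the forward direction (continuity plus the gluing equations of $\mathbf{W}_\ell$) supplies the details the paper dismisses as ``straightforward.''
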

\begin{proof} The proof follows  \cite[Lemma~6.8]{ordinary_case}. The first assertion is straightforward. 
  We will prove the second assertion  by induction on $\ell$.
  For $\ell = 1$, $\mathbf{W}_1= X$, and we can set $W = Y_1$.
  
  Let $\ell > 1$. Apply the inductive hypothesis to the train $(Y_1, \ldots, Y_{\ell - 1})$  
  and obtain an irreducible subvariety $Y' \subset \mathbf{W}_{\ell-1} \subset \mathbf{X}_{\ell - 1}$.
  Then there is a natural embedding of $Y' \times Y_\ell$ into $\mathbf{X}_\ell$.
  Denote $(Y' \times Y_\ell) \cap \mathbf{W}_\ell$ by $\widetilde Y$.
  Since $Y'\subset \mathbf{W}_{\ell-1}$,
  \begin{equation*}\label{eq:reprW}
 \widetilde Y = \{ p \in Y' \times Y_\ell \:|\: \pi_2\left( \varphi_{\ell, \ell - 1}(p) \right) = \pi_1\left( \varphi_{\ell, \ell}(p) \right) \}.
 \end{equation*}
Let 
  \begin{equation}\label{eq:reprZ}
  Z := \overline{\pi_2\left( \varphi_{\ell - 1, \ell - 1}(Y') \right)} = \overline{\pi_1(Y_\ell)}.
  \end{equation}
  Then we have a $(k, \Delta)$-isomorphism 
  \[
  R_{Y'}\otimes_{R_Z}R_{Y_\ell} \to R_{\widetilde{Y}}
  \]  
  under the $(k,\Delta)$-algebra homomorphisms   
  $i_1:R_Z \to R_{Y'}$ and $i_2:R_Z \to R_{Y_\ell}$ induced by  $\pi_2\circ \varphi_{\ell - 1, \ell - 1} $ and $\pi_1$, respectively.
  Equality~\eqref{eq:reprZ} implies that $i_1$ and $i_2$ are injective.
 Denote the fields of fractions of $R_{Y'}$, $R_{Y_\ell}$, and $R_Z$ by $E$, $F$, and $L$, respectively.
  Let $\mathfrak{p}$ be any prime 
  differential ideal in $E \otimes_L F$, \[R := (E \otimes_L F) / \mathfrak{p},\]and $\pi\colon E \otimes_L F \to R$ be the canonical homomorphism.
  Consider the natural homomorphism $i \colon R_{Y'}\otimes_{R_Z}R_{Y_\ell} \to E \otimes_L F$.
  Since $1 \in i(R_{Y'}\otimes_{R_Z}R_{Y_\ell})$, the composition $\pi \circ i$ is a nonzero homomorphism.
Since $i_1$ and $i_2$ are injective, the natural homomorphisms $i_{Y'} \colon R_{Y'} \to R_{Y'}\otimes_{R_Z}R_{Y_\ell}$ and $i_{Y_\ell} \colon R_{Y_\ell} \to R_{Y'}\otimes_{R_Z}R_{Y_\ell}$ are injective as well.
  We will show that the compositions \[\pi\circ i \circ i_{Y'} \colon R_{Y'} \to R\quad \text{and}\quad 
  \pi\circ i \circ i_{Y_\ell} \colon R_{Y_\ell} \to R\] are injective.
  Introducing the natural embeddings $i_E \colon E \to E\otimes_L F$ and $j_{Y'} \colon R_{Y'} \to E$, we can rewrite
  \[
  \pi\circ i \circ i_{Y'} = \pi \circ i_E \circ j_{Y'}.
  \]
  The homomorphisms $i_E$ and $j_{Y'}$ are injective.
  The restriction of $\pi$ to $i_E(E)$ is also injective since $E$ is a field.
  Hence, the whole composition $\pi \circ i_E \circ j_{Y'}$ is injective. 
  The argument for $\pi\circ i \circ i_{Y_\ell}$ is analogous. Let 
  \[S :=\big(R_{Y'}\otimes_{R_Z}R_{Y_\ell}\big)\big/ \bigl(\mathfrak{p}\cap \big(R_{Y'}\otimes_{R_Z}R_{Y_\ell}\big)\bigr),\]
  which is a domain, and the  homomorphisms $\pi\circ i \circ i_{Y'}: R_{Y'}\to S$ and $\pi\circ i \circ i_{Y_\ell} : R_{Y_\ell}\to S$ are injective.  Let $F \subset k\{\textbf{W}_\ell\}$ be such that $S = k\{\textbf{W}_\ell\}/\sqrt{\langle F\rangle}^{(\infty)}$. We now let $W$  be the $\Delta$-subvariety of $\textbf{W}_\ell$ defined by $F =0 $.
  For every $i$, $1 \leqslant i < \ell$, the homomorphism \[\varphi_{\ell, i}^\sharp = (\pi\circ i \circ i_{Y'})\circ\varphi_{\ell - 1, i}^\sharp : R_{Y_i}\to R_{Y'}\to S \] is injective as a composition of two injective homomorphisms. Hence,
  the restriction $\varphi_{\ell, i} \colon W \to Y_i$ is dominant.
 \end{proof}

\begin{lemma}\label{lem:numtrainsnumcomponents}
  Let $(X, \pi_1, \pi_2)$ be a triple with $X$ bounded by $n$.
  Then, for every $\ell$, the number of maximal trains of length $\ell$ in $X$ does not exceed $\Components(m,\ell n)$.
\end{lemma}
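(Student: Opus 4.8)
The plan is to bound the number of maximal trains of length $\ell$ in $X$ by the number of irreducible components of the single $\Delta$-variety $\mathbf{W}_\ell := \mathbf{W}_\ell(X, \pi_1, \pi_2)$, and then to bound that number of components using Theorem~\ref{thm:components}.

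For the first step I would use Lemma~\ref{lm-traincorrespondence}. To each irreducible $\Delta$-subvariety $W \subseteq \mathbf{W}_\ell$ it associates the length-$\ell$ train
\[
  \tau(W) := \bigl(\overline{\varphi_{\ell,1}(W)}^{\kol}, \ldots, \overline{\varphi_{\ell,\ell}(W)}^{\kol}\bigr).
\]
I claim that, already when $W$ ranges over the irreducible \emph{components} of $\mathbf{W}_\ell$, the trains $\tau(W)$ exhaust all maximal trains of length $\ell$ in $X$. Indeed, given a maximal train $(Y_1, \ldots, Y_\ell)$, the second assertion of Lemma~\ref{lm-traincorrespondence} yields an irreducible $W' \subseteq \mathbf{W}_\ell$ with $Y_i = \overline{\varphi_{\ell,i}(W')}^{\kol}$ for every $i$. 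Pick an irreducible component $W$ of $\mathbf{W}_\ell$ containing $W'$. Then $\varphi_{\ell,i}(W') \subseteq \varphi_{\ell,i}(W)$, so, taking Kolchin closures, $Y_i \subseteq \overline{\varphi_{\ell,i}(W)}^{\kol}$ for each $i$; since $\tau(W)$ is a train by the first assertion of Lemma~\ref{lm-traincorrespondence}, maximality of $(Y_1, \ldots, Y_\ell)$ forces $(Y_1, \ldots, Y_\ell) = \tau(W)$. Hence the number of maximal trains of length $\ell$ is at most the number of irreducible components of $\mathbf{W}_\ell$.

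For the second step I would bound the number of components of $\mathbf{W}_\ell$. Since $X$ is bounded by $n$, it lies in some $\mathbb{A}^H$ with $\max(H, m) \leqslant n$ and is cut out by finitely many differential polynomials of order and degree at most $n$; each $\sigma^{i-1}(X)$ is cut out by the same polynomials after applying $\sigma$ to their coefficients, hence is again of order and degree at most $n$. Consequently $\mathbf{X}_\ell \subseteq \mathbb{A}^{H\ell}$, and $\mathbf{W}_\ell$ is the intersection of $\mathbf{X}_\ell$ with the linear equations $\pi_2(\varphi_{\ell,i}(p)) = \pi_1(\varphi_{\ell,i+1}(p))$ for $1 \leqslant i < \ell$, which have order $0$ and degree $1$. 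Thus $\mathbf{W}_\ell$ is a $\Delta$-variety over $K$ in $H\ell$ variables defined by finitely many differential polynomials of order and degree at most $n$. Theorem~\ref{thm:components} applied with $H\ell$ variables and order/degree bound $n$ then gives at most $\components(m, \max\{H\ell, n\})$ components; since $H \leqslant n$ and $\ell \geqslant 1$, we have $\max\{H\ell, n\} \leqslant \ell n$, and, after enlarging $\components$ to be non-decreasing in its second argument if necessary, this is at most $\components(m, \ell n)$.

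The point requiring the most care is the first step: one has to check that replacing $W'$ by an irreducible component $W$ of $\mathbf{W}_\ell$ can only enlarge the associated train coordinatewise, which, together with maximality of the given train, pins it down as $\tau(W)$ and makes the assignment ``component of $\mathbf{W}_\ell$ $\mapsto$ maximal train'' surjective. The rest is routine bookkeeping: tracking the ambient dimension $H\ell$ and verifying that every defining equation of $\mathbf{W}_\ell$, the matching equations included, stays within order and degree $n$ so that Theorem~\ref{thm:components} applies.
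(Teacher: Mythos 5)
Your proof is correct and follows essentially the same route as the paper: use Lemma~\ref{lm-traincorrespondence} to pass from maximal trains of length $\ell$ to irreducible components of $\mathbf{W}_\ell$, then bound the number of components by Theorem~\ref{thm:components}. The paper states the correspondence more tersely (asserting the two counts are equal), whereas you carefully verify the only direction actually needed, namely that every maximal train arises from a component, so your write-up is if anything a bit more cautious than the original.
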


\begin{proof}
 By Lemma~\ref{lm-traincorrespondence}, the number of maximal trains of length $\ell$ in $X$ is equal to the number of irreducible components of $\textbf{W}_\ell$. By  Theorem~\ref{thm:components}, this number does not exceed $\Components(m,\ell n)$.
\end{proof}

\begin{definition}
  Let $(X, \pi_1, \pi_2)$ be a triple and $\omega(t)$ be a numeric polynomial.
  We define $B(X, \omega) \in \mathbb{Z} \cup \{\infty\}$ as the smallest value that is greater than the length of any train in $X$ with Kolchin polynomials at least~$\omega$.
\end{definition}

\begin{lemma}\label{lem:train_components}
  Let $X$ be a differential variety bounded by $n$ such that $B(X, 0) < \infty$.
  Then $B(X, \omega_X)$ does not exceed the number of components of $X$ plus one.
\end{lemma}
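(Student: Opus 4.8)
The plan is to bound train lengths directly: I will show that any train $(Y_1,\ldots,Y_\ell)$ in $X$ all of whose members satisfy $\omega_{Y_i}\geqslant\omega_X$ has $\ell\leqslant c$, where $c$ is the number of irreducible components of $X$. Since $B(X,\omega_X)$ exceeds the length of every such train (and trivially $B(X,\omega_X)\leqslant B(X,0)<\infty$, as every train with Kolchin polynomials $\geqslant\omega_X\geqslant 0$ is a train with Kolchin polynomials $\geqslant 0$), this gives $B(X,\omega_X)\leqslant c+1$. Write $X=X_1\cup\cdots\cup X_c$ for the decomposition into irreducible components and recall that $\omega_X$ is the maximum of $\omega_{X_1},\ldots,\omega_{X_c}$ with respect to the (total) order on numeric polynomials by eventual domination. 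The first ingredient is that the automorphism $\sigma$ of $K$ carries the component decomposition of $X$ onto that of $\sigma^{j}(X)$ for every $j$ and preserves Kolchin polynomials (these equivariance facts are routine and are already used in the ODE version \cite{ordinary_case}); in particular $\omega_{\sigma^{j}(X)}=\omega_X$ for all $j$.

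With this, I claim that each $Y_i$ is a \emph{whole} component of $\sigma^{i-1}(X)$. Indeed, choose a component $Z\subseteq\sigma^{i-1}(X)$ with $Y_i\subseteq Z$; then $\omega_{Y_i}\leqslant\omega_Z\leqslant\omega_{\sigma^{i-1}(X)}=\omega_X\leqslant\omega_{Y_i}$, so $\omega_{Y_i}=\omega_Z$. Since a strict inclusion of irreducible $\Delta$-varieties strictly decreases the Kolchin polynomial — this follows by truncating to jets of sufficiently large order and using that a surjection of finitely generated domains over a field whose fraction fields have equal transcendence degree is an isomorphism; it is the same phenomenon that makes the chains in Proposition~\ref{prop:chain_Kolchin} finite — we conclude $Y_i=Z$. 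Hence $Y_i=\sigma^{i-1}(X_{c(i)})$ for a well-defined function $c\colon\{1,\ldots,\ell\}\to\{1,\ldots,c\}$, and it suffices to prove that $c$ is injective.

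Suppose $c$ is not injective, say $c(i)=c(i')$ with $1\leqslant i<i'\leqslant\ell$, and put $p:=i'-i\geqslant 1$. I will splice the segment $(Y_i,\ldots,Y_{i'-1})$ periodically to produce an \emph{infinite} train in $X$; each of its initial segments is then a train of arbitrary finite length (with non-negative Kolchin polynomials), contradicting $B(X,0)<\infty$. Set $T_t:=Y_t$ for $1\leqslant t\leqslant i-1$, and for $t\geqslant i$ write $t-i=qp+r$ with $q\geqslant 0$ and $0\leqslant r<p$, and set $T_t:=\sigma^{qp}(Y_{i+r})$. The blocks glue because $\sigma^{p}(Y_i)=\sigma^{p}\sigma^{i-1}(X_{c(i)})=\sigma^{i'-1}(X_{c(i')})=Y_{i'}$. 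That $(T_t)$ is a train is verified using that $\sigma$, acting on coefficients, commutes with $\pi_1,\pi_2$ and with Kolchin closure and sends $\sigma^{j}(X)$ to $\sigma^{j+1}(X)$: the containment $T_t\subseteq\sigma^{t-1}(X)$ is immediate from $Y_{i+r}\subseteq\sigma^{i+r-1}(X)$, and for each pair of consecutive indices the required equality $\overline{\pi_1(T_{t+1})}^{\kol}=\overline{\pi_2(T_t)}^{\kol}$ becomes, after applying a suitable power of $\sigma$, one of the equalities $\overline{\pi_1(Y_{j+1})}^{\kol}=\overline{\pi_2(Y_j)}^{\kol}$ of the original train with $j<i'\leqslant\ell$ — the only cases being a step inside the prefix, a step inside a block, the step from the prefix into the first block, and the step from one block to the next, and in each the relevant $j$ is at most $i'-1$. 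This yields the infinite train and completes the proof.

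The part I expect to be most delicate is not the combinatorial core (once a component repeats along the train, periodicity and hence an infinite train are forced), but the two supporting facts it rests on: the clean $\sigma$-equivariance of ``taking components'', of $\pi_1,\pi_2$, and of Kolchin closure, and the statement that strict inclusion of irreducible $\Delta$-varieties strictly drops the Kolchin polynomial. Both are standard, but they must be stated carefully — in particular over the correct base field — for the argument above to be rigorous.
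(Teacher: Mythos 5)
Your proof is correct and follows essentially the same route as the paper's: each $\sigma^{-(t-1)}(Y_t)$ is forced to be a whole irreducible component of $X$, pigeonhole forces a repetition $\sigma^{-(i-1)}(Y_i)=\sigma^{-(i'-1)}(Y_{i'})$, and the resulting period is spliced into an infinite train, contradicting $B(X,0)<\infty$. The paper's proof is a terse four-line version of this (it states the "each $\sigma^{-(t-1)}(Y_t)$ is a component of $X$" step without the strict-drop-of-Kolchin-polynomial justification and writes the spliced train down without checking the gluing conditions); you have supplied exactly those verifications, and both supporting facts you flag at the end (strict monotonicity of the Kolchin polynomial under proper inclusion of irreducible $\Delta$-varieties, and $\sigma$-equivariance of components, $\pi_1,\pi_2$, and Kolchin closure) are indeed the standard ingredients the paper takes for granted. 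No gap.
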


\begin{proof}
  Denote the number of components in $X$ by $N$ and assume that there is a train $(Y_1, \ldots, Y_{N + 1})$ with the Kolchin polynomial at least $\omega_X$.
  Then each of $Y_1, \sigma^{-1}(Y_2), \ldots, \sigma^{-N}(Y_{N + 1})$ must be a component of $X$, so there exist $1 \leqslant i < j \leqslant N + 1$ such that  $Y_j = \sigma^{j-i}Y_i$.
  Thus, there exists an infinite train
 $(Y_1,\ldots,Y_i,Y_{i+1},\ldots, Y_{j-1}, \sigma^{j - i }(Y_i), \sigma^{j - i}(Y_{i + 1}), \ldots)$ in $X$.
  This contradicts to  $B(X, 0) < \infty$.
\end{proof}

\begin{lemma}\label{lem:next_Kolchin}
  There exists a computable function $\Iter(n, D)$ such that, for every triple $(X, \pi_1, \pi_2)$ such that
  \begin{itemize}
      \item $B(X, 0) < \infty$
      \item $X$ is bounded by $n$
  \end{itemize}
   and every numeric polynomial $\omega_1(t) > 0$, there exists a numeric polynomial $\omega_2(t) \geqslant 0$ such that
  \begin{itemize}
      \item $\omega_2(t) < \omega_1(t)$;
      \item $|\omega_2| \leqslant \Iter(n, B(X, \omega_1))$;
      \item $B(X, \omega_2) \leqslant \Iter(n, B(X, \omega_1))$.
  \end{itemize}
\end{lemma}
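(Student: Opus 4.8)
The plan is to set $\ell := B(X,\omega_1)$ and to produce $\omega_2$ out of the finitely many maximal trains of length $\ell$ in $X$. First one disposes of the degenerate case: if $X$ has no train of length $\ell$ at all (equivalently $B(X,0)\leqslant \ell$), take $\omega_2:=0$; then $\omega_2<\omega_1$ since $\omega_1>0$, $|\omega_2|=0$, and $B(X,\omega_2)=B(X,0)\leqslant\ell$, so all three conditions hold as soon as $\Iter(n,D)\geqslant D$. So from now on assume trains of length $\ell$ exist in $X$.

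\emph{Choice of $\omega_2$ and the bound on $|\omega_2|$.} By Lemma~\ref{lm-traincorrespondence} the maximal trains of length $\ell$ in $X$ correspond bijectively to the irreducible components of $\mathbf{W}_\ell\subseteq\mathbf{X}_\ell=X\times\sigma(X)\times\cdots\times\sigma^{\ell-1}(X)$, with the $i$-th entry of such a train being $\overline{\varphi_{\ell,i}(W)}^{\kol}$ for the corresponding component $W$. Since $X$ is bounded by $n$, the product $\mathbf{X}_\ell$ and the subvariety $\mathbf{W}_\ell$ cut out of it by linear equations are bounded by a computable $N_0=N_0(n,\ell)$; hence by Theorem~\ref{thm:components} there are at most $\Components(m,N_0)$ such components, and since each $\varphi_{\ell,i}$ is a coordinate (hence linear) projection, Lemma~\ref{lem:Kolchin_poly_projection} gives $\big|\omega_{\overline{\varphi_{\ell,i}(W)}^{\kol}}\big|\leqslant\KolchinProj(N_0)$. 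Thus the set $\mathcal{K}_\ell$ of all Kolchin polynomials occurring as entries of maximal trains of length $\ell$ in $X$ is finite with $|\kappa|\leqslant\KolchinProj(N_0)$ for all $\kappa\in\mathcal{K}_\ell$. By the definition of $\ell=B(X,\omega_1)$, every train of length $\ell$ --- in particular every maximal one --- has an entry of Kolchin polynomial $<\omega_1$, so $\mathcal{K}_\ell^{<\omega_1}:=\{\kappa\in\mathcal{K}_\ell:\kappa<\omega_1\}$ is nonempty; as Kolchin polynomials are totally ordered I set $\omega_2:=\max\mathcal{K}_\ell^{<\omega_1}$. Then $\omega_2\geqslant 0$, $\omega_2<\omega_1$, and $|\omega_2|\leqslant\KolchinProj(N_0(n,\ell))$, a computable function of $n$ and $\ell=B(X,\omega_1)$.

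\emph{Bounding $B(X,\omega_2)$.} Let $(Z_1,\ldots,Z_k)$ be any train in $X$ with $\omega_{Z_j}\geqslant\omega_2$ for all $j$; it suffices to bound $k$, and we may assume $k\geqslant\ell$. For each $j$ with $1\leqslant j\leqslant k-\ell+1$, the window $(Z_j,\ldots,Z_{j+\ell-1})$ is a train of length $\ell$; extend it to a maximal train $(Y_1,\ldots,Y_\ell)$ of length $\ell$ with $Z_{j+i-1}\subseteq Y_i$. Then $\omega_{Y_i}\geqslant\omega_{Z_{j+i-1}}\geqslant\omega_2$, and this maximal train has an entry $Y_{i^\ast}$ with $\omega_{Y_{i^\ast}}<\omega_1$, which lies in $\mathcal{K}_\ell^{<\omega_1}$ and is $\geqslant\omega_2$, hence equals $\omega_2$; since $Z_{j+i^\ast-1}\subseteq Y_{i^\ast}$ is irreducible with $\omega_{Z_{j+i^\ast-1}}\geqslant\omega_2=\omega_{Y_{i^\ast}}$, we get $Z_{j+i^\ast-1}=Y_{i^\ast}$. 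So every window of $\ell$ consecutive indices contains a position $j'$ at which $Z_{j'}$ is an entry of some maximal train of length $\ell$. Undoing the $\sigma$-twist, the varieties $\sigma^{-(j'-1)}(Z_{j'})$ obtained this way all lie in one fixed finite set of subvarieties of $\mathbb{A}^H$, depending only on the finitely many maximal trains of length $\ell$ in $X$ and on $\ell$, hence of size bounded by a computable function of $n$ and $\ell$; and if $\sigma^{-(j'_1-1)}(Z_{j'_1})=\sigma^{-(j'_2-1)}(Z_{j'_2})$ for two such positions $j'_1<j'_2$, then $Z_{j'_2}=\sigma^{j'_2-j'_1}(Z_{j'_1})$ and, applying $\sigma^{j'_2-j'_1}$ repeatedly to the segment of the train between positions $j'_1$ and $j'_2$ (the maps $\pi_1,\pi_2$ and Kolchin closure being $\sigma$-equivariant), one glues together an infinite periodic train in $X$, contradicting $B(X,0)<\infty$ --- this is the argument of Lemma~\ref{lem:train_components} carried out on entries rather than on components. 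Therefore the number of such positions $j'$ is bounded computably in $n$ and $\ell$, and since each of them lies within every $\ell$-window in which it occurs they are at most $\ell$ apart, so $k$ is bounded by $\ell$ times that count plus $\ell$. Finally, define $\Iter(n,D)$ to majorize $D$, $\KolchinProj(N_0(n,D))$, and this last bound evaluated at $\ell=D$.

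The main obstacle is the third step: choosing $\omega_2$ precisely as the largest entry-Kolchin-polynomial of a length-$\ell$ maximal train that is still below $\omega_1$, so that the hypothesis $\omega_{Z_j}\geqslant\omega_2$ forces a maximal-train entry (up to $\sigma$-twist) to recur in every block of $\ell$ consecutive positions, and then running the periodicity/no-repeat argument on those positions while bookkeeping the $\sigma$-shifts correctly. The remaining ingredients --- boundedness of $\mathbf{X}_\ell$ and $\mathbf{W}_\ell$, the passage from the train correspondence to a finite list of entries, and the assembly of the computable function $\Iter$ --- are routine given Theorem~\ref{thm:components}, Lemma~\ref{lem:Kolchin_poly_projection}, and Lemma~\ref{lm-traincorrespondence}.
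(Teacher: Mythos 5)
Your proof is correct and follows the same strategy as the paper's: set $\ell = B(X,\omega_1)$, use Lemma~\ref{lm-traincorrespondence} and Theorem~\ref{thm:components} to get a finite set of maximal trains of length $\ell$ (with Kolchin polynomials controlled by Lemma~\ref{lem:Kolchin_poly_projection}), define $\omega_2$ as a maximum over that finite data below $\omega_1$, and then run a pigeonhole-plus-$\sigma$-periodicity argument (the same mechanism as in Lemma~\ref{lem:train_components}) to bound the length of any train with entries $\geqslant\omega_2$. The only divergence from the paper is cosmetic: the paper takes $\omega_2$ to be the maximum of the train-level Kolchin polynomials $\omega_{Y_W}$ that are $<\omega_1$ and pigeonholes on the $T+1$ extended maximal trains (yielding the clean bound $B_1+T$), whereas you take $\omega_2$ to be the maximum over all individual entry Kolchin polynomials that are $<\omega_1$ and pigeonhole on $\sigma$-untwisted entries, giving a somewhat larger but still computable bound of roughly $T\ell^2$.
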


\begin{proof}
  The proof follows~\cite[Lemma~6.20]{ordinary_case}.
  Let $B_1 := B(X, \omega_1)$, and let $T$ be the number of maximal trains of length $B_1$ in  $X$.
  We set $B_2 := B_1 + T$.
  Lemma~\ref{lem:numtrainsnumcomponents} implies that $T$ is bounded by $\Components(m,nB_1)$.
  Consider the fibered product $\mathbf{W}_{B_1}(X, \pi_1, \pi_2)$, and, for each irreducible component $W$ in it, denote the corresponding train by $Y_W$.
  We set (assuming $\max\varnothing = 0$)
  \[
    \omega_2 := \max \bigl\{ \omega_{Y_W} \mid \omega_{Y_W} < \omega_1, W\text{ is a component of }\mathbf{W}_{B_1} \bigr\}.
  \]
  We will show that $B(X, \omega_2) \leqslant B_1 + T$.
  Assume that there is a maximal train $(Y_1, \ldots, Y_{B_2})$ in $X$ with the Kolchin polynomial at least $\omega_2$.
  Introduce $T + 1$ trains $Z^{(1)},\ldots,Z^{(T + 1)}$ of length $B_1$ in $X,\sigma(X),\ldots,\sigma^{T}(X)$, respectively, such that for each $j$,
  \[
    Z^{(j)}=\big( Z^{(j)}_1,\ldots,Z^{(j)}_{T}\big) := (Y_j, \ldots, Y_{j + B_1 - 1}).
  \]
  Then for each $j$, consider a maximal train $\tilde{Z}^{(j)}$ of length $B_1$ containing $Z^{(j)}$.
  So $\sigma^{-j+1}(\tilde{Z}^{(j)})$ is a maximal train of length $B_1$ in $X$.
  There are two cases to consider:
  \begin{equation}
  \tag{Case 1}
  \big\{\omega_{Y_W}(t)\:\big|\:  \omega_{Y_W}(t)<\omega_1(t),\;  W \text{  is a component of } 
 \mathbf{W}_{B_1}\big\}=\varnothing.
  \end{equation}
  In this case, 
  $\tilde{Z}^{(1)}$ is a train in $X$ with  Kolchin polynomial at least $\omega_1$.
  This contradicts the definition of $B(X, \omega_1)$.
  
  \begin{equation}
  \tag{Case 2}
  \big\{\omega_{Y_W}(t)\:\big|\:\omega_{Y_W}(t)<\omega_1(t),\;  W \text{  is a component of }
\mathbf{W}_{B_1}
\big\}\neq\varnothing.
  \end{equation}
  By the definition of $B(X, \omega_1)$, for every $j$, 
  $\omega_{\sigma^{-j+1}(\tilde{Z}^{(j)})}(t) < \omega_1(t)$.
  This implies that, for each $j$, 
  \[
    \omega_{\sigma^{-j+1}(\tilde{Z}^{(j)})}(t)=\omega_{2}(t).
  \]
  Since there are only $T$ maximal trains in  $X$ of length $B_1$,
  there exist $a < b$ such that 
  \[
    \sigma^{-a+1}(\tilde{Z}^{(a)})=\sigma^{-b+1}(\tilde{Z}^{(b)}) =: Z.
  \]
  Since $\omega_{Z} = \omega_{2}$, there exists $\ell$ such that $\omega_{Z_\ell} = \omega_2$.
  Since
  \[
    \omega_{\sigma^{-a+1}({Z}^{(a)}_\ell)} = \omega_{2} \quad \text{ and }\quad \sigma^{-a+1}({Z}^{(a)}_\ell) \subseteq Z_\ell
  \] 
  we have 
$
    \sigma^{-a+1}({Z}^{(a)}_\ell) = Z_\ell
$.
  Similarly, we can show 
 $  \sigma^{-b+1}({Z}^{(b)}_\ell) = Z_\ell$.
 Hence,
 \[
   \sigma^{-a+1}(Y_{a + \ell - 1}) = \sigma^{-a+1}({Z}^{(a)}_\ell) = \sigma^{-b+1}({Z}^{(b)}_\ell) = \sigma^{-b+1}(Y_{b+\ell - 1}).
 \]
  Thus, we have 
 $
    Y_{b+\ell-1}=\sigma^{b-a}(Y_{a+\ell-1})
$.
  This contradicts the fact that $B(X, 0) < \infty$.
  
  It remains to show that $|\omega_2|$ is bounded by a computable function of $n$ and $B_1$.
  Let $W$ be a component of $\mathbf{W}_{B_1}$ such that $\omega_{Y_W} = \omega_2$.
  Let $Y_W = (Y_{W, 1}, \ldots, Y_{W, B_1})$.
  There exists $1 \leqslant i \leqslant B_1$ such that $\omega_{Y_i} = \omega_2$.
  Since $Y_i$ is the Kolchin closure of a linear projection of a component of $\mathbf{W}_{B_1}$ and $\mathbf{W}_{B_1}$ is bounded by $B_1n$, Lemma~\ref{lem:Kolchin_poly_projection} implies that $|\omega_2|$ is bounded by a computable function of $n$ and $B_1$.
  
  Taking $\Iter(n, D)$ to be the maximum of the computable bounds for $B(X, \omega_2)$ and $|\omega_2|$, we conclude the proof.
\end{proof}

%%%%%%%%%%%%%%%%%%%%%%%%%%%%

\begin{definition}
  Let $n$ be a positive integer and $\omega(t)$ be a numeric polynomial such that $\omega > 0$.
  We define $B(n, \omega) \in \mathbb{Z} \cup \{\infty\}$ as the smallest value such that, for every affine differential variety $X$ bounded by $n$, if there exists a train in $X$ with Kolchin polynomial at least $\omega$ of length at least $B(n, \omega)$, then there exists an infinite train in $X$.
\end{definition}

\begin{proposition}\label{prop:BboundA}
  $B(n, 0)$ is bounded by a computable function $A(n)$.
\end{proposition}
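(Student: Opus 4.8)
The plan is to bound $B(X,0)$ by a computable function of $n$ for an arbitrary differential variety $X\subset\mathbb{A}^n$ bounded by $n$ (with the ambient projections $\pi_1,\pi_2$ understood, as throughout this section); this suffices, since for those $X$ that carry an infinite train the property defining $B(n,0)$ is vacuous, while if $X$ has no infinite train then $B(X,0)<\infty$ — this last implication is the only non-formal point of the reduction and follows from Lemma~\ref{lm-traincorrespondence} (which makes the maximal trains of each fixed length finite in number) together with a K\"onig's lemma argument on the resulting finitely branching tree. So from now on assume $B(X,0)<\infty$. I will run a descent on Kolchin polynomials, keeping quantitative control at every stage, and bound the number of stages.

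First I would anchor the descent. By Theorem~\ref{thm:components} the variety $X$ has at most $\Components(m,n)$ irreducible components $X_1,\dots,X_N$ (here $m=|\Delta|\le n$ since $X$ is bounded by $n$), and applying Lemma~\ref{lem:Kolchin_poly_projection} to each $X_i$ with $\pi$ the identity projection gives $|\omega_{X_i}|\le\KolchinProj(n)$. Let $\omega_X$ be the Kolchin polynomial supplied by Lemma~\ref{lem:train_components} (the largest of the $\omega_{X_i}$); then $|\omega_X|\le\KolchinProj(n)$ and, because $B(X,0)<\infty$, Lemma~\ref{lem:train_components} gives $B(X,\omega_X)\le N+1\le\Components(m,n)+1$. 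Put $c_0:=\max\bigl(\Components(m,n)+1,\KolchinProj(n)\bigr)$ and $c_{k+1}:=\Iter(n,c_k)$, replacing $\Iter$ if necessary by a computable monotone majorant of itself so that $c_0\le c_1\le\cdots$; each $c_k$ is computable from $n$.

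Then I would run the descent: set $\omega^{(0)}:=\omega_X$, and while $\omega^{(k)}>0$ apply Lemma~\ref{lem:next_Kolchin} with $\omega_1=\omega^{(k)}$ to get a numeric polynomial $\omega^{(k+1)}\ge 0$ with $\omega^{(k+1)}<\omega^{(k)}$, $|\omega^{(k+1)}|\le\Iter(n,B(X,\omega^{(k)}))$, and $B(X,\omega^{(k+1)})\le\Iter(n,B(X,\omega^{(k)}))$ (the hypothesis $B(X,0)<\infty$ of that lemma is in force). These $\omega^{(k)}$ are Kolchin polynomials: inspecting the proof of Lemma~\ref{lem:next_Kolchin}, each is a finite maximum of Kolchin polynomials of subvarieties, or zero. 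An easy induction from $B(X,\omega^{(0)})=B(X,\omega_X)\le c_0$, using monotonicity of $\Iter$, yields $B(X,\omega^{(k)})\le c_k$ and $|\omega^{(k)}|\le c_k$ for every defined $k$. Thus $\omega^{(0)}>\omega^{(1)}>\cdots$ is a strictly decreasing chain of Kolchin polynomials with $|\omega^{(k)}|<g(k):=c_k+1$, and $g$ is computable. Proposition~\ref{prop:chain_Kolchin} applied to $g$ produces $\Len_g$ bounding the length of any such chain, so the descent must halt at some index $K<\Len_g$; since it can only halt when $\omega^{(K)}$ fails to be positive, $\omega^{(K)}=0$. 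Therefore $B(X,0)=B(X,\omega^{(K)})\le c_K\le c_{\Len_g}$, and we set $A(n):=c_{\Len_g}$. Since $B(X,0)\le A(n)$ for every such $X$, we get $B(n,0)\le A(n)$.

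The conceptual work is all in the cited results — Lemma~\ref{lem:train_components} to start the descent, Lemma~\ref{lem:next_Kolchin} for one quantitatively controlled step, Proposition~\ref{prop:chain_Kolchin} to bound the number of steps — so the substance of this proof is the bookkeeping that keeps everything computable. I expect the one point requiring genuine care to be the verification that the auxiliary function $g$ handed to Proposition~\ref{prop:chain_Kolchin} is itself computable, and that the resulting $A$ depends computably on $n$: this is so because $n\mapsto c_0$ is computable and each further $c_{k+1}=\Iter(n,c_k)$ is one more call to the computable function $\Iter$, so $k\mapsto c_k$ (with $n$ as parameter) is computable, hence so is $g$ as a description of a computable function, and then $\Len_g$ is obtained from $g$ by the algorithm of Proposition~\ref{prop:chain_Kolchin}, giving the computable bound $A(n)=c_{\Len_g(n)}$.
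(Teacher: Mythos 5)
Your proof is correct and follows essentially the same route as the paper's: anchor the descent with Lemma~\ref{lem:train_components}, Theorem~\ref{thm:components}, and Lemma~\ref{lem:Kolchin_poly_projection}, iterate Lemma~\ref{lem:next_Kolchin}, and cut off the chain with Proposition~\ref{prop:chain_Kolchin}; your $c_k$ is the paper's recursively defined $G(k)$ and $A(n)=c_{\Len_g}$ matches $G(\Len_G)$. The only substantive additions you make are to spell out two points the paper treats as immediate — that "no infinite train" forces $B(X,0)<\infty$, and that the $\omega^{(k)}$ produced by Lemma~\ref{lem:next_Kolchin} are indeed Kolchin polynomials — both of which are harmless and arguably worth noting.
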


\begin{proof}
  We recursively define the following function $G(n)$ on nonnegative integers
 \begin{align*}
    &G(0) := \max\bigl(\Components(n,n) + 1, \KolchinProj(n) \bigr),\\
    &G(j + 1) := \Iter(n, G(j)),\ \ j\geqslant 0.
 \end{align*}
  Consider a variety $X$ bounded by $n$ such that there is no infinite train in $X$, that is $B(X, 0) < \infty$.
  Lemma~\ref{lem:train_components} implies that $B(X, \omega_X) - 1$ does not exceed the number of components of $X$.
  Hence, Theorem~\ref{thm:components} implies that $B(X, \omega_X) \leqslant \Components(n,n) + 1$. 
  Lemma~\ref{lem:Kolchin_poly_projection} implies that $|\omega_X| \leqslant \KolchinProj(n)$.
  Repeatedly applying Lemma~\ref{lem:next_Kolchin}, we obtain a sequence of numeric polynomials
  \[
    \omega_0 := \omega_X > \omega_1 > \omega_2 > \ldots 
  \]
  such that, for every $1 \leqslant i \leqslant L$, we have $B(X, \omega_i) \leqslant G(i)$ and $|\omega_i| \leqslant G(i)$.
  Since the Kolchin polynomial are well-ordered, there exists $L$ such that $\omega_L = 0$.
  Proposition~\ref{prop:chain_Kolchin} implies that $L \leqslant \Len_G$.
  Hence, $B(X, 0) \leqslant G(\Len_G)$,  where the right-hand side is a computable function of $n$. 
 Set $A(n):=G(\Len_G)$, then $B(n,0)\leqslant A(n)$.
\end{proof}

 \begin{corollary} \label{cor:620}
For all $r$, $m$ and $s\in\mathbb{Z}_{\geqslant 0}$, and a set of $\Delta$-$\sigma$ polynomials $F \subset k[\bm{y}_{s}]$ with $|\Delta|=m$, $\deg F\leqslant s$ and $|\bm{y}|= r$,
 $F = 0$ has a  sequence solution in 
 $K^\mathbb Z$ 
 if and only if $F = 0$ has a partial solution of computable length
 $A(\max\{r,m,s\})$.
 \end{corollary}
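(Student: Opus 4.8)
The plan is to pass through the geometric reformulation in terms of the triple $(X,\pi_1,\pi_2)$ attached to $F$ and then invoke Proposition~\ref{prop:BboundA}. The forward implication is immediate: if $F=0$ has a sequence solution in $K^{\mathbb Z}$, then by Lemma~\ref{lem:64} the triple $(X,\pi_1,\pi_2)$ has a two-sided solution; truncating it to the window $1,\dots,N$ yields a partial solution of $(X,\pi_1,\pi_2)$ of length $N$ for every $N$, and Lemma~\ref{lem:64} transports this back to a partial solution of $F$ of length $N$. It remains to prove the converse.

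For the converse, suppose $F=0$ has a partial solution of length $N$, where $N$ will be fixed below. By Lemma~\ref{lem:64} the triple $(X,\pi_1,\pi_2)$ has a partial solution $p_1,\dots,p_N$, so $p_i\in\sigma^{i-1}(X)$ and $\pi_1(p_{i+1})=\pi_2(p_i)$ for all relevant $i$. First I would upgrade this sequence of points to a train by taking Kolchin closures: set $Y_i:=\overline{\{p_i\}}^{\kol}$. Each $Y_i$ is irreducible (the Kolchin closure of a point is irreducible, since the ideal of a point is prime) and $Y_i\subseteq\sigma^{i-1}(X)$ because $\sigma^{i-1}(X)$ is Kolchin closed; and since $\pi_1,\pi_2$ are morphisms, hence Kolchin continuous, $\overline{\pi_1(Y_{i+1})}^{\kol}=\overline{\{\pi_1(p_{i+1})\}}^{\kol}=\overline{\{\pi_2(p_i)\}}^{\kol}=\overline{\pi_2(Y_i)}^{\kol}$. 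Thus $(Y_1,\dots,Y_N)$ is a train of length $N$ in $X$; its Kolchin polynomials are automatically $\geqslant 0$, so the corresponding hypothesis in the definition of $B(\cdot,0)$ is vacuous here.

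Next I would estimate the complexity of $X$. With $h:=\max\{\ord_\sigma(f)\mid f\in F\}\leqslant s$, the variety $X$ lies in $\mathbb{A}^H$ with $H=r(h+1)\leqslant r(s+1)$ and is cut out by the $\Delta$-equations $f_1=\dots=f_N=0$ of order and degree at most $s$, so $X$ is bounded by the computable quantity $n(r,m,s):=\max\{r(s+1),\,s,\,m\}$. Take $N:=A(n(r,m,s))$; this is a computable function of $\max\{r,m,s\}$, which (after absorbing the substitution $n(r,m,s)$ and a monotone majorization into $A$) we continue to denote $A(\max\{r,m,s\})$. By Proposition~\ref{prop:BboundA}, $N\geqslant B(n(r,m,s),0)$, so the defining property of $B(\cdot,0)$ yields an infinite train in $X$. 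Finally, Lemma~\ref{lem:66} turns an infinite train in $X$ into a solution of $(X,\pi_1,\pi_2)$, and Lemma~\ref{lem:64} turns that into a sequence solution of $F=0$ in $K^{\mathbb Z}$, as required.

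The mathematical content is carried entirely by Lemmas~\ref{lem:64} and~\ref{lem:66} and by Proposition~\ref{prop:BboundA}; I expect the only delicate point to be the bookkeeping. Concretely, one should double-check that the crude explicit estimate $n(r,m,s)$ on the boundedness of $X$ is correct and that plugging it into the function $A$ of Proposition~\ref{prop:BboundA} indeed produces a computable function of $\max\{r,m,s\}$, as asserted in the statement; and one should verify carefully that the Kolchin-closure construction genuinely produces a train, i.e.\ that closures of points are irreducible and that the relation $\overline{\pi_1(Y_{i+1})}^{\kol}=\overline{\pi_2(Y_i)}^{\kol}$ survives taking closures. Everything else is a direct appeal to the cited results.
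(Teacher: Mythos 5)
Your proof is correct and takes essentially the same route as the paper's (via Lemma~\ref{lem:64}, Lemma~\ref{lem:66}, and Proposition~\ref{prop:BboundA}); your Kolchin-closure construction $Y_i := \overline{\{p_i\}}^{\kol}$ turning a partial solution into a train is precisely the step that the paper's terse ``by Lemmas~\ref{lem:64} and~\ref{lem:66}, $F=0$ has a partial solution of length $D$ if and only if there exists a train of length $D$'' leaves implicit, since Lemma~\ref{lem:66} on its own only gives the reverse implication. Your observation that $X$ is actually bounded by $\max\{r(s+1),m,s\}$ rather than $\max\{r,m,s\}$, so that one should majorize $A$ before writing the bound as $A(\max\{r,m,s\})$, is a legitimate and careful reading of a small notational shortcut in the paper.
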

\begin{proof}
 The proof is as in \cite[Corollary~6.21]{ordinary_case}, as follows.  Let $X\subset\mathbb A^H$ be the $\Delta$-variety defined by $F=0$ regarded as a system of $\Delta$-equations in $\bm{y}, \sigma(\bm{y}),\ldots, \sigma^h(\bm{y})$, where $H=n(h+1)$.
 By Lemmas~\ref{lem:64}  and~\ref{lem:66}, 
 $F = 0$ has a partial solution of length $D$ (resp. $F = 0$ has a solution in $K^\mathbb Z$
 ) if and only if there exists a train of length $D$ in $X$ (resp., there exists an infinite train in $X$).
 By Proposition~\ref{prop:BboundA}, if there exists a train of length $D := A(\max\{r,m,s\})$ in $X$, then there  exists a infinite train in $X$. So the assertion holds.
\end{proof}

%%%%%%%%%%%%%%%%%%%%%%%%%%%%%%%%%%%%%%%%%%%%%%%%%%%%%%%%%%%%%%%%

\subsection{Upper bound for delay PDEs}\label{sec:delayPDEproof}

 We now state and prove 
the main result of this section
which generalizes \cite[Theorem~3.1]{ordinary_case} to delay PDEs. 
\begin{theorem}[Effective elimination for delay PDEs]\label{thm:main3}  For all non-negative integers $r$,  $m$, and $s$, there exists a computable $B=B(r,m,s)$ such that, for all:  \begin{itemize}
\item non-negative integers $q$ and $t$,
   \item $\Delta$-$\sigma$-fields
    $k$ with $\Char k =0$ and $|\Delta|=m$,   
\item sets of $\Delta$-$\sigma$-polynomials $F\subset k[\bm{x}_{t},\bm{y}_{s}]$, where $\bm{x} =x_1,\ldots,x_q$,  $\bm{y}=y_1,\ldots,y_r$, and
$\deg_{\bm{y}}F\leqslant s$, 
   \end{itemize}
   we have 
      \[\big\langle \sigma^i(F)\mid i\in \mathbb{Z}_{\geqslant 0} \big\rangle^{(\infty)}\cap k[\bm{x}_\infty]\ne\{0\} \iff \langle \sigma^i(F)\mid i\in [0,B] \big\rangle^{(B)}\cap k[\bm{x}_{B+t}]\ne\{0\}.\]
 \end{theorem}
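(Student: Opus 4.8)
The plan is to reduce the delay-PDE elimination statement to the purely differential analysis of trains carried out above, exactly as in \cite[Theorem~3.1]{ordinary_case} but using the PDE-adapted lemmas of this section. First I would observe that the left-hand side of the equivalence is a statement about sequence solutions: $\langle \sigma^i(F)\mid i\in\mathbb{Z}_{\geqslant 0}\rangle^{(\infty)}\cap k[\bm{x}_\infty]\ne\{0\}$ fails precisely when the projection onto the $\bm x$-coordinates of the solution set of $F=0$ (as a system of $\Delta$-$\sigma$-equations) is Kolchin-dense in $\mathbb A_{\bm x}^\infty$, i.e.\ when every nonzero $\Delta$-$\sigma$-polynomial in the $\bm x$ alone vanishes on some sequence solution. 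Passing to the differentially closed $\Delta$-$\sigma^\ast$-field $K\supset k$ of Lemma~\ref{lem:embedding}, and treating $\bm x$ as differential unknowns of order $t$ and $\bm y$ of order $s$, I would encode $F=0$ as a triple $(X,\pi_1,\pi_2)$ with $X$ bounded by a computable function of $r$, $m$, $s$ (here $q$ and $t$ enter only through the number of $\bm x$-variables and their order, which affect the ambient dimension but not the boundedness constant in the relevant sense because $\bm x$ appears only linearly in the geometric data). The key point is that non-vanishing of the elimination ideal is equivalent, via Lemma~\ref{lem:64} and Lemma~\ref{lem:66}, to the existence of an \emph{infinite} train in $X$ lying over a proper subvariety in the $\bm x$-coordinates.

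Next I would invoke the central bound: by Proposition~\ref{prop:BboundA}, if $X$ is bounded by $n$ and there is no infinite train in $X$, then every train in $X$ has length less than $A(n)$; and if there \emph{is} an infinite train, then by Corollary~\ref{cor:620} (applied to the appropriate subsystem) a finite partial solution of length $A(\max\{r,m,s\})$ already certifies it. This converts the infinitary condition ``$\langle\sigma^i(F)\mid i\in\mathbb Z_{\geqslant 0}\rangle^{(\infty)}\cap k[\bm x_\infty]\ne\{0\}$'' into the finitary condition ``a bounded partial system has no solution'', which in turn is a finite polynomial consistency question. I would then set $B=B(r,m,s)$ to be a computable function majorizing both $A(\max\{r,m,s\})$ and the order/degree bounds coming from Theorem~\ref{thm:components} and Corollary~\ref{cor:charset} applied to the truncated systems, so that $\langle\sigma^i(F)\mid i\in[0,B]\rangle^{(B)}\cap k[\bm x_{B+t}]$ already ``sees'' any nonzero element of the full elimination ideal.

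The two implications would then be handled separately. The direction $\Leftarrow$ is immediate: $\langle\sigma^i(F)\mid i\in[0,B]\rangle^{(B)}\subseteq\langle\sigma^i(F)\mid i\in\mathbb Z_{\geqslant 0}\rangle^{(\infty)}$ and $k[\bm x_{B+t}]\subseteq k[\bm x_\infty]$, so a nonzero element of the finite ideal intersected with $k[\bm x_{B+t}]$ is a nonzero element of the infinite one intersected with $k[\bm x_\infty]$. For $\Rightarrow$, suppose $g\in\langle\sigma^i(F)\mid i\in\mathbb Z_{\geqslant 0}\rangle^{(\infty)}\cap k[\bm x_\infty]$ is nonzero; equivalently, $g$ vanishes on every sequence solution of $F=0$, so by Lemma~\ref{lem:64} no train in $X$ over the hypersurface $g=0$'s complement is infinite, hence by Proposition~\ref{prop:BboundA} all such trains have bounded length, hence by Corollary~\ref{cor:620} the system $\{\sigma^i(F)=0\mid 0\leqslant i\leqslant B\}$ together with $g\ne 0$ has no partial solution of length $A(\max\{r,m,s\})\leqslant B$, which by effective Nullstellensatz-type bounds (as in the proof of Theorem~\ref{thm:components} and Corollary~\ref{cor:charset}, using the computable degree/order bounds there) forces a nonzero element of $k[\bm x_{B+t}]$ into $\langle\sigma^i(F)\mid i\in[0,B]\rangle^{(B)}$.

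\textbf{Main obstacle.} The hard part will be making the bounds genuinely \emph{uniform in $q$ and $t$}: although the geometric reduction treats $\bm x$ linearly, one must verify that enlarging the ambient space by adding $\bm x$-variables and raising their order does not inflate the constant $A(\cdot)$ or the order/degree bounds of Theorem~\ref{thm:components} beyond a function of $r$, $m$, $s$ alone --- this requires the ranking in the train construction to be chosen (as in Lemma~\ref{lem:Kolchin_poly_projection}) so that the $\bm x$-block is eliminated last and contributes only through projection, together with a careful bookkeeping of how $\sigma^i(F)$ for $i\le B$ captures membership in the full differential-difference ideal. A secondary technical point is translating ``no infinite train over the complement of $g=0$'' back into an actual polynomial identity in $k[\bm x_{B+t}]$ with controlled order $B+t$, which is where the effective characteristic-set bounds from Corollary~\ref{cor:charset} do the work.
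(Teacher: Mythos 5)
Your high-level plan identifies the right supporting results (Lemma~\ref{lem:64}, Lemma~\ref{lem:66}, Proposition~\ref{prop:BboundA}, Corollary~\ref{cor:620}) and gets the easy $\Leftarrow$ direction right, but the $\Rightarrow$ direction has a genuine gap that the paper avoids by a different device.

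The central difficulty you flag yourself --- uniformity in $q$ and $t$ --- is not actually resolved by your plan. You propose to encode $F=0$ as a triple $(X,\pi_1,\pi_2)$ treating $\bm{x}$ and $\bm{y}$ together as differential unknowns and claim $X$ is ``bounded by a computable function of $r$, $m$, $s$'' because ``$\bm{x}$ appears only linearly in the geometric data.'' This is false: the hypotheses only constrain $\deg_{\bm{y}} F\leqslant s$, so $F$ may have arbitrary degree and order in $\bm{x}$, and the ambient dimension of such an $X$ grows with $q$ and $t$ in a way none of the boundedness estimates of Sections~\ref{sec:diffalg}--\ref{sec:trains} control. The paper's proof of Theorem~\ref{thm:main3} sidesteps this entirely: it passes $\bm{x}$ into the coefficient field, observing that $\langle\sigma^i(F)\rangle^{(\infty)}\cap k[\bm{x}_\infty]\ne\{0\}$ is equivalent to $1\in\langle\sigma^i(F)\rangle^{(\infty)}\cdot k(\bm{x}_\infty)[\bm{y}_\infty]$ (a nonzero element of $k[\bm{x}_\infty]$ is a unit in $k(\bm{x}_\infty)$). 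This makes $F$ a system purely in the $\bm{y}$-variables over the $\Delta$-$\sigma$-field $k(\bm{x}_\infty)$, so the geometric data is bounded by $\max\{r,m,s\}$ alone, and Corollary~\ref{cor:620} applies verbatim; $q$ and $t$ disappear.

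Two further issues. First, your argument hinges on ``trains over the complement of $g=0$,'' a relativized notion of train not introduced or developed anywhere in the paper; making it precise and re-proving the chain of lemmas in Section~\ref{sec:trains} for it would be a substantial amount of new work. The paper never needs this: after localization at $\bm{x}$, it reasons by contradiction with the ordinary trains of Section~\ref{sec:trains}. Second, the crucial quantitative input --- the effective differential Nullstellensatz of \cite[Theorem~3.4]{Gustavson} --- does not appear in your proposal. It is this theorem (not Theorem~\ref{thm:components} or Corollary~\ref{cor:charset}, which bound components and characteristic sets but do not yield membership certificates) that converts ``$1\notin\langle\sigma^i(F)\mid i\in[0,A]\rangle^{(B)}$ over $k(\bm{x}_{B+t})$'' into ``$1\notin\langle\sigma^i(F)\mid i\in[0,A]\rangle^{(\infty)}$ over $k(\bm{x}_{B+t})$,'' which in turn gives a partial solution of length $A+1$ by the differential Nullstellensatz and hence a contradiction with Corollary~\ref{cor:620}. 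Without Gustavson's bound, your plan has no way to pass from the truncated ideal to the full differential ideal, and the order bound $B+t$ on the eliminated equation cannot be produced.
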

 \begin{proof} The proof closely follows \cite[Theorem~6.22]{ordinary_case}.
 The ``$\impliedby$'' implication is straightforward. We will prove the ``$\implies$" implication.  For this, let 
 $A :=A(\max\{r, m, s\})$ from Corollary~\ref{cor:620}, and let
 $B$ be a computable bound obtained from  \cite[Theorem~3.4]{Gustavson} with \[m\leftarrow m,
 \ \  
 n\leftarrow r(A + s + 1),\ \ h\leftarrow s,\ \ \text{and}\ \ D \leftarrow s.\]
 By assumption, 
 \begin{equation}\label{eq:assumption}1 \in \big\langle \sigma^i(F)\mid i\in \mathbb{Z}_{\geqslant 0} \big\rangle^{(\infty)}\cdot k(\bm{x}_\infty)[\bm{y}_\infty].
 \end{equation}
  Suppose that 
 \begin{equation}\label{eq:zerointersect}\langle \sigma^i(F)\mid i\in [0, A] \big\rangle^{(B)}\cap k[\bm{x}_{B+t}]=\{0\}.
 \end{equation}
 If  \[1 \in \big\langle \sigma^i(F)\mid i\in [0, A] \big\rangle^{({B})}\cdot k(\bm{x}_{B+t})[\bm{y}_{\infty,A + s}],\] 
 then there would exist $c_{i, j}\in k(\bm{x}_{B+t})[\bm{y}_{\infty,A + s}]$ such that 
 \begin{equation}\label{eq:expr1}
  1 = \sum\limits_{\theta\in\Theta_{\Delta}(B)} \sum\limits_{j = 0}^A \sum\limits_{f\in F} c_{i, j}\theta(\sigma^j(f)).
 \end{equation}
 Multiplying equation~\eqref{eq:expr1} by the common denominator in the variables $\bm{x}_{B+t}$, we obtain a contradiction with~\eqref{eq:zerointersect}.  Hence,
 by \cite[Theorem~3.4]{Gustavson},
 \[1 \notin \big\langle \sigma^i(F)\mid i\in [0, A] \big\rangle^{(\infty)}\cdot k(\bm{x}_{B+t})[\bm{y}_{\infty, A + s}].\] 
  By 
  Lemma~\ref{lem:embedding}, there exists a differentially closed $\Delta$-$\sigma^*$-field extension $L \supset k(\bm{x}_{\infty})\supset k(\bm{x}_{B+t})$. 
 Then differential Nullstellensatz implies that the system of differential equations
 \[
 \{\sigma^i(F)=0\mid i\in [0, A]\}
 \] 
 in the unknowns $\bm{y}_{\infty, A + s}$ has a 
 solution in $L$.  
Then the system $F = 0$ has a partial solution of length $A + 1$ in $L$.  
Now from~\eqref{eq:assumption}, we see that the system $F=0$ has no solutions in $L^{\mathbb{Z}}$.
Together with the existence of a partial solution of length $A + 1$, this  contradicts to Corollary~\ref{cor:620}.
 \end{proof}

\section*{Acknowledgments}

This work was partially supported by the NSF grants  CCF-1564132, CCF-1563942, DMS-1760448, DMS-1760413, DMS-1853650, DMS-1853482, and DMS-1800492;  the NSFC grants (11971029, 11688101) and the fund of Youth Innovation Promotion Association of CAS.
 We are grateful to the referees for numerous insightful comments, which helped us  substantially improve the paper.

\bibliographystyle{abbrvnat}
\bibliography{bibdata}

\begin{thebibliography}{27}
\providecommand{\natexlab}[1]{#1}
\providecommand{\url}[1]{\texttt{#1}}
\expandafter\ifx\csname urlstyle\endcsname\relax
  \providecommand{\doi}[1]{doi: #1}\else
  \providecommand{\doi}{doi: \begingroup \urlstyle{rm}\Url}\fi

\bibitem[B{\"a}chler et~al.(2012)B{\"a}chler, Gerdt, Lange-Hegermann, and
  Robertz]{thomas}
T.~B{\"a}chler, V.~Gerdt, M.~Lange-Hegermann, and D.~Robertz.
\newblock Algorithmic {T}homas decomposition of algebraic and differential
  systems.
\newblock \emph{Journal of Symbolic Computation}, 47\penalty0 (10):\penalty0
  1233--1266, 2012.
\newblock URL \url{https://doi.org/10.1016/j.jsc.2011.12.043}.

\bibitem[Blum et~al.(1998)Blum, Cucker, Shub, and Smale]{BCSS1998}
L.~Blum, F.~Cucker, M.~Shub, and S.~Smale.
\newblock \emph{Complexity and Real Computation}.
\newblock Springer, 1998.
\newblock URL \url{https://doi.org/10.1007/978-1-4612-0701-6}.

\bibitem[Boulier et~al.(2009)Boulier, Lazard, Ollivier, and Petitot]{BLOP09}
F.~Boulier, D.~Lazard, F.~Ollivier, and M.~Petitot.
\newblock Computing representations for radicals of finitely generated
  differential ideals.
\newblock \emph{Applicable Algebra in Engineering, Communication and
  Computing}, 20:\penalty0 73--121, 2009.
\newblock URL \url{https://doi.org/10.1007/s00200-009-0091-7}.

\bibitem[Chapuis and Koiran(1999)]{CK99}
O.~Chapuis and P.~Koiran.
\newblock Saturation and stability in the theory of computation over the reals.
\newblock \emph{Annals of Pure and Applied Logic}, 99\penalty0 (1):\penalty0
  1--49, 1999.
\newblock URL \url{https://doi.org/10.1016/S0168-0072(98)00060-8}.

\bibitem[Gao et~al.(2009{\natexlab{a}})Gao, Luo, and Yuan]{GLY09}
X.-S. Gao, Y.~Luo, and C.~Yuan.
\newblock A characteristic set method for ordinary difference polynomial
  systems.
\newblock \emph{Journal of Symbolic Computation}, 44\penalty0 (3):\penalty0
  242--260, 2009{\natexlab{a}}.
\newblock URL \url{https://doi.org/10.1016/j.jsc.2007.05.005}.

\bibitem[Gao et~al.(2009{\natexlab{b}})Gao, Van~der Hoeven, Yuan, and
  Zhang]{GHYZ2009}
X.-S. Gao, J.~Van~der Hoeven, C.-M. Yuan, and G.-L. Zhang.
\newblock Characteristic set method for differential-difference polynomial
  systems.
\newblock \emph{Journal of Symbolic Computation}, 44\penalty0 (9):\penalty0
  1137--1163, 2009{\natexlab{b}}.
\newblock URL \url{https://doi.org/10.1016/j.jsc.2008.02.010}.

\bibitem[Gerdt and Robertz(2019)]{GR2019}
V.~Gerdt and D.~Robertz.
\newblock Algorithmic approach to strong consistency analysis of finite
  difference approximations to {PDE} systems.
\newblock In \emph{ISSAC '19: Proceedings of the 2019 on International
  Symposium on Symbolic and Algebraic Computation}, pages 163--170. ACM Press,
  2019.
\newblock URL \url{https://doi.org/10.1145/3326229.3326255}.

\bibitem[Golubitsky et~al.(2009)Golubitsky, Kondratieva, Ovchinnikov, and
  Szanto]{GKOS09}
O.~Golubitsky, M.~Kondratieva, A.~Ovchinnikov, and A.~Szanto.
\newblock A bound for orders in differential {N}ullstellensatz.
\newblock \emph{Journal of Algebra}, 322\penalty0 (11):\penalty0 3852--3877,
  2009.
\newblock URL \url{https://doi.org/10.1016/j.jalgebra.2009.05.032}.

\bibitem[Goode(1994)]{Poizat}
J.~B. Goode.
\newblock Accessible telephone directories.
\newblock \emph{The Journal of Symbolic Logic}, 59:\penalty0 92--105, 1994.
\newblock URL \url{https://www.jstor.org/stable/2275252}.

\bibitem[Gustavson et~al.(2016)Gustavson, Kondratieva, and
  Ovchinnikov]{Gustavson}
R.~Gustavson, M.~Kondratieva, and A.~Ovchinnikov.
\newblock New effective differential {N}ullstellensatz.
\newblock \emph{Advances in Mathematics}, 290:\penalty0 1138--1158, 2016.
\newblock URL \url{http://dx.doi.org/10.1016/j.aim.2015.12.021}.

\bibitem[Harrison-Trainor et~al.(2012)Harrison-Trainor, Klys, and
  Moosa]{HTKM12}
M.~Harrison-Trainor, J.~Klys, and R.~Moosa.
\newblock Nonstandard methods for bounds in differential polynomial rings.
\newblock \emph{Journal of Algebra}, 360:\penalty0 71--86, 2012.
\newblock URL \url{https://doi.org/10.1016/j.jalgebra.2012.03.013}.

\bibitem[Heintz(1983)]{Heintz}
J.~Heintz.
\newblock Definability and fast quantifier elimination in algebraically closed
  fields.
\newblock \emph{Theoretical Computer Science}, 24\penalty0 (3):\penalty0
  239--277, 1983.
\newblock URL \url{http://dx.doi.org/10.1016/0304-3975(83)90002-6}.

\bibitem[Kolchin(1973)]{Kol}
E.~Kolchin.
\newblock \emph{Differential Algebra and Algebraic Groups}.
\newblock Academic Press, New York, 1973.

\bibitem[L\'eon~S\'anchez(2016)]{LS2016}
O.~L\'eon~S\'anchez.
\newblock On the model companion of partial differential fields with an
  automorphism.
\newblock \emph{Israel Journal of Mathematics}, 212:\penalty0 419--442, 2016.
\newblock URL \url{https://doi.org/10.1007/s11856-016-1292-y}.

\bibitem[Le{\'o}n~S{\'a}nchez(2019)]{Omar}
O.~Le{\'o}n~S{\'a}nchez.
\newblock Estimates for the coefficients of differential dimension polynomials.
\newblock \emph{Mathematics of Computation}, 88:\penalty0 2959--2985, 2019.
\newblock URL \url{https://doi.org/10.1090/mcom/3429}.

\bibitem[L\'eon~S\'anchez and Moosa(2016)]{MLS2016}
O.~L\'eon~S\'anchez and R.~Moosa.
\newblock The model companion of differential fields with free operators.
\newblock \emph{Journal of Symbolic Logic}, 81\penalty0 (2):\penalty0 493--509,
  2016.
\newblock URL \url{https://doi.org/10.1017/jsl.2015.76}.

\bibitem[Levin(2008)]{Levin:difference}
A.~Levin.
\newblock \emph{Difference algebra}, volume~8 of \emph{Algebra and
  Applications}.
\newblock Springer, New York, 2008.
\newblock URL \url{http://dx.doi.org/10.1007/978-1-4020-6947-5}.

\bibitem[Li et~al.(2018)Li, Ovchinnikov, Pogudin, and Scanlon]{ordinary_case}
W.~Li, A.~Ovchinnikov, G.~Pogudin, and T.~Scanlon.
\newblock Elimination of unknowns for systems of algebraic
  differential-difference equations, 2018.
\newblock URL \url{https://arxiv.org/abs/1812.11390}.

\bibitem[Marker(2002)]{Marker}
D.~Marker.
\newblock \emph{Model theory: An introduction}.
\newblock Springer-Verlag New York, 2002.
\newblock URL \url{https://doi.org/10.1007/b98860}.

\bibitem[McAloon(1984)]{DixonBound}
K.~McAloon.
\newblock Petri nets and large finite sets.
\newblock \emph{Theoretical Computer Science}, 32\penalty0 (1-2):\penalty0
  173--183, 1984.
\newblock URL \url{https://doi.org/10.1016/0304-3975(84)90029-X}.

\bibitem[McGrail(2000)]{mcgrail}
T.~McGrail.
\newblock The model theory of differential fields with finitely many commuting
  derivations.
\newblock \emph{Journal of Symbolic Logic}, 65\penalty0 (2):\penalty0 885--913,
  2000.
\newblock URL \url{https://doi.org/10.2307/2586576}.

\bibitem[Mikhalev et~al.(1999)Mikhalev, Levin, Pankratiev, and
  Kondratieva]{MLPK}
A.~V. Mikhalev, A.~Levin, E.~Pankratiev, and M.~Kondratieva.
\newblock \emph{Differential and Difference Dimension Polynomials}, volume 461
  of \emph{Mathematics and Its Applications}.
\newblock Springer Netherlands, 1999.
\newblock URL \url{https://doi.org/10.1007/978-94-017-1257-6}.

\bibitem[Moosa and Scanlon(2014)]{MS2014}
R.~Moosa and T.~Scanlon.
\newblock Model theory of fields with free operators in characteristic zero.
\newblock \emph{Journal of Mathematical Logic}, 14\penalty0 (2):\penalty0
  1450009, 2014.
\newblock URL \url{https://doi.org/10.1142/s0219061314500093}.

\bibitem[Papadimitriou(1993)]{Papadimitriou}
C.~H. Papadimitriou.
\newblock \emph{Computational Complexity}.
\newblock Pearson, 1993.

\bibitem[Rust and Reid(1997)]{rankings}
C.~J. Rust and G.~J. Reid.
\newblock Rankings of partial derivatives.
\newblock In \emph{Proceedings of the 1997 international symposium on Symbolic
  and algebraic computation - {ISSAC}'97}, 1997.
\newblock URL \url{https://doi.org/10.1145/258726.258737}.

\bibitem[Simmons and Towsner(2019)]{ST19}
W.~Simmons and H.~Towsner.
\newblock Proof mining and effective bounds in differential polynomial rings.
\newblock \emph{Advances in Mathematics}, 343:\penalty0 567--623, 2019.
\newblock URL \url{https://doi.org/10.1016/j.aim.2018.11.026}.

\bibitem[van~den Dries and Shmidt(1984)]{vdDS84}
L.~van~den Dries and K.~Shmidt.
\newblock Bounds in the theory of polynomial rings over fields. a nonstandard
  approach.
\newblock \emph{Inventiones mathematicae}, 76:\penalty0 77--91, 1984.
\newblock URL \url{https://doi.org/10.1007/BF01388493}.

\end{thebibliography}
\end{document}